%% file: main.tex
\documentclass[11pt]{article}

\usepackage[utf8]{inputenc} 
\usepackage[T1]{fontenc}    
\usepackage{url}            
\usepackage{booktabs}       
\usepackage{nicefrac}       
\usepackage{microtype}      
\usepackage{multirow}
\usepackage{array}
\usepackage[noblocks]{authblk}
\usepackage{amssymb,arydshln}
\usepackage[nodayofweek]{datetime}
\usepackage{float}
\usepackage{bm}
\usepackage{multicol} 
\usepackage{tocloft}  
\usepackage{arydshln}
\usepackage{cellspace} 

\usepackage[shortlabels]{enumitem}
\setlist[itemize]{itemindent=0ex,parsep=3pt,itemsep=0pt,leftmargin=\parindent,topsep=5pt,labelwidth=0.8em,labelsep=0.7em}
\setlist[enumerate]{label={\arabic*)},itemindent=0ex,parsep=3pt,itemsep=0pt,leftmargin=\parindent,topsep=5pt,labelwidth=0.9em,labelsep=0.6em}

\usepackage[font=small,labelfont=bf]{caption}
\usepackage{amsmath,amsfonts,nccmath,mathtools,mathrsfs}
\usepackage{tcolorbox}
\usepackage{xcolor}

\usepackage[margin=1in]{geometry}

\usepackage{pgfplots}
\usepgfplotslibrary{fillbetween}
\usetikzlibrary{arrows.meta}
\tikzset{>=latex}

\DeclarePairedDelimiterX\setv[2]{\{}{\}}{#1 \;\delimsize\vert\; #2}

\pgfmathdeclarefunction{cubic}{1}{%
  \pgfmathparse{-2*(#1+2)*(#1+2)*(#1-2)+40}%
}

\pgfmathdeclarefunction{bicuadratic}{1}{%
  \pgfmathparse{(#1-1)*(#1-1)*(#1-1)*(#1-1)+10}%
}

\pgfmathdeclarefunction{cuadratic}{1}{%
    \pgfmathparse{(-(2*#1)^2)+70}%
}

\newcommand{\ct}{\texttt{ct}}%
\newcommand{\dt}{\texttt{dt}}%

\usepackage[numbers,merge,sort&compress]{natbib}
\usepackage{amsthm}
\usepackage{graphicx,color}
\usepackage{subcaption,setspace}

\usepackage[colorlinks = true,
            linkcolor = blue,
            urlcolor  = blue,
            citecolor = blue,
            anchorcolor = blue]{hyperref}

\newtheorem{theorem}{Theorem}
\newtheorem{proposition}{Proposition}
\newtheorem{lemma}{Lemma}
\newtheorem{corollary}{Corollary}

\newtheorem{assumption}{Assumption}

\theoremstyle{definition}
\newtheorem{definition}{Definition}
\newtheorem{example}{Example}

\newtheorem{remark}{Remark}

\usepackage{appendix}
\usepackage[nameinlink]{cleveref}

\crefname{equation}{}{}
\crefname{theorem}{Theorem}{Theorems}
\crefname{corollary}{Corollary}{Corollaries}
\crefname{example}{Example}{Examples}
\crefname{assumption}{Assumption}{Assumptions}
\crefname{lemma}{Lemma}{Lemmas}
\crefname{proposition}{Proposition}{Propositions}
\crefname{figure}{Figure}{Figures}
\crefname{table}{Table}{Tables}
\crefname{fact}{Fact}{Facts}
\crefname{conjecture}{Conjecture}{Conjectures}
\crefname{section}{Section}{Sections}
\crefname{appendix}{Appendix}{Appendices}
\Crefname{equation}{}{}
\Crefname{theorem}{Theorem}{Theorems}
\Crefname{corollary}{Corollary}{Corollaries}
\Crefname{example}{Example}{Examples}
\Crefname{lemma}{Lemma}{Lemma}
\Crefname{proposition}{Proposition}{Proposition}
\Crefname{figure}{Figure}{Figures}
\Crefname{table}{Table}{Tables}
\Crefname{section}{Section}{Sections}
\Crefname{appendix}{Appendix}{Appendices}

\input{commands}

\graphicspath{{figure/}}

\usepackage{tikz}
\usepackage{pgfplots}
\pgfplotsset{compat=newest}

\makeatletter
\newcommand{\removelatexerror}{\let\@latex@error\@gobble}
\makeatother

\title{\bf \Large
Gradient Dominance in the Linear Quadratic Regulator:
A Unified Analysis for Continuous-Time and Discrete-Time Systems
\thanks{This work is supported by NSF CMMI 2320697 and NSF CAREER 2340713. Emails: \texttt{y1watanabe@ucsd.edu; zhengy@ucsd.edu}.}
}

\author[1]{Yuto Watanabe} 
\author[1]{Yang Zheng}
\affil[1]{\small Department of Electrical and Computer Engineering, University of California San Diego \vspace{-2mm}}

\date{ \small \today \vspace{-3mm}} 

\begin{document}

\maketitle
\vspace{-5mm}

\begin{abstract}
Despite its nonconvexity, policy optimization for the Linear Quadratic Regulator (LQR) admits a favorable structural property known as \textit{gradient dominance}, which facilitates linear convergence of policy gradient methods to the globally optimal gain. While gradient dominance has been extensively studied, continuous-time and discrete-time LQRs have largely been analyzed separately, relying on slightly different assumptions, proof strategies, and resulting guarantees.
In this paper, we present a unified gradient dominance property for both continuous-time and discrete-time LQRs under mild stabilizability and detectability assumptions. 
Our analysis is based on 
a convex reformulation derived from 
a common Lyapunov inequality representation and a unified change-of-variables procedure. 
This convex-lifting perspective yields a single proof framework applicable to both time models. The unified treatment clarifies how differences between continuous-time and discrete-time dynamics influence theoretical guarantees and reveals a deeper structural symmetry between the two formulations. Numerical examples illustrate and support the theoretical findings. 
\noindent 
\end{abstract} 

\input{sections/section-1}

\input{sections/section-2}

\input{sections/section-3}

\input{sections/section-4}

\section{Conclusion}\label{section:conclusion}

This paper provides a unified treatment of gradient dominance for continuous-time and discrete-time LQRs. Our analysis leverages the hidden convexity of LQR via an inequality-constrained convex reformulation that places the two time models in a common framework. Within this perspective, we show that gradient dominance follows from two elementary bounds on the optimality gap, offering a transparent explanation of why both formulations exhibit benign optimization landscapes despite nonconvex policy parameterizations. This unified view also helps isolate which aspects of the guarantees are intrinsic to LQR and which depend on the underlying time model. Future work will explore extensions of these ideas to policy optimization for robust and output-feedback control, as well as broader reinforcement-learning settings.

\addcontentsline{toc}{section}{References}
{
\bibliographystyle{ieeetr}
\bibliography{ref.bib}
}


\numberwithin{equation}{section}
\numberwithin{example}{section}
\numberwithin{remark}{section}
\numberwithin{assumption}{section}
\numberwithin{lemma}{section}
\numberwithin{proposition}{section}


\newpage

\appendix 
\vspace{10mm}
\noindent\textbf{\Large Appendix}

\crefalias{section}{appendix}
\crefalias{subsection}{appendix}
\crefalias{subsubsection}{appendix}

In the appendix, we provide technical backgrounds, proofs, and further discussions.
The appendix is organized as follows.
\begin{itemize}
    \item \cref{appendix:backgrounds_sec2-3} provides some technical backgrounds and proofs for 
\cref{section:problem_formulation,section:gradient_dominance}.
    \item \cref{appendix:proof}
    provides 
    technical proofs for 
\cref{section:proof_PL,section:proof_partialmin_chain-rule}.
\end{itemize}

\input{sections/appendix-technical-details}

\input{sections/appendix-proofs}

\end{document}

%% file: commands.tex
\newcommand{\tr}{{{\mathsf T}}}
\newcommand{\Tr}{{{\mathbf{Tr}}}}
\newcommand{\her}{{{\mathsf H}}}

\newcommand{\yang}[1]{\textcolor{red}{[YZ: #1]}}

%% file: sections/section-1.tex
\section{Introduction}
\label{sec:introduction}

The Linear Quadratic Regulator (LQR) is one of the most foundational problems in optimal control and a cornerstone of modern control theory and engineering practice \cite{kalman1960contributions,bellman1966dynamic}. A vast body of theoretical developments and practical applications builds upon the LQR framework, 
e.g., robust control \cite{zhou1996robust}, and model predictive control \cite{rawlings2017model}. 
In the LQR problem, the objective is to design an optimal controller for a linear dynamical system by minimizing the integral (in continuous time) or the sum (in discrete time) of a quadratic cost function along system trajectories.
A remarkable and elegant fact is that for the infinite-horizon formulation,  the optimal control law is linear and static, taking the form 
$u=Kx$, 
regardless of whether the underlying system evolves in continuous time or discrete time.

Policy optimization for LQR leverages the simple form of the optimal policy and yields striking 
theoretical guarantees \cite{fazel2018global,mohammadi2019global,bu2019lqr,bu2020policy,fatkhullin2021optimizing,watanabe2025revisiting}.
Together with the broader success of policy optimization in reinforcement learning, it has emerged as a powerful paradigm for model-free control design \cite{hu2023toward,talebi2024policy}. Unlike Riccati-based approaches \cite{lancaster1995algebraic} or convex reformulations \cite{boyd1994linear}, policy optimization treats controller design as a nonconvex optimization problem over the feedback gain $K$ and seeks to solve it via policy gradient methods: 
\begin{equation}\label{eq:LQR_PO_intro}
    \min_{K} \quad J(K)\quad
    \text{subject to}\quad K\in \mathcal{K}, 
\end{equation}
where $J(K)$ is the LQR cost associated with $K$ and 
$\mathcal{K}$ is the set of stabilizing gains.
Despite the inherent nonconvexity, a number of recent works \cite{fazel2018global,bu2019lqr,bu2020policy,mohammadi2019global,fatkhullin2021optimizing,watanabe2025revisiting,de2025remarks} have demonstrated that policy optimization for LQR enjoys \textit{linear convergence} to
\textit{the global minimizer} under mild assumptions. 
One key enabler of this guarantee is a structural property known as \textit{gradient dominance}, also referred to as the \textit{Polyak–Łojasiewicz (PL) inequality}:
\begin{equation}\label{eq:gradient_dominance}
    \mu(J (K)- J^\star) \leq \|\nabla J(K)\|_F^2
     ,\quad
     \forall K\in \mathcal{D}
\end{equation}
where  $J^\star$ is 
the optimal cost value, 
 $\mu$ is a positive constant, and $\mathcal{D}\subseteq \mathcal{K}$ is some suitable region.
This condition 
implies a nonconvex yet strongly convex-like landscape of \cref{eq:LQR_PO_intro} in which first-order methods can converge rapidly, and it is known to hold in both continuous-time and discrete-time LQRs.
The discovery of this benign geometry has also motivated extensions beyond standard LQR, 
such as LQR policy optimization with additional structures \cite{furieri2020learning,zhao2025data,fujinami2025policy,de2025convergence}, 
Kalman filtering \cite{umenberger2022globally,qian2025model}, Linear Quadratic Gaussian (LQG) control \cite{tang2023analysis,li2025policy,zheng2023benign,zheng2022escaping}, and robust control \cite{gravell2020learning,guo2022global,zheng2024benign,watanabe2025policy}.

Despite substantial recent progress, several fundamental questions remain unclear; most notably, the precise relationship between continuous-time and discrete-time LQRs from the perspective of policy optimization is elusive. 
Existing analyses typically treat these two settings separately, leading to differences in assumptions, proof techniques, and the resulting guarantees. \Cref{tab:conditions} lists recent gradient dominance results for the LQR problem.  
First, gradient dominance \cref{eq:gradient_dominance} in discrete-time can hold \textit{globally} over the entire set of stabilizing gains (i.e., we may choose $\mathcal{D} = \mathcal{K}$) \cite{fazel2018global,bu2019lqr}. In contrast, the continuous-time guarantee is
weaker and inherently \textit{semiglobal} \cite{watanabe2025revisiting,de2025remarks}, typically holding only on a sublevel set $\mathcal{D} = \{K\mid J(K)\leq \nu\}$ for some $\nu \geq J^\star$.
Second, the assumptions ensuring \cref{eq:gradient_dominance}  
differ across the existing works
\cite{fazel2018global,bu2019lqr,bu2020policy,mohammadi2019global,fatkhullin2021optimizing,watanabe2025revisiting}. For example, the extent to which one may allow the state weighting matrix $Q$ to be merely positive semidefinite, or the dynamic model $(A,B)$ to be only stabilizable (rather than controllable), is not always made explicit, especially in the discrete-time setting
\cite{fazel2018global,bu2019lqr}. 
Third, a further gap lies in their proof approaches: in continuous time,  gradient dominance 
can be established 
via an exact {convex reformulation} \cite{mohammadi2019global,watanabe2025revisiting}, 
whereas an analogous convex-reformulation-based proof in discrete time has not been developed, aside from preliminary observations in \cite{sun2021learning}.
This discrepancy stems from the different structures of the continuous-time and discrete-time Lyapunov equations: the discrete-time one involves multiplicative couplings that are not directly amenable to the same convexification route. 
While LQR admits a convex reformulation in both time domains after relaxing the Lyapunov equations into inequalities, this additional relaxation step makes a unified analysis elusive.
Consequently, it remains unclear which aspects of gradient dominance are intrinsic to the LQR problem and which are artifacts of the underlying time model.

\vspace{2mm}
\noindent \textbf{Our contributions.} In this paper, we bridge the aforementioned gaps by developing a unified analysis framework of gradient dominance for the continuous-time and discrete-time LQR problems.  
Our main contributions are summarized as follows.

First, we introduce a unified formulation for continuous-time and discrete-time LQRs and
establish a common gradient dominance property in both time settings (\cref{theorem:gradient_dominance}). 
The resulting property involves a non-uniform constant $\mu=\mu_K>0$ that depends on the control gain $K$. 
To obtain a uniform constant,
we further lower-bound $\mu_K$ by eliminating its dependency on $K$, leading to uniform gradient dominance on any compact subset of stabilizing gains (\cref{Corollary:gradient-dominance-compact}) and, under mild additional conditions, a global result for discrete-time LQR (\cref{theorem:global_gradient_dominance}). 
This analysis makes precise how and why discrete-time LQR naturally admits \textit{global} gradient dominance, while the continuous-time counterpart is inherently \textit{semiglobal}. 
As summarized in \cref{tab:conditions},
our \cref{theorem:gradient_dominance} only assumes
stabilizability of $(A,B)$,
detectability of $(Q,A)$ with $Q\succeq0$,
and a positive semidefinite initial covariance $\mathbb{E}[x_0x_0^\tr]=W\succeq0$.
We additionally impose \cref{assumption:X_positive-definite}, which includes $W\succ 0$ as a special case.
One important clarification is that the controllability of $(A,B)$ and strict positive definiteness of $Q$ can be simultaneously relaxed, even in the discrete-time setting, which is not made explicit in \cite{fazel2018global,bu2019lqr}. 
These theoretical findings are illustrated by numerical examples.

Second,
we show that gradient dominance in \cref{theorem:gradient_dominance} naturally emerges from two key inequalities providing, respectively, lower and upper bounds on the optimality gap $J-J^\star$ (\cref{lemma:LQR-two-key-inequalities}).~Such complementary bounds are standard for strongly convex objectives, and we establish that they also hold for both continuous-time and discrete-time LQRs, despite their inherent nonconvexity. 
Our proofs of these bounds are established by a unified convex reformulation of the LQR problems and, in particular, by global properties of the resulting convex program.
A subtle but crucial step is to relax the Lyapunov equations into inequalities and work with an inequality-constrained formulation. This relaxation is necessary in the discrete-time setting and introduces an apparent redundancy in the Lyapunov variables.
We illustrate how to circumvent the redundancy and
establish the two bounds of $J-J^\star$ by exploiting the structure of the convex reformulation.
In particular, the lower bound can be viewed as a quadratic growth property \cite{karimi2016linear,liao2024error} and closely relates to
the classical \textit{completion of squares} technique \cite{zhou1996robust,lancaster1995algebraic}.
For the upper bound, we use a chain rule for nonsmooth functions \cite{rockafellar1998variational} to connect the original LQR cost
with the inequality-constrained convex reformulation.
This enables us to handle the potential nonsmoothness arising from the inequality constraint 
and the elimination of the additional Lyapunov variable (via partial minimization).
Notably, our analysis does not rely on the specific algebraic form of the Lyapunov equations, and it reveals the shared structures underlying gradient dominance in both continuous-time and discrete-time~LQRs.

{
\begin{table}[t]
\centering
\setlength{\abovecaptionskip}{6pt}
\footnotesize
\renewcommand{\arraystretch}{1.3}
\begin{tabular}{crccccccc}
\toprule
Systems & Result& $(A,B)$ 
& $Q$ 
& $W=\mathbb{E}[x_0x_0^\tr]$ &
 Constant $\mu$
 &$\substack{\text{Additional} \\\text{assumptions}}$  \\
\midrule
\multirow{3}{*}{Continuous} & 
$ \substack{\displaystyle
    \;\;\text{Mohammadi et al., 2019 \cite{mohammadi2019global}} \\\displaystyle
      \text{Fatkhullin \& Polyak, 2021\cite{fatkhullin2021optimizing}}
      }$
 & stabilizable 
 & 
 $Q \succ 0$ & $W \succ 0$ &Semiglobal  &No\\
& Bu et al., 2020 \cite{bu2020policy} & stabilizable 
& $Q \succeq 0$ & $W \succ 0$ &Semiglobal &No\\
&Watanabe \& Zheng, 2025 \cite{watanabe2025revisiting}& stabilizable 
& $Q \succeq 0$ & $W \succeq 0$&Semiglobal& Yes\\
\hdashline
\multirow{3}{*}{Discrete}& Fazel et al., 2018 \cite{fazel2018global} & -- 
& $Q \succ 0$ & $W \succeq 0$ &Depends on $K$ &
$\substack{\text{$J(K)$} \\\text{is finite}}$
\\
&Bu et al., 2019 \cite{bu2019lqr} & controllable 
& $Q \succeq 0$ & $W \succ 0$ &Global &No\\
& \cref{theorem:global_gradient_dominance} & stabilizable 
& $Q \succeq 0$ & $W \succ 0$&Global& No\\
\hdashline
\multirow{2}{*}{Unified} & \cref{theorem:gradient_dominance} & stabilizable 
& $Q \succeq 0$ & $W\succeq 0$ & Depends on $K$ &Yes\\
&\cref{Corollary:gradient-dominance-compact} & stabilizable 
& $Q \succeq 0$ & $W\succeq 0$ & Semiglobal &Yes\\
\bottomrule
\end{tabular}
\caption{
Gradient dominance in continuous-time and discrete-time LQRs: 
i) all the results employing $Q\succeq0$ require the detectability of $(Q^{1/2},A)$;
ii) the column ``Constant $\mu$''
corresponds to $\mu>0$ in \cref{eq:gradient_dominance}, where
 ``Semiglobal'' (resp. ``Global'') means that  
\cref{eq:gradient_dominance}  holds with a uniform constant
$\mu >0$ over a compact sublevel set (resp. over the entire domain $\mathcal{K}$).
If $\mu$ takes a different value for each $K$, we write ``Depends on $K$'';
iii)
our \cref{theorem:gradient_dominance,Corollary:gradient-dominance-compact} require  \cref{assumption:X_positive-definite}, which covers $W\succ0$ as a special case  (see \cref{section:gradient_dominance});
iv)
The assumption 
``$J(K)$ is finite''
 means that gradient dominance \cref{eq:gradient_dominance} in
\cite{fazel2018global} holds for $K$ with a finite cost value. 
}
\label{tab:conditions}
\vspace{-2mm}
\end{table}}

\vspace{2mm}
\noindent \textbf{Notation and paper organization.} 
We denote the set of positive (semi)definite matrices by
$\mathbb{S}^n_{++}$ ($\mathbb{S}^n_{+}$).
Given $M_1, M_2\in\mathbb{S}^n$, we use $M_1\prec (\preceq) M_2$ and $M_1\succ (\succeq) M_2$ when $M_1-M_2$ is negative (semi)definite and positive (semi)definite, respectively.
Given $U, V \in \mathbb{R}^{p \times q}$, we write their inner product as $\langle U,V\rangle=\Tr(U^\tr V)$, where $\Tr$ denotes the trace of a square matrix. Let $X \in \mathbb{S}^n$, and we denote its minimum and maximum eigenvalues as $\lambda_{\min}(X)$ and $\lambda_{\max}(X)$, respectively. 

The remainder of this paper is organized as follows.
\cref{section:problem_formulation} presents the problem formulation of the LQR problems.
Then, we state our main results in \cref{section:gradient_dominance}, and present its proof in \cref{section:proof_PL}.
In
\cref{section:proof_partialmin_chain-rule}, we provide a proof of a supporting lemma for the main result.
Finally, \cref{section:conclusion} concludes the paper.
Additional proofs and discussions are provided in \cref{appendix:backgrounds_sec2-3,appendix:proof}. 

%% file: sections/section-2.tex
\vspace{-1mm}

\section{Problem formulation}\label{section:problem_formulation}
\vspace{-0.5mm}
Here, we present the policy optimization formulation of continuous- and discrete-time LQR problems. 

\subsection{Continuous-time and discrete-time LQR problems}\label{subsubsec:discrete-time_LQR}

\textbf{Continuous-time LQR.} 
Consider a continuous-time linear time-invariant (LTI) system:
\begin{equation}\label{eq:dynamic_continuous}
    \dot{x}(t)=A x(t)+B u(t),
\end{equation}
where $x(t) \in \mathbb{R}^n$ is the state vector and $u(t)\in \mathbb{R}^m$ is the input vector. Let $x_0 \in \mathbb{R}^n$ be the initial state at $t = 0$, and we assume $x_0$ is a random variable with zero mean and covariance  
$W = \mathbb{E}\left[x_0x_0^\tr \right] \in \mathbb{S}_{+}^n$.     
We aim to find a control input $u(t)$ to minimize the quadratic cost  
\begin{equation*}
\mathfrak{J}_{\ct}:= 
\mathbb{E}
\left[
\int_{0}^\infty 
\left(x(t)^\tr Qx(t) + u(t)^\tr Ru(t)\right)dt
\right],  
\end{equation*}
where $Q \succeq 0$ and $R\succ 0$ are performance weight matrices, and the expectation above is taken with respect to the random initial state. 
The continuous-time infinite-horizon LQR problem is then formulated as 
\begin{equation} \label{eq:LQR_continuous-time}
    \begin{aligned}
        \min_{u(t)} \quad & \mathfrak{J}_{\ct} \quad
        \text{subject to} \quad  ~\eqref{eq:dynamic_continuous},
    \end{aligned}
\end{equation}
where the input $u(t)$ at time $t$ is allowed to utilize all the past state observation $x(\tau)$ with $\tau \leq t$. 

\vspace{3pt}

\noindent \textbf{Discrete-time LQR.} 
Let us formulate the discrete-time LQR.
Consider a discrete-time LTI system:
\begin{equation}\label{eq:dynamic_discrete-time}
    x_{t+1}=A x_t+B u_t,
\end{equation}
where $x_t \in \mathbb{R}^n$ is the state vector and $u_t\in \mathbb{R}^m$ is the input vector. Let $x_0 \in \mathbb{R}^n$ be the initial state at $t = 0$, and we also assume $x_0$ is a random variable with zero mean and covariance  
$W = \mathbb{E}\left[x_0x_0^\tr \right] \in \mathbb{S}_{+}^n$.     
With two weight matrices $Q \succeq 0$ and $R\succ 0$, the cost in discrete time is given as
\begin{equation*}
\mathfrak{J}_{\dt}:= 
\mathbb{E}
\left[
\sum_{t=0}^\infty 
\left(x_t^\tr Qx_t + u_t^\tr Ru_t\right)
\right],  
\end{equation*}
where the expectation above is taken with respect to the random initial state. 
The discrete-time infinite-horizon LQR problem is formulated as
\begin{equation} \label{eq:LQR_discrete-time}
    \begin{aligned}
        \min_{u_0, u_1, \ldots, u_t, \ldots} \quad & \mathfrak{J}_{\dt} \quad
        \text{subject to} \quad  ~\eqref{eq:dynamic_discrete-time},
    \end{aligned}
\end{equation}
where the input $u_t$ at time $t$ may depend on all past state observation $x_k$, $k=0,\ldots,t$. 

\vspace{3pt}

\noindent \textbf{Globally optimal solutions.}
It is well known that the globally optimal solution to \cref{eq:LQR_continuous-time,eq:LQR_discrete-time} can be obtained by solving an algebraic Riccati equation. 
We make the following standard assumption. 

\begin{assumption} \label{assumption:stabilizable}
    The performance weights satisfy 
    $Q\in\mathbb{S}_{+}^n\setminus\{0\}, 
    R\in\mathbb{S}^m_{++}$, and 
    the covariance matrix $W$ satisfies $W\in \mathbb{S}_{+}^n\setminus\{0\}$. The pair
    $(A, B)$ is stabilizable
    and $(Q^{1/2}, A)$ is detectable. 
\end{assumption}

Note that the stabilizability and detectability for continuous-time and discrete-time systems indicate essentially the same concepts, but their mathematical characterizations slightly differ from each other; see \cite[Sec. 3.2]{zhou1996robust} and \cite[Appendix A]{chen2012optimal} for details.  
The following two theorems are well-known (see e.g., \cite[Th. 14.2]{zhou1996robust} and \cite[Th. 6.3.2]{chen2012optimal}).
\begin{theorem}[Optimal input in continuous time]
\label{theorem:LQR_solution_continuous-time}
Consider the continuous-time LQR \cref{eq:LQR_continuous-time}. Under \cref{assumption:stabilizable}, the globally optimal input is unique, given by the stabilizing state feedback
policy 
\begin{subequations} \label{eq:LQR-gain-Ct}
\begin{equation}
    u(t) =
    K^\star_{\ct} x(t), \qquad \forall t \geq 0
\end{equation}
where the optimal gain is $ K^\star_{\ct} =
    -R^{-1}
    B^\tr P^\star $ with $P^\star$ being the unique positive semidefinite solution~to
\begin{equation}\label{eq:CARE}
   A^{\tr} P+P A+Q-P B R^{-1} B^{\tr} P =0. 
\end{equation}
\end{subequations}
\end{theorem}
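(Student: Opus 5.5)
The plan is the classical completion-of-squares argument, combined with standard facts about the continuous-time algebraic Riccati equation (CARE) under \cref{assumption:stabilizable}.

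\textbf{Step 1 (existence of a stabilizing solution).} Since $(A,B)$ is stabilizable and $(Q^{1/2},A)$ is detectable, \cref{eq:CARE} has a unique positive semidefinite solution $P^\star$. This solution is stabilizing: $A-BR^{-1}B^\tr P^\star$ is Hurwitz. I would invoke this standard Riccati result (e.g., \cite[Ch. 13]{zhou1996robust}) rather than reprove it.

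If pressed, I would sketch it as follows. Stabilizability lets one run Kleinman's policy iteration from a stabilizing gain $K_0$. The Lyapunov solutions form a monotonically nonincreasing sequence $P_k\succeq 0$. Their limit solves \cref{eq:CARE}, and detectability rules out unstable modes of the closed loop. Uniqueness follows by subtracting two PSD solutions. One obtains a Lyapunov-type equation whose closed-loop matrix is Hurwitz, and detectability again forces the difference to vanish.

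\textbf{Step 2 (completion of squares).} Take any input $u(\cdot)$ for which $\mathfrak{J}_{\ct}<\infty$, and set $V(x)=x^\tr P^\star x$. Along \cref{eq:dynamic_continuous}, differentiate $V$ and use \cref{eq:CARE}. This gives
\[
x^\tr Q x+u^\tr R u+\tfrac{d}{dt}V(x)=(u-K^\star_{\ct}x)^\tr R\,(u-K^\star_{\ct}x).
\]
Integrate over $[0,T]$ and take expectations:
\[
\mathbb{E}\!\int_0^T (x^\tr Qx+u^\tr Ru)\,dt=\Tr(P^\star W)-\mathbb{E}[x(T)^\tr P^\star x(T)]+\mathbb{E}\!\int_0^T\|R^{1/2}(u-K^\star_{\ct}x)\|^2dt.
\]

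\textbf{Step 3 (letting $T\to\infty$; main obstacle).} The difficulty is controlling the term $\mathbb{E}[x(T)^\tr P^\star x(T)]$, because $Q$ is only semidefinite. Finite cost does not immediately give $x(T)\to0$.

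I would first use finite cost to get $\int_0^\infty\|Q^{1/2}x\|^2dt<\infty$ and $\int_0^\infty\|u\|^2dt<\infty$. Detectability then provides an output-injection gain $L$ with $A+LQ^{1/2}$ Hurwitz. Rewrite the dynamics as
\[
\dot x=(A+LQ^{1/2})x-LQ^{1/2}x+Bu,
\]
which is a stable system driven by an $L_2$ input. Hence $x\in L_2$, and since $\dot x\in L_2$ as well, $x(T)\to0$. This yields
\[
\mathfrak{J}_{\ct}=\Tr(P^\star W)+\mathbb{E}\!\int_0^\infty\|R^{1/2}(u-K^\star_{\ct}x)\|^2dt\;\ge\;\Tr(P^\star W).
\]
Any input with infinite cost is trivially no better.

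\textbf{Step 4 (optimality and uniqueness).} Equality holds for $u=K^\star_{\ct}x$. This input is admissible because the closed loop is Hurwitz by Step 1, so the state decays exponentially and the cost is finite. Hence $u=K^\star_{\ct}x$ is optimal.

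For uniqueness, any optimal input must make the residual integral vanish. Then $u(t)=K^\star_{\ct}x(t)$ almost everywhere along its trajectory, and since the state then follows the closed-loop ODE, the input coincides with the feedback policy $u=K^\star_{\ct}x$. (Strictly, this gives uniqueness on the support of $x_0$, which is the sense intended here.)
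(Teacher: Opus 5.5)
Your proposal is correct and is the classical argument. The paper does not prove this theorem itself but cites it as standard, and your completion of squares (with detectability, via output injection, forcing $x(T)\to 0$ for every finite-cost input, best done pathwise before taking expectations) is the continuous-time analogue of the identity the paper invokes in \cref{remark:QG-LQR}; your caveat that uniqueness concerns the input signal for almost every realization of $x_0$, not the gain, is the right reading, consistent with \cref{example:PL_fail_ct}, where a singular $W$ produces a continuum of optimal gains.
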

\begin{theorem}[Optimal input in discrete time]
\label{theorem:LQR_solution_discrete-time}
Consider the discrete-time LQR \cref{eq:LQR_discrete-time}. Under \cref{assumption:stabilizable}, the globally optimal input is unique, given by the stabilizing state feedback
policy 
\begin{subequations} \label{eq:LQR-gain-DT}
\begin{equation}
    u_t =
    K^\star_{\dt} x_t,  \qquad t = 0, 1, 2, \ldots, 
\end{equation}
where the optimal gain is $ K^\star_{\dt} =
    -(R+B^\tr P^\star B)^{-1}
    B^\tr P^\star A$ with $P^\star$ being the unique positive semidefinite solution~to
\begin{equation}\label{eq:DARE}
    P=A^\tr PA
    +Q-A^\tr PB(R+B^\tr P B)^{-1}B^\tr P A.
\end{equation}
\end{subequations}
\end{theorem}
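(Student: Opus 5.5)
The statement is the classical discrete-time LQR theorem (\cref{theorem:LQR_solution_discrete-time}). I would prove it by the same route as its continuous-time counterpart. First establish that the DARE \cref{eq:DARE} has a unique stabilizing positive semidefinite solution $P^\star$. Then use a completion-of-squares identity along trajectories to show that the feedback $u_t=K^\star_{\dt}x_t$ attains the lower bound $\Tr(P^\star W)$ for every causal input sequence.

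\textbf{Existence of $P^\star$.} Stabilizability of $(A,B)$ gives some $K_0$ with $\rho(A+BK_0)<1$. Starting from $K_0$, I would run Hewer's policy iteration. At step $k$, solve the Lyapunov equation
\[
P_k=(A+BK_k)^\tr P_k(A+BK_k)+Q+K_k^\tr RK_k,
\]
then update $K_{k+1}=-(R+B^\tr P_kB)^{-1}B^\tr P_kA$. A direct algebraic identity gives $P_k\succeq P_{k+1}\succeq 0$, and the same identity shows that $K_{k+1}$ is again stabilizing. This uses detectability of $(Q^{1/2},A)$ to rule out marginally stable modes: a Lyapunov argument with $Q+K^\tr RK$ shows that any unstable eigenvector $v$ would have to satisfy $Q^{1/2}v=0$ and $K_{k+1}v=0$, which contradicts detectability. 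The sequence is monotone and bounded below, so $P_k\to P^\star\succeq0$, and passing to the limit shows $P^\star$ solves \cref{eq:DARE}.

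\textbf{Stability and uniqueness.} Write $A_\star=A+BK^\star_{\dt}$. The DARE can be rewritten as
\[
P^\star=A_\star^\tr P^\star A_\star+Q+(K^\star_{\dt})^\tr RK^\star_{\dt}.
\]
Stability of $A_\star$ then follows from the same detectability argument. For uniqueness, suppose $P_1,P_2\succeq 0$ both solve the DARE. Their difference satisfies a Stein-type equation driven by the closed-loop matrix of one of them. One then shows that any PSD solution is automatically stabilizing, again by detectability. Finally, the equation $\Delta=A_1^\tr\Delta A_2$ with both $A_1,A_2$ Schur forces $\Delta=0$.

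\textbf{Optimality.} For any input sequence with finite cost, the term $x_t^\tr P^\star x_t$ must tend to zero. I expect this to be the main obstacle, since finite cost with only $Q\succeq0$ does not immediately force $x_t\to0$. I would handle it by detectability: choose an output-injection gain $L$ with $A+LQ^{1/2}$ Schur, and view the state as
\[
x_{t+1}=(A+LQ^{1/2})x_t-LQ^{1/2}x_t+Bu_t.
\]
This is a stable system driven by the square-summable signals $Q^{1/2}x_t$ and $u_t$, so $x_t\to0$. Next, completing the square with the DARE gives
\[
x_t^\tr Qx_t+u_t^\tr Ru_t=x_t^\tr P^\star x_t-x_{t+1}^\tr P^\star x_{t+1}+(u_t-K^\star_{\dt}x_t)^\tr(R+B^\tr P^\star B)(u_t-K^\star_{\dt}x_t).
\]
Summing over $t$ and taking expectations yields
\[
\mathfrak J_{\dt}=\Tr(P^\star W)+\mathbb E\sum_{t=0}^\infty\|u_t-K^\star_{\dt}x_t\|^2_{R+B^\tr P^\star B}.
\]
Hence $\mathfrak J_{\dt}\ge\Tr(P^\star W)$, with equality exactly when $u_t=K^\star_{\dt}x_t$ almost surely along the trajectory. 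This gives both the optimality of $K^\star_{\dt}$ and the uniqueness of the optimal input.
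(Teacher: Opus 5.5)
Your proof is correct, and it is a genuinely different route from the paper only in the sense that the paper gives no proof of this theorem at all. It imports the result as classical (citing Zhou et al.\ and Chen--Francis). The only piece of the argument that appears in the paper is the completion-of-squares identity in the remark on quadratic growth (citing Lancaster--Rodman), and that is exactly your optimality step.

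What your version adds is a self-contained, constructive derivation that shows where detectability of $(Q^{1/2},A)$ stands in for $Q\succ0$. It is used three times:
\begin{itemize}
\item to keep the Hewer iterates stabilizing;
\item to show that every positive semidefinite DARE solution is stabilizing, which together with $\Delta=A_1^\tr\Delta A_2$ gives uniqueness (this homogeneous identity does hold, using $RK_i+B^\tr P_iA_i=0$ and $P_i=A^\tr P_iA_i+Q$);
\item to make finite-cost trajectories decay via output injection.
\end{itemize}
The cited textbook treatments typically obtain $P^\star$ instead from stable invariant or deflating subspaces of the associated Hamiltonian/symplectic structure, which is less elementary but also characterizes the stabilizing solution directly.

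Three small points of presentation. First, in the policy iteration you must show $A+BK_{k+1}$ is Schur before $P_{k+1}$ is defined and before you draw the comparison $P_k\succeq P_{k+1}$. Second, for the expected cost, the cleanest way to pass to the limit is pathwise. Finite expected cost gives finite pathwise cost almost surely, hence $x_t\to0$ almost surely, and the telescoped identity holds pathwise. You can then take expectations by monotone convergence. Alternatively, you can show $\mathbb{E}\|x_t\|^2\to0$ via Minkowski's inequality applied to the output-injection convolution. Third, the uniqueness you obtain is of the input process ($u_t=K^\star_{\dt}x_t$ almost surely), not of the feedback gain. This is the right reading of the statement and is consistent with \cref{example:PL_fail_dt}, where a singular $W$ permits multiple optimal gains.
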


\subsection{LQR policy optimization}\label{subsection:LQR_PO}
The optimal LQR solutions in \cref{theorem:LQR_solution_continuous-time,theorem:LQR_solution_discrete-time} are model-based, since solving the Riccati equations in \cref{eq:CARE,eq:DARE} requires the model information $A, B, Q$, and $R$. In the past few years, model-free methods for LQR have received significant attention 
both in continuous time \cite{mohammadi2019global,bu2020policy,fatkhullin2021optimizing,watanabe2025revisiting} and in discrete time \cite{fazel2018global,bu2019lqr,gravell2020learning}, which directly optimizes the LQR policy.

In light of
\Cref{theorem:LQR_solution_continuous-time,theorem:LQR_solution_discrete-time}, we can parameterize the class of policies to be linear, static, and stabilizing, since it is rich enough to contain the optimal policy. 
In the continuous-time case, consider 
\begin{equation*}
    u(t) = Kx(t)\quad \text{with} \quad
    K\in\mathcal{K}_{\ct}
    :=\{K\in \mathbb{R}^{m\times n}\mid
    \operatorname{Re}\left(\lambda_i(A+BK)\right)<0,\,\forall i=1,\ldots,n
    \}.
\end{equation*}
The policy optimization for the continuous-time LQR \eqref{eq:LQR_continuous-time} is 
\begin{equation} \label{eq:LQR-stochastic-noise_continuous}
\begin{aligned}
\min_{K}\ \ &
J_{\ct}(K):=\mathbb{E}\;
\left[
\int_{0}^{\infty}
\left(x(t)^\tr Q x(t) + u(t)^\tr R u(t)\right)dt
\right]
\\
\text{subject to}\ \ & ~\eqref{eq:dynamic_continuous}, \quad u(t) = Kx(t),\quad
K \in \mathcal{K}_{\ct}.
\end{aligned}
\end{equation}
Analogously, in the discrete-time case, we consider 
\begin{equation*}
    u_t = Kx_t
    \quad \text{with} \quad
    K\in\mathcal{K}_{\dt}
    :=\{
    K\in \mathbb{R}^{m\times n}\mid
    \rho(A+BK)<1
    \},
\end{equation*}
where $\rho(\cdot)$ denotes the spectral radius.
The policy optimization for the discrete-time LQR \cref{eq:LQR_discrete-time} is 
\begin{equation} \label{eq:LQR-stochastic-noise_discrete}
    \begin{aligned}
\min_{K}\ \ &
J_{\dt}(K):=\mathbb{E}\;
\left[
\sum_{t=0}^{\infty}
\left(x_t^\tr Q x_t + u_t^\tr R u_t\right)
\right]
\\
\text{subject to}\ \ & ~\eqref{eq:dynamic_discrete-time}, \quad u_t = Kx_t,\quad
K \in \mathcal{K}_{\dt}. 
\end{aligned}
\end{equation}

Both \cref{eq:LQR-stochastic-noise_continuous,eq:LQR-stochastic-noise_discrete} directly search for the optimal feedback policy parameter $K$.  
Here, $J_{\ct}: \mathcal{K}_{\ct} \to \mathbb{R}$
(resp. $J_{\dt}: \mathcal{K}_{\dt} \to \mathbb{R}$)
denotes the LQR cost in continuous time (resp. in discrete time)
under the policy $K \in \mathcal{K}_{\ct}$ (resp.  $K \in \mathcal{K}_{\dt}$). 
We can now simply rewrite \cref{eq:LQR-stochastic-noise_continuous,eq:LQR-stochastic-noise_discrete} as 

\vspace{8pt}

\begin{subequations}
\label{eq:LQR-H2}
\begin{minipage}{0.48\linewidth}
\begin{equation}
\label{eq:LQR-H2-ct}
\min_{K \in \mathcal{K}_{\ct}} \; J_{\ct}(K)
\end{equation}
\end{minipage}
\hfill
\begin{minipage}{0.48\linewidth}
\begin{equation}
\label{eq:LQR-H2-dt}
\min_{K \in \mathcal{K}_{\dt}} \; J_{\dt}(K).
\end{equation}
\end{minipage}
\end{subequations}

\vspace{8pt}
It is well-known that the LQR policy optimization problems \cref{eq:LQR-H2} are nonconvex. Fortunately, the cost functions are known to be continuously differentiable on $\mathcal{K}_\ct$ and $\mathcal{K}_\dt$ \cite{fazel2018global,bu2020policy,fatkhullin2021optimizing,watanabe2025revisiting,mohammadi2019global}. 
Furthermore, the LQR costs satisfy \textit{gradient dominance} under various assumptions (see \Cref{tab:conditions}),
which allows the simple gradient descent $K_{l+1} = K_{l} - \eta \nabla J(K_l)$ with a proper stepsize to enjoy linear convergence
to a global minimizer (see e.g.,  \cite[Lem. 5]{watanabe2025revisiting} and \cite[Th. 1]{hu2023toward}).
In this paper, our main goal is to present a unified analysis for  
gradient dominance in continuous-time and discrete-time LQRs \cref{eq:LQR-H2}. %

%% file: sections/section-3.tex
\section{Unified gradient dominance analysis}\label{section:gradient_dominance}

In this section, we first 
illustrate how to unify the continuous-time and discrete-time LQR problems.
We then present a unified gradient dominance result %
with an additional \cref{assumption:X_positive-definite}.
We finally discuss the implications of \cref{assumption:X_positive-definite}
with a few illustrative examples.

\subsection{A unified formulation for continuous- and discrete-time LQRs}\label{subsection:unif_LQR}

We here introduce a linear operator $\Psi$ to provide a unified formulation for the LQR problems in \cref{eq:LQR-H2}. 
Related unification of continuous-time and discrete-time systems has appeared in the context of LMIs
\cite{peaucelle2000new,iwasaki2005generalized}.
In the following discussion, Lyapunov equations play a fundamental role. For the self-completeness, we summarize their properties in \cref{subsection:Lyapunov_summary}.

We begin with rewriting \cref{eq:LQR-H2-ct}--\cref{eq:LQR-H2-dt} into 
nonconvex programs that involve Lyapunov equations.
In the continuous-time case, for $K \in \mathcal{K}_{\ct}$, the LQR cost can be evaluated as $J_{\ct}(K) = \langle Q+K^\tr RK,X\rangle$ (see e.g., \cite[Sec. 4.3, Sec. 4.6.1]{skelton1997unified} and \cite{bu2020policy,watanabe2025revisiting}), where $X$ is the unique solution to the continuous-time Lyapunov equation
\begin{equation}\label{eq:Lyapunov_eq_continuous}
    (A+BK)X+X(A+BK)^\tr  +W  = 0 
\end{equation}
Thus, \cref{eq:LQR-H2-ct} can be equivalently rewritten as
\begin{equation}\label{eq:LQR_KX_continuous_time}
\begin{aligned}
    \min_{K,\,X}
    &\quad 
    \langle Q+K^\tr RK,X\rangle
    \\
    \text{subject to}&\quad
\cref{eq:Lyapunov_eq_continuous},\quad
    K\in\mathcal{K}_{\ct}.
\end{aligned}
\end{equation}
Analogously, for $K \in \mathcal{K}_{\dt}$, the discrete-time LQR cost can be evaluated as $J_{\dt}(K) = \langle Q+K^\tr RK,X\rangle$ (see e.g., \cite[Sec. 4.6.2]{skelton1997unified} and \cite{bu2019lqr}), where $X$ is the unique solution to the discrete-time Lyapunov equation
\begin{equation}\label{eq:Lyapunov_eq_discrete}
    (A+BK)X(A+BK)^\tr -X +W  = 0. 
\end{equation}
Therefore, the discrete-time LQR \cref{eq:LQR-H2-dt} can be represented as
\begin{equation}\label{eq:LQR_KX_discrete_time}
\begin{aligned}
    \min_{K,\,X}
    &\quad 
    \langle Q+K^\tr RK,X\rangle
    \\
    \text{subject to}&\quad
\cref{eq:Lyapunov_eq_discrete},\quad
    K\in\mathcal{K}_{\dt}.
\end{aligned}
\end{equation}

One main difference in \Cref{eq:LQR_KX_continuous_time,eq:LQR_KX_discrete_time} lies in the form of the associated Lyapunov equation. Motivated by this observation, we define two linear operators corresponding to the continuous-time and discrete-time settings, respectively: 
\begin{align*}
    \Psi_{\ct}
    \left(
    \begin{bmatrix}
        F & G\\
        G^\tr& H
    \end{bmatrix}
    \right)
    =  G+G^\tr,\qquad
    \Psi_{\dt}
    \left(
    \begin{bmatrix}
        F & G\\
        G^\tr& H
    \end{bmatrix}
    \right)
    =F-H,
\end{align*}
where $F, G, H$ are real matrices of compatible dimensions. 
Using these two operators, both the continuous-time and discrete-time LQR problems in \cref{eq:LQR-H2}  can be written in the unified form 
\begin{subequations}\label{eq:LQR_unif}
\begin{align}
    \min_{K,\,X}
    &\quad 
    \langle Q+K^\tr RK,X\rangle
    \\
    \label{eq:LQR_unif_Lyapunov}
    \text{subject to}&\quad
    \Psi
   \left(
   \begin{bmatrix}
       A+BK\\
       I
   \end{bmatrix}X
   \begin{bmatrix}
       A+BK\\
       I
   \end{bmatrix}^\tr
   \right)
    +W = 0
    ,\quad
    K\in\mathcal{K}. 
\end{align}
\end{subequations}
Here, $\Psi = \Psi_{\ct}$ and $\mathcal{K}=\mathcal{K}_\ct$ correspond to the continuous-time case, while $\Psi = \Psi_{\dt}$ and $\mathcal{K}=\mathcal{K}_\dt$ correspond to the discrete-time case.  

We summarize the discussion above as a lemma below.
This unified representation \cref{eq:LQR_unif} plays an important role in developing a common proof framework for the gradient dominance property of continuous-time and discrete-time LQRs.
\begin{lemma} \label{lemma:LQR-unification}
    Under \Cref{assumption:stabilizable}, the continuous-time LQR \cref{eq:LQR-H2-ct} and problem \cref{eq:LQR_unif} with $\Psi = \Psi_{\ct}$ and $\mathcal{K}=\mathcal{K}_\ct$ are equivalent in the sense that they have the same optimal gain and the same optimal value. The discrete-time LQR \cref{eq:LQR-H2-dt} and problem \cref{eq:LQR_unif} with $\Psi = \Psi_{\dt}$ and $\mathcal{K}=\mathcal{K}_\dt$ are also equivalent. 
\end{lemma}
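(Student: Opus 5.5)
We first check that the operator form \cref{eq:LQR_unif_Lyapunov} reproduces the two Lyapunov equations exactly, and then argue that the constraint pins down $X$ uniquely as a function of $K$. Once that is done, problem \cref{eq:LQR_unif} is just a reparametrization of \cref{eq:LQR-H2}, so both problems have the same feasible gains, the same objective values on them, and hence the same optimal gain and value.

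\textbf{Step 1 (algebraic identity).} For $K\in\mathbb{R}^{m\times n}$ and $X\in\mathbb{S}^n$, write $A_K=A+BK$ and expand
\[
\begin{bmatrix} A_K\\ I\end{bmatrix}X\begin{bmatrix} A_K\\ I\end{bmatrix}^\tr=\begin{bmatrix} A_KXA_K^\tr & A_KX\\ XA_K^\tr & X\end{bmatrix}.
\]
This block matrix has $F=A_KXA_K^\tr$, $G=A_KX$ and $H=X$.
\begin{itemize}
    \item Applying $\Psi_{\ct}$ gives $G+G^\tr=A_KX+XA_K^\tr$. Hence \cref{eq:LQR_unif_Lyapunov} is exactly \cref{eq:Lyapunov_eq_continuous}.
    \item Applying $\Psi_{\dt}$ gives $F-H=A_KXA_K^\tr-X$. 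Hence \cref{eq:LQR_unif_Lyapunov} is exactly \cref{eq:Lyapunov_eq_discrete}.
\end{itemize}
Consequently, \cref{eq:LQR_unif} with $(\Psi_{\ct},\mathcal{K}_{\ct})$ coincides with \cref{eq:LQR_KX_continuous_time}, and with $(\Psi_{\dt},\mathcal{K}_{\dt})$ it coincides with \cref{eq:LQR_KX_discrete_time}.

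\textbf{Step 2 (elimination of $X$).} Fix $K\in\mathcal{K}$. Because $A_K$ is Hurwitz (respectively Schur), the corresponding Lyapunov equation has the unique solution $X_K=\int_0^\infty e^{A_Kt}We^{A_K^\tr t}dt$ (respectively $X_K=\sum_{t\ge0}A_K^tW(A_K^\tr)^t$); this is the standard fact recalled in \cref{subsection:Lyapunov_summary}. Substituting the closed loop $x=e^{A_Kt}x_0$ (respectively $x_t=A_K^tx_0$) and $u=Kx$ into the cost and taking expectations with $\mathbb{E}[x_0x_0^\tr]=W$ yields $J(K)=\langle Q+K^\tr RK,X_K\rangle$. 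So the feasible set of \cref{eq:LQR_unif} is the graph $\{(K,X_K):K\in\mathcal{K}\}$, and its objective on this graph equals $J(K)$. Minimizing over this graph is therefore the same as minimizing $J$ over $\mathcal{K}$: every minimizer $K$ of one problem is a minimizer of the other, and the optimal values agree.

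\textbf{Step 3 (attainment).} It remains to show that the common optimal value is attained and that the optimal gain is the Riccati one. By \cref{theorem:LQR_solution_continuous-time,theorem:LQR_solution_discrete-time}, under \cref{assumption:stabilizable} the optimal input over all causal policies is $u=K^\star x$ with $K^\star\in\mathcal{K}$. Since every $K\in\mathcal{K}$ defines an admissible policy, $J(K^\star)\le J(K)$ for all $K\in\mathcal{K}$. So $K^\star$ is optimal for \cref{eq:LQR-H2}, and by Step 2 the pair $(K^\star,X_{K^\star})$ is optimal for \cref{eq:LQR_unif}.

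The only step requiring care is Step 2, specifically the uniqueness of the Lyapunov solution and its trace representation of the cost, both of which rely on stability of $A_K$. Since these are classical facts, I expect no real obstacle; the rest is the bookkeeping of Step 1.
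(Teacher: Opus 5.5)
Your proposal is correct and takes essentially the paper's route. The paper justifies the lemma by the discussion that precedes it: for $K\in\mathcal{K}$ the cost equals $\langle Q+K^\tr RK, X_K\rangle$, where $X_K$ is the unique Lyapunov solution, and expanding the block matrix shows that $\Psi_{\ct}$ and $\Psi_{\dt}$ reproduce \cref{eq:Lyapunov_eq_continuous} and \cref{eq:Lyapunov_eq_discrete}. Your Step 3 (attainment via \cref{theorem:LQR_solution_continuous-time,theorem:LQR_solution_discrete-time}) is left implicit in the paper, and your Step 2 reading of ``same optimal gain'' as ``same set of minimizers'' is the right one, since under \cref{assumption:stabilizable} alone the optimal gain need not be unique.
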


\subsection{Gradient dominance in continuous- and discrete-time LQRs}\label{subsection:PL_nonuniform}

For each stabilizing gain $K \in \mathcal{K}$, the Lyapunov equation in \cref{eq:LQR_unif_Lyapunov} admits a unique solution $X$. Consequently, the Lyapunov variable can be viewed as an implicit function of the policy parameter $K$, which we denote by $X_K$. With this elimination of $X$, we can rewrite the unified LQR \cref{eq:LQR_unif}  as a policy optimization problem over the set of stabilizing gains
\begin{equation}\label{eq:LQR_PO_unif}
    J^\star =\min_{K\in\mathcal{K}}\; J(K),
\end{equation}
where  
$ J(K) = \langle Q+K^\tr RK, X_K\rangle, 
$ 
and $X_K$ is the unique solution to the Lyapunov equation in \cref{eq:LQR_unif_Lyapunov} associated with $K$. 
This is simply a reformulation of the continuous-time and discrete-time LQRs in \cref{eq:LQR-H2}, in which the LQR cost is evaluated via the corresponding Lyapunov equation.

Beyond its role in evaluating the LQR cost, the Lyapunov variable $X_K$
 plays a central role in the analysis of the optimization landscape. In both continuous-time and discrete-time settings, it is well known that $X_K$
 is positive semidefinite for any stabilizing gain $K \in \mathcal{K}$. To develop gradient dominance properties, we impose the following technical assumption.
\begin{assumption}\label{assumption:X_positive-definite}
Let $X_K$ be a solution to the Lyapunov equation \cref{eq:LQR_unif_Lyapunov}
corresponding to a policy parameter $K \in \mathbb{R}^{m \times n}$. We assume that 
$   K\in \mathcal{K}$ if and only if $
   X_K$ is positive definite. 
\end{assumption}

This assumption allows us to derive equivalent convex reformulations of the continuous-time and discrete-time LQRs \cref{eq:LQR_unif}. 
We defer the details to \Cref{subsection:convex_reformulation}.
In addition, 
\Cref{assumption:X_positive-definite} is not a strict requirement 
and has further control-theoretic implications.
We provide these discussions in \cref{subsection:assumption-2}. 
Under \cref{assumption:stabilizable,assumption:X_positive-definite},
the unified LQR cost function in \cref{eq:LQR_PO_unif} satisfies
a gradient dominance property with a constant $\mu_K$ depending on $K\in\mathcal{K}$.
The result applies uniformly to both continuous-time and discrete-time systems.
\begin{theorem}[Unified gradient dominance with a non-uniform constant]\label{theorem:gradient_dominance}
Consider the LQR policy optimization problem \cref{eq:LQR_PO_unif}.
Under 
\cref{assumption:stabilizable,assumption:X_positive-definite},
the following inequality holds 
\begin{equation}\label{eq:gradient_dominance_thm}
         \mu_K
         \left(J(K)-J^\star \right)
     \leq 
     \|\nabla J(K)\|_F^2, \quad \forall \, K\in \mathcal{K},
\end{equation}
where the constant $\mu_K >0$ is given by 
\begin{equation} \label{eq:gradient-domiance-constant}
    \mu_K = 
\frac{\lambda_\mathrm{min}(R)
     \lambda_\mathrm{min}(X_K)^3}
     {\lambda_\mathrm{max}(X^\star)^2}. 
\end{equation}
Here, $X_K$  denotes the solution to the Lyapunov equation \cref{eq:LQR_unif_Lyapunov} associated with $K$, and $X^\star$ denotes the corresponding Lyapunov solution for the optimal LQR gain $K^\star$.
\end{theorem}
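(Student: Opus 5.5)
The plan is to derive \cref{eq:gradient_dominance_thm} from two complementary bounds on the optimality gap, in the spirit of strongly convex analysis.

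\textbf{Gradient formula.} First I will compute $\nabla J(K)$ in the unified form. Let $P_K$ solve the adjoint Lyapunov equation, i.e., $A_K^\tr P+PA_K+Q+K^\tr RK=0$ (continuous) or $A_K^\tr PA_K-P+Q+K^\tr RK=0$ (discrete), with $A_K=A+BK$. Define $E_K=RK+B^\tr P_K$ in continuous time and $E_K=(R+B^\tr P_KB)K+B^\tr P_KA$ in discrete time. Then $\nabla J(K)=2E_KX_K$ in both cases, and hence
\[
\|\nabla J(K)\|_F^2\ \ge\ 4\lambda_{\min}(X_K)^2\,\|E_K\|_F^2 .
\]

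\textbf{Upper bound on the gap.} Next I will show $J(K)-J^\star\le c\,\lambda_{\max}(X^\star)\|E_K\|_F^2/\lambda_{\min}(R)$. The tool is the cost-difference (completion-of-squares) identity. Writing $\Delta=K^\star-K$, one has $J(K^\star)-J(K)=\langle 2\Delta^\tr E_K+\Delta^\tr \tilde R\Delta,\,X^\star\rangle$, where $\tilde R=R$ in continuous time and $\tilde R=R+B^\tr P_KB$ in discrete time. Minimizing over $\Delta$ pointwise gives
\[
J(K)-J^\star\le\langle E_K^\tr\tilde R^{-1}E_K,X^\star\rangle\le \lambda_{\max}(X^\star)\|E_K\|_F^2/\lambda_{\min}(R).
\]
Combined with the gradient formula, this already yields a constant $4\lambda_{\min}(R)\lambda_{\min}(X_K)^2/\lambda_{\max}(X^\star)$. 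That constant differs from the stated one. The paper's route instead goes through the convex reformulation, and the stated $\mu_K$ has powers $3$ and $2$. So I will match it by an alternative factorization.

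\textbf{Matching the stated constant.} The intended structure appears to be the following.
\begin{itemize}
\item[(i)] A quadratic-growth lower bound $J(K)-J^\star\ge \lambda_{\min}(R)\lambda_{\min}(X_K)\|K-K^\star\|_F^2$, which comes from $J(K)-J^\star=\langle (K-K^\star)^\tr \tilde R^\star (K-K^\star),X_K\rangle$, the completion of squares around $P^\star$.
\item[(ii)] An upper bound via convexity of the lifted problem in $(Y,X)=(KX,X)$, of the form $J(K)-J^\star\le \langle\nabla J(K),\,(K-K^\star)\rangle$ scaled by $X$-factors, roughly $\|\nabla J(K)\|_F\,\|K-K^\star\|_F\,\lambda_{\max}(X^\star)/\lambda_{\min}(X_K)$.
\end{itemize}
To establish (ii), I will use the chain rule through the partial minimization $K\mapsto \min_X$ of the convex inequality-constrained program, together with the convexity inequality $f(Z^\star)\ge f(Z)+\langle\nabla f(Z),Z^\star-Z\rangle$. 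The lifted displacement $Y^\star-Y=K^\star X^\star-KX_K$ is mapped back to $K$-space at the cost of the factors $\lambda_{\max}(X^\star)/\lambda_{\min}(X_K)$.

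\textbf{Combining.} Squaring (ii) and inserting (i) gives
\[
(J-J^\star)^2\le \|\nabla J\|_F^2\,\frac{\lambda_{\max}(X^\star)^2}{\lambda_{\min}(X_K)^2}\cdot\frac{J-J^\star}{\lambda_{\min}(R)\lambda_{\min}(X_K)},
\]
which is exactly $\mu_K(J-J^\star)\le\|\nabla J\|_F^2$.

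\textbf{Main obstacle.} The hard step is (ii) in discrete time. The convex lift needs the Lyapunov equality relaxed to an inequality, via a Schur complement, to be convex. One must then show that the partially minimized value function coincides with $J$ (\cref{assumption:X_positive-definite} ensures the relevant $X\succ0$) and that its subgradient relates to $\nabla J(K)$ despite the nonsmoothness. I would try to argue that the inequality is active at the minimizer, so that $X=X_K$ and a smooth chain rule applies locally.
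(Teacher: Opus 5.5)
\textbf{Step (ii) is a genuine gap, but your own cost-difference identity closes it.}

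The route you actually complete is correct: the gradient formula plus minimizing the cost-difference identity over $\Delta$. However, it proves gradient dominance with $\mu_K'=4\lambda_{\min}(R)\lambda_{\min}(X_K)^2/\lambda_{\max}(X^\star)$, not the stated constant. The ratio $\mu_K/\mu_K'=\lambda_{\min}(X_K)/(4\lambda_{\max}(X^\star))$ is unbounded on $\mathcal{K}$. For instance, take $x_{t+1}=x_t+u_t$ with $W=Q=R=1$: then $X_K=1/(1-(1+k)^2)\to\infty$ as $k\to0^-$ while $X^\star$ stays fixed. So your $\mu_K'$ does not imply the stated inequality, and everything rests on your step (ii).

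That step is not proved. You state it only ``roughly'' and never identify which subgradient of the lifted convex function is tied to $\nabla J(K)$. The plan in your last paragraph would also not work:
\begin{itemize}
\item The Lyapunov inequality being active at $X_K$ is exactly what puts $(Y_K,X_K)$ on the boundary of $\mathcal{F}_\mathrm{cvx}$. There $\tilde f_\mathrm{cvx}=f_\mathrm{cvx}+\delta_{\mathcal{F}_\mathrm{cvx}}$ is nonsmooth, and its subdifferential is $\nabla f_\mathrm{cvx}(Y_K,X_K)$ plus a normal cone.
\item A smooth chain rule along the manifold $\{(K,X_K)\}$ only fixes the component common to all subgradients, namely the one tangent to that manifold. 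The convexity inequality, however, is applied along the chord to $(0,X^\star)$, which leaves the manifold.
\item The map $K\mapsto\min_X$ is a partial minimization in the nonconvex $(K,X)$ coordinates, not in the convex $(Y,X)$ ones.
\end{itemize}
What is needed, in the shifted coordinates $Y=(K-K^\star)X$, is the specific element $\bigl(\nabla J(K)X_K^{-1},\,(K^\star-K)^\tr\nabla J(K)X_K^{-1}\bigr)$. Equivalently, this is $\nabla f_\mathrm{cvx}(Y_K,X_K)$ plus the normal vector obtained by pairing the constraint with the multiplier $P_K$. Its $X$-block is what makes $\langle H_1,Y_K\rangle+\langle H_2,X_K-X^\star\rangle$ collapse to $\langle\nabla J(K)X_K^{-1}X^\star,K-K^\star\rangle$. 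The paper obtains this element in three steps. First, it shows $(\nabla J(K),0)\in\hat\partial J_\mathrm{lft}(K,X_K)$ directly from $J_\mathrm{lft}(\hat K,\hat X)\ge J(\hat K)$ and the differentiability of $J$. Second, it transports this through the diffeomorphism $\Pi=\Upsilon^{-1}$ by the nonsmooth chain rule. Third, it uses convexity of $\tilde f_\mathrm{cvx}$ to make it a global subgradient.

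\textbf{The fix.} You need no lifting at all, because the identity you already wrote down gives (ii) exactly. Instead of minimizing over $\Delta=K^\star-K$, simply discard the nonnegative term $\langle\Delta^\tr\tilde R\Delta,X^\star\rangle$:
\[
J(K)-J^\star=-2\langle\Delta^\tr E_K,X^\star\rangle-\langle\Delta^\tr\tilde R\Delta,X^\star\rangle\le 2\langle E_KX^\star,K-K^\star\rangle=\langle\nabla J(K)X_K^{-1}X^\star,K-K^\star\rangle .
\]
The last equality uses $\nabla J(K)=2E_KX_K$ and $X_K\succ0$. This is precisely the paper's upper bound. Cauchy--Schwarz then gives (ii) with the exact factor $\lambda_{\max}(X^\star)/\lambda_{\min}(X_K)$. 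Together with your (i), which is completion of squares around $P^\star$ using $E_{K^\star}=0$, your ``Combining'' step yields $\mu_K$.

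The resulting proof is fully elementary and uses only the adjoint Lyapunov equation, in both continuous and discrete time. The paper's convex-lifting route is longer, but it explains the same upper bound as a consequence of hidden convexity, which is the point of its unified framework.
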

Our proof is based on a convex-lifting argument and leverages an exact convex reformulation of \cref{eq:LQR_unif}. The technical details are presented in \cref{section:proof_PL}.

It is known that gradient dominance holds for continuous-time and discrete-time LQRs when analyzed separately \cite{fazel2018global,mohammadi2019global,fatkhullin2021optimizing,bu2019lqr,bu2020policy}. Existing analyses, however, rely on proof techniques tailored to the specific time-domain setting, and the resulting gradient dominance constants take different forms in the continuous-time and discrete-time cases. 
In contrast, the gradient dominance property established in \Cref{theorem:gradient_dominance} applies uniformly to both continuous-time and discrete-time. The corresponding constant in \cref{eq:gradient-domiance-constant} admits the same functional form in both settings. Moreover, the proof proceeds by an identical convex-lifting argument based on the unified Lyapunov representation. This unified perspective not only streamlines the analysis and clarifies the underlying LQR landscapes but also reveals a deeper structural symmetry between continuous-time and discrete-time formulations.

We note that the gradient dominance constant in \cref{eq:gradient-domiance-constant} is strictly positive but depends on the specific policy. If there exists a uniform lower bound on the minimum eigenvalue of $X_K$, then a uniform gradient dominance constant follows immediately. Deriving such a bound typically requires additional~assumptions, and the continuous-time and discrete-time settings may necessitate different arguments. 
Nevertheless, we have the following corollary that applies to both 
settings.

\begin{corollary}[Uniform constant $\mu$ over a compact set] \label{Corollary:gradient-dominance-compact}
    Consider the LQR policy optimization problem \cref{eq:LQR_PO_unif}.
Suppose  
\cref{assumption:stabilizable,assumption:X_positive-definite} hold. Then, for any compact subset $\tilde{\mathcal{K}} \subseteq \mathcal{K}$, there exists a positive constant $\mu >0$ such that %
\begin{equation*}
         \mu
         \left(J(K)-J^\star \right)
     \leq 
     \|\nabla J(K)\|_F^2, \quad \forall \, K\in \tilde{\mathcal{K}}. 
\end{equation*}
\end{corollary}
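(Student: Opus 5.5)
The plan is to derive the corollary directly from \cref{theorem:gradient_dominance}. On a compact set we lower-bound the $K$-dependent constant $\mu_K$ by a single positive number. By \cref{theorem:gradient_dominance}, for every $K\in\tilde{\mathcal{K}}\subseteq\mathcal{K}$ we have $\mu_K(J(K)-J^\star)\leq\|\nabla J(K)\|_F^2$, where
\[
\mu_K=\frac{\lambda_{\min}(R)\lambda_{\min}(X_K)^3}{\lambda_{\max}(X^\star)^2}.
\]
The factor $\lambda_{\min}(R)/\lambda_{\max}(X^\star)^2$ does not depend on $K$. It is positive because $R\succ0$, and because $X^\star=X_{K^\star}\succ0$ by \cref{assumption:X_positive-definite} (as $K^\star\in\mathcal{K}$). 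So it suffices to show that
\[
\underline{\lambda}:=\inf_{K\in\tilde{\mathcal{K}}}\lambda_{\min}(X_K)>0 .
\]
Then we set $\mu:=\lambda_{\min}(R)\underline{\lambda}^3/\lambda_{\max}(X^\star)^2$. Since $J(K)-J^\star\geq0$, we get $\mu(J(K)-J^\star)\leq\mu_K(J(K)-J^\star)\leq\|\nabla J(K)\|_F^2$ for all $K\in\tilde{\mathcal{K}}$.

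The key step is continuity of $K\mapsto X_K$ on $\mathcal{K}$. For $K\in\mathcal{K}$, the Lyapunov equation \cref{eq:LQR_unif_Lyapunov} is a linear equation $\mathcal{L}_K(X)=-W$. Here $\mathcal{L}_K(X)=(A+BK)X+X(A+BK)^\tr$ in continuous time and $\mathcal{L}_K(X)=(A+BK)X(A+BK)^\tr-X$ in discrete time. The operator $\mathcal{L}_K$ depends polynomially (hence continuously) on $K$. It is invertible whenever $A+BK$ is Hurwitz (resp. Schur), because its eigenvalues are $\lambda_i+\lambda_j$ (resp. $\lambda_i\lambda_j-1$), none of which vanish. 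Hence $X_K=-\mathcal{L}_K^{-1}(W)$. Matrix inversion is continuous on invertible operators, so $K\mapsto X_K$ is continuous on $\mathcal{K}$. These facts are standard and are summarized in \cref{subsection:Lyapunov_summary}.

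Since $\lambda_{\min}(\cdot)$ is continuous on symmetric matrices, $K\mapsto\lambda_{\min}(X_K)$ is continuous on the compact set $\tilde{\mathcal{K}}$. It therefore attains its minimum at some $\tilde K\in\tilde{\mathcal{K}}\subseteq\mathcal{K}$. By \cref{assumption:X_positive-definite}, $X_{\tilde K}\succ0$, so $\underline{\lambda}=\lambda_{\min}(X_{\tilde K})>0$, and the proof is complete.

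The only mildly delicate point is justifying continuity of $X_K$ in a way that holds for both time models at once. I would do this through the invertibility of the unified operator $X\mapsto\Psi\bigl([A+BK;\,I]X[A+BK;\,I]^\tr\bigr)$ for $K\in\mathcal{K}$. The case $W\succeq0$ singular needs no separate treatment, since positivity of $X_K$ is supplied entirely by \cref{assumption:X_positive-definite}.
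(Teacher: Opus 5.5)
Your proposal is correct and is essentially the paper's own proof. Both lower-bound $\mu_K$ on $\tilde{\mathcal{K}}$ by taking the minimum of the continuous function $K\mapsto\lambda_{\min}(X_K)$ over the compact set, which is positive by \cref{assumption:X_positive-definite}. Your invertibility argument for the unified Lyapunov operator only spells out what the paper states briefly, namely that $X_K$ is rational, hence continuous, in $K$.
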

\begin{proof}
    The solution $X_K$ to the Lyapunov equation \cref{eq:LQR_unif_Lyapunov} is a rational (and thus continuous) function of $K$ over $\mathcal{K}$. Recall that $\lambda_\mathrm{min}(X_K) > 0$ for any $K \in \tilde{\mathcal{K}}$. We can take 
    $$
    \lambda_{0} = \min_{K \in \tilde{\mathcal{K}}}  \lambda_\mathrm{min}(X_K),
    $$
    which is strictly positive due to the continuity and the compactness. We complete the proof by taking $\mu = {\lambda_\mathrm{min}(R)
     \lambda_{0}^3}/
     {\lambda_\mathrm{max}(X^\star)^2} > 0$ in \Cref{eq:gradient-domiance-constant}.
\end{proof}

\Cref{Corollary:gradient-dominance-compact} establishes gradient dominance only over compact subsets of stabilizing gains.
While the compactness assumption may appear restrictive, it is already sufficient to guarantee linear convergence of the standard gradient descent for a broad class of continuous-time and discrete-time LQR problems (see \cite[Th. 1]{hu2023toward} and \cite[Lem. 5]{watanabe2025revisiting}). In particular, if a sublevel set of the cost, 
$\mathcal{K}_\nu :=\{K \in \mathcal{K} \mid J(K) \leq \nu\},$  
is compact, 
then \Cref{Corollary:gradient-dominance-compact} naturally holds over $\mathcal{K}_\nu$. Note that gradient descent with an appropriate step size ensures that all iterates remain within the initial sublevel set (see \cite[Lem. 5]{watanabe2025revisiting} and \cite[Th. 1]{hu2023toward}), so \cref{eq:gradient_dominance_thm} holds for all iterations. 

For continuous-time LQR, a uniform gradient dominance constant may fail to exist over an unbounded set of stabilizing gains, even under stronger assumptions such as controllability of $(A, B)$ and positive definiteness of $Q$ and $W$ \cite{watanabe2025revisiting,de2025remarks}. An explicit example can be found in \cite[Example 3]{watanabe2025revisiting}. This is because we may not be able to uniformly bound the minimum eigenvalue of $X_K$ over an unbounded region. 
In this sense, \Cref{Corollary:gradient-dominance-compact} represents the strongest form of gradient dominance that can be expected for continuous-time LQR.
 
In contrast, for discrete-time LQR, a global gradient dominance property can be established under mild conditions since we can uniformly bound the minimum eigenvalue of $X_K$. 

\begin{corollary}[Global gradient dominance for discrete-time systems]\label{theorem:global_gradient_dominance}
    Consider the discrete-time LQR policy optimization problem \cref{eq:LQR-H2-dt}.
Suppose  
\cref{assumption:stabilizable} holds. If $W \succ 0$, then there exists a positive constant $\mu >0$ such that 
\begin{equation}\label{eq:gradient_dominance_thm-global}
         \mu
         \left(J_\dt(K)-J^\star \right)
     \leq 
     \|\nabla J_\dt(K)\|_F^2, \quad \forall \, K\in {\mathcal{K}_\dt}. 
\end{equation}
\end{corollary}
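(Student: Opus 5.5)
The plan is to derive \cref{eq:gradient_dominance_thm-global} from \cref{theorem:gradient_dominance} by giving a lower bound on $\lambda_{\min}(X_K)$ that holds uniformly over $\mathcal{K}_\dt$. Two things must be checked first. One is that \cref{assumption:X_positive-definite} holds in the discrete-time setting when $W\succ 0$. The other is that the constant $\mu_K$ in \cref{eq:gradient-domiance-constant} can be bounded below independently of $K$.

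\textbf{Step 1: verify \cref{assumption:X_positive-definite}.} Take $\Psi=\Psi_\dt$, so the constraint \cref{eq:LQR_unif_Lyapunov} reads $X = (A+BK)X(A+BK)^\tr + W$.

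For the forward direction, let $K\in\mathcal{K}_\dt$. Then the unique solution is
\[
X_K=\sum_{t\ge 0}(A+BK)^tW((A+BK)^\tr)^t \succeq W \succ 0 .
\]

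For the converse, suppose some $X\succ 0$ solves the equation. Take a left eigenvector $v$ of $(A+BK)^\tr$, so that $(A+BK)^\tr v=\lambda v$. Then
\[
v^\her X v = |\lambda|^2 v^\her X v + v^\her W v .
\]
Since $v^\her Wv>0$ and $v^\her Xv>0$, this forces $|\lambda|<1$. Hence $\rho(A+BK)<1$, and \cref{assumption:X_positive-definite} holds.

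\textbf{Step 2: uniform lower bound on $X_K$.} The $t=0$ term of the series gives $X_K\succeq W$. Therefore $\lambda_{\min}(X_K)\ge \lambda_{\min}(W)>0$ for every $K\in\mathcal{K}_\dt$.

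This is exactly the point where discrete time differs from continuous time. In continuous time, $X_K=\int_0^\infty e^{(A+BK)t}We^{(A+BK)^\tr t}\,dt$ can shrink toward zero as $K$ becomes large. In discrete time, the initial term $W$ always survives in the sum.

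\textbf{Step 3: conclude.} Substitute the bound from Step 2 into \cref{eq:gradient-domiance-constant}. This gives
\[
\mu_K\ \ge\ \mu:=\frac{\lambda_{\min}(R)\lambda_{\min}(W)^3}{\lambda_{\max}(X^\star)^2}>0 .
\]
The denominator $\lambda_{\max}(X^\star)$ is a fixed finite number, because it depends only on the optimal gain $K^\star_\dt$ from \cref{theorem:LQR_solution_discrete-time}. Since $J_\dt(K)-J^\star\ge 0$, we get
\[
\mu\,(J_\dt(K)-J^\star)\le \mu_K\,(J_\dt(K)-J^\star)\le\|\nabla J_\dt(K)\|_F^2 .
\]
This holds for all $K\in\mathcal{K}_\dt$.

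The only step needing care is Step 1, specifically the converse direction of \cref{assumption:X_positive-definite}. Once \cref{theorem:gradient_dominance} is available, everything else is immediate. The eigenvector argument above handles this step, and it is where the hypothesis $W\succ 0$ is used essentially.
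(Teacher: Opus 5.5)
Your proposal is correct and follows the paper's proof: it verifies \cref{assumption:X_positive-definite} from $W\succ 0$ (the paper cites \cref{proposition:X_positive}, whose appendix proof is the same eigenvector argument you give), then uses $X_K\succeq W$ from the series representation to bound $\mu_K$ below by $\lambda_{\min}(R)\lambda_{\min}(W)^3/\lambda_{\max}(X^\star)^2$ uniformly on $\mathcal{K}_\dt$. One wording slip: a $v$ with $(A+BK)^\tr v=\lambda v$ is a left eigenvector of $A+BK$, not of $(A+BK)^\tr$; your computation uses the displayed relation correctly, so nothing else is affected.
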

\begin{proof}
    When $W \succ 0$, \Cref{assumption:X_positive-definite} holds naturally; see \Cref{proposition:X_positive} in \Cref{subsection:assumption-2}. Furthermore, the solution to the discrete-time Lyapunov equation \cref{eq:Lyapunov_eq_discrete} can be written as 
 $   X_K=\sum_{t=0}^\infty (A+BK)^t W\left((A+BK)^\tr\right)^t$ for any $K \in \mathcal{K}_{\dt}.$ 
Thus, we know $X_K\succeq W\succ 0$ for any $K \in \mathcal{K}_\dt$. Taking $\mu = {\lambda_\mathrm{min}(R)
     \lambda_\mathrm{min}(W)^3}/
     {\lambda_\mathrm{max}(X^\star)^2} > 0$ in \Cref{eq:gradient-domiance-constant} completes the proof. 
\end{proof}

The positive definiteness of $W$ has been widely used to establish the global gradient dominance in the discrete-time LQR; see for example \cite{fazel2018global,bu2019lqr}. In \Cref{theorem:global_gradient_dominance}, we only require \Cref{assumption:stabilizable}, i.e., $(A,B)$ is stabilizable and $(Q^{1/2},A)$ is detectable. This slightly generalizes existing results \cite{fazel2018global,bu2019lqr}  by allowing the weighting matrix $Q$ to be positive semidefinite rather than strictly positive definite.

\subsection{Discussions on \Cref{assumption:X_positive-definite} and examples} \label{subsection:assumption-2}

In addition to the standard \Cref{assumption:stabilizable}, the gradient dominance result in \Cref{theorem:gradient_dominance} requires an additional \cref{assumption:X_positive-definite} involving the Lyapunov variable $X_K$. 
We here provide further control-theoretic implications and present several examples for illustration. %

It is well-known that the Lyapunov variable $X_K$ in \cref{eq:LQR_unif_Lyapunov} can be interpreted as the accumulated covariance of the system state. In particular, consider the closed-loop dynamics with a feedback gain $K \in \mathcal{K}$: 
$\dot x(t) = (A+BK)x(t)$ in continuous time
or
$x_{t+1} = (A+BK)x_t$ in discrete time. The initial state is random with zero mean and covariance $\mathbb{E}[x_0 x_0^\tr] = W \succeq 0$. Since $A+BK$ is stable, the instantaneous covariance $\mathbb{E}[x(t)x(t)^\tr]$ (or $\mathbb{E}[x_t x_t^\tr]$) converges to zero and the accumulated covariance,
\begin{equation} \label{eq:second-order-moment-state} 
X_K := \int_{0}^{\infty} \mathbb{E}[x(t)x(t)^\tr]\,dt, \quad \text{(CT)} \qquad
X_K := \sum_{t=0}^{\infty} \mathbb{E}[x_t x_t^\tr], \quad \text{(DT)}
\end{equation}
is well defined and finite, which coincides with the unique solution to the Lyapunov equation~\cref{eq:LQR_unif_Lyapunov}.  

In the sense of \cref{eq:second-order-moment-state}, $X_K$ quantifies the total state covariance accumulated over time due to the initial randomness. The condition $X_K \succ 0$ implies that this accumulated covariance \cref{eq:second-order-moment-state} is nondegenerate in every direction in the state space. 
With this interpretation, we present the following sufficient conditions for \cref{assumption:X_positive-definite}. 
\begin{proposition}\label{proposition:X_positive}
In both continuous-time and discrete-time settings, suppose \cref{assumption:stabilizable} holds.
Then, \cref{assumption:X_positive-definite} is satisfied if one of the following conditions holds:
\begin{enumerate}
    \item The covariance matrix $W$ is positive definite.
    \item The pair $(A,B)$ is controllable, and $\operatorname{Im}(B)\subseteq \operatorname{Im}(W)$ holds.
\end{enumerate}
\end{proposition}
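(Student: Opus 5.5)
Assumption~\ref{assumption:X_positive-definite} has two directions. For the forward direction we take $K\in\mathcal{K}$ and show $X_K\succ 0$. For the converse we take any $K$ for which the Lyapunov equation \cref{eq:LQR_unif_Lyapunov} has a solution $X_K\succ 0$ and show $K\in\mathcal{K}$. We treat the forward direction first, with one argument for each of the two conditions, and then the converse, which works uniformly.

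\emph{Forward direction, condition 1.} Let $K\in\mathcal{K}$. By the series representation, $X_K=\int_0^\infty e^{(A+BK)t}We^{(A+BK)^\tr t}dt$ in continuous time and $X_K=\sum_{t\ge0}(A+BK)^tW((A+BK)^\tr)^t$ in discrete time. In discrete time the $t=0$ term gives $X_K\succeq W\succ0$ directly. In continuous time, a vector $v$ with $v^\tr X_Kv=0$ would satisfy $W^{1/2}e^{(A+BK)^\tr t}v=0$ for all $t$. Setting $t=0$ gives $W^{1/2}v=0$, so $v=0$.

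\emph{Forward direction, condition 2.} Suppose $v^\tr X_K v=0$. Then $W^{1/2}((A+BK)^\tr)^k v=0$ for every $k\ge0$. In continuous time this follows by differentiating the identity above at $t=0$; in discrete time it is read off term by term. Since $\ker W^{1/2}=\ker W=\operatorname{Im}(W)^\perp\subseteq\operatorname{Im}(B)^\perp=\ker B^\tr$, we get $B^\tr((A+BK)^\tr)^kv=0$ for all $k$. We now show by induction that $B^\tr(A^\tr)^kv=0$ for all $k$. The key step is to expand $(A+BK)^\tr=A^\tr+K^\tr B^\tr$. Assuming $B^\tr (A^\tr)^j v=0$ for $j<k$, the terms of $B^\tr((A+BK)^\tr)^k v$ that contain at least one factor $K^\tr B^\tr$ all vanish. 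The only surviving term is $B^\tr(A^\tr)^kv$, which is therefore zero. This is the standard invariance of controllability under state feedback. Controllability of $(A,B)$ then forces $v=0$.

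\emph{Converse direction.} Suppose $X=X_K\succ0$ solves the equation and let $(A+BK)^\tr v=\lambda v$ with $v\neq0$. Multiply the equation by $v^*$ on the left and $v$ on the right.
\begin{itemize}
\item In continuous time this gives $2\operatorname{Re}(\lambda)\,v^*Xv=-v^*Wv\le0$. Hence $\operatorname{Re}\lambda\le0$.
\item In discrete time it gives $(|\lambda|^2-1)v^*Xv=-v^*Wv\le0$. Hence $|\lambda|\le1$.
\end{itemize}

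The main obstacle is excluding the boundary case, where $v^*Wv=0$ and $\lambda$ lies on the stability boundary. Under condition 1 this is immediate, since $W\succ0$ gives $v^*Wv>0$, so the inequalities are strict. Under condition 2 we argue as follows. From $v^*Wv=0$ and $\operatorname{Im}B\subseteq\operatorname{Im}W$ we get $B^\tr v=0$. Then $(A+BK)^\tr v=A^\tr v=\lambda v$ with $B^\tr v=0$. By the PBH test this contradicts controllability. Indeed, stabilizability alone already rules it out, since $\lambda$ is not strictly stable.

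Hence every eigenvalue is strictly stable and $K\in\mathcal{K}$. Note that uniqueness of $X_K$ is not needed for this direction: any positive definite solution suffices.
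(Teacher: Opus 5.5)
Your proof is correct and follows essentially the same route as the paper: the integral/series representation of $X_K$ for $K\in\mathcal{K}\Rightarrow X_K\succ0$, and multiplying the Lyapunov equation by an eigenvector of $(A+BK)^\tr$ for the converse. The difference is only in packaging. The paper first notes that either condition makes $(A+BK,W^{1/2})$ controllable for every $K$ and then applies the classical Lyapunov characterization. You instead unroll that controllability directly, via $\ker W\subseteq\ker B^\tr$ and feedback invariance, and in the converse you handle the case $v^*Wv=0$ with the PBH test on $(A,B)$, correctly observing that stabilizability alone suffices there.
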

\begin{proof}
If either condition 1) or condition 2) holds, we know that 
$(A+BK,W^{1/2})$ is controllable for any $K\in\mathbb{R}^{m\times n}$.
Thus, it suffices to show that
when $(A+BK,W^{1/2})$ is controllable,
we have $A+BK$ is stable (i.e.,
$K\in\mathcal{K}$) if and only if the Lyapunov equation \cref{eq:LQR_unif_Lyapunov} admits 
a unique positive definite solution $X_K$. This equivalence is standard in classical control theory; see \cite[Th. 5.3.2. (b)]{lancaster1995algebraic} for the continuous-time case and \cite[Th. 5.3.5. (b)]{lancaster1995algebraic} for the discrete-time case. We present further details in \cref{subsection:proof_X_PD}. 
\end{proof}

As mentioned after \Cref{theorem:global_gradient_dominance}, the positive definiteness of $W$ is the most common condition to establish gradient dominance in both continuous-time \cite{mohammadi2019global,bu2020policy,fatkhullin2021optimizing} and discrete-time \cite{fazel2018global,bu2019lqr} LQRs. The second condition in \Cref{proposition:X_positive} in fact allows $W$ to be positive semidefinite, but the accumulated covariance \cref{eq:second-order-moment-state} is still guaranteed to be positive definite thanks to the controllability of $(A+BK, W^{1/2})$. Thus, \Cref{theorem:gradient_dominance} holds for some LQR instances even when $W \succeq 0$. The case of $W \succeq 0$ has been further discussed in detail for continuous-time LQR in our prior work \cite{watanabe2025revisiting}.

We here illustrate the two conditions in \Cref{proposition:X_positive} for the discrete-time LQR below.

\begin{figure}
\begin{subfigure}{0.48\columnwidth}
  \centering
\includegraphics[width=0.7\columnwidth]{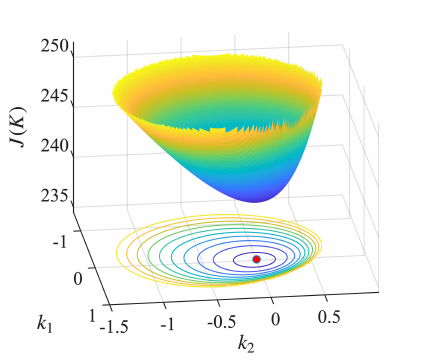}
\caption{
\cref{example:PL_Q-PSD_AB-stabilizable}
}
\label{fig:PL_Q-PSD_AB-stabilizable}
\end{subfigure}
\begin{subfigure}{0.48\columnwidth}
  \centering
\includegraphics[width=0.7\columnwidth]{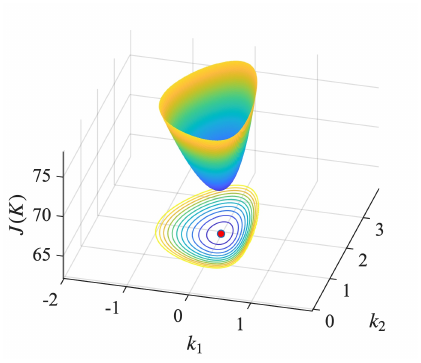}
\caption{
\cref{example:PL_WQ-PSD}
}
\label{fig:PL_WQ-PSD}
\end{subfigure}%
\caption{
Gradient dominance in the discrete-time LQR:
(a) Landscape in \cref{example:PL_Q-PSD_AB-stabilizable}, where 
$W\succ0$ but $Q\nsucc 0$ and $(A,B)$ is only stabilizable;
(b) Landscape in \cref{example:PL_WQ-PSD},
where $(A,B)$ is controllable, and both $Q$ and $W$ are only positive semidefinite. 
In both figures, the red dots  denote the optimal gain $K^\star$.
}
\label{fig:fig1}
\end{figure}  

\begin{example}[Global gradient dominance with $W\succ 0$]\label{example:PL_Q-PSD_AB-stabilizable}
    Consider the discrete-time system with
    \begin{align*}
        A = \begin{bmatrix}
            0.9 & 0.01\\
            0.01 & 0.9
        \end{bmatrix},\quad
        B = \begin{bmatrix}
            0.1\\0.1
        \end{bmatrix},\quad
        Q = \begin{bmatrix}
            1 & -1\\
            -1 & 1
        \end{bmatrix}\succeq0,\quad
        R=0.1, \quad
        W = 25I_2.
    \end{align*}
    In this example, $(A,B)$ is only stabilizable but is uncontrollable.
    The matrix $Q$ is only positive semidefinite, and we have 
    $W\succ0$.
    We plot
    the LQR cost $J$ in
    \cref{fig:PL_Q-PSD_AB-stabilizable}, which shows a strongly-convex-like landscape, 
    as expected from the global gradient dominance result in \cref{theorem:global_gradient_dominance}. \hfill$\square$
\end{example}

\begin{example}[Local gradient dominance with $Q\nsucc0$ and $W\nsucc 0$]\label{example:PL_WQ-PSD}
In discrete-time, consider 
    \begin{align*}
   A = \begin{bmatrix}
        0.9 & 0.01\\
        0.01 &- 0.9
    \end{bmatrix},\quad
    B=
    \begin{bmatrix}
      0.1\\0.1  
    \end{bmatrix},\quad
    Q =\begin{bmatrix}
        0 &0\\
        0&1
    \end{bmatrix}
    ,\quad 
    R=1,\quad
    W 
    =25\begin{bmatrix}
        1 & 1\\
        1 & 1
    \end{bmatrix}.
    \end{align*}
    Now $(A,B)$ is controllable, and the second sufficient condition of \cref{proposition:X_positive} holds,
    which ensures \cref{assumption:X_positive-definite}.
    Moreover, $(Q^{1/2},A)$ is detectable, but 
    $Q$ is not positive definite.
    We present the plot of
    $J$ in
    \cref{fig:PL_WQ-PSD}.
    We still observe its benign landscape, where gradient dominance indeed holds at least locally, as expected from \cref{Corollary:gradient-dominance-compact}. \hfill$\square$
\end{example}

\cref{assumption:X_positive-definite} also implies uniqueness of the optimal LQR gain in both continuous-time and discrete-time settings. The proof is based on stationary-point characterizations, and we provide some details in \cref{subsection:proof_uniqueness}. 

\begin{proposition}\label{proposition:uniqueness}
    Consider the LQR policy optimization problem \cref{eq:LQR_PO_unif}. The following statements hold in both continuous-time and discrete-time systems.
    \begin{enumerate}
        \item Under \Cref{assumption:stabilizable}, the problem \cref{eq:LQR_PO_unif} admits at least one optimal gain, though the optimal solution may be non-unique.
\item If, in addition, \Cref{assumption:X_positive-definite} holds, then the optimal gain 
$K^\star$  is unique. 
    \end{enumerate}
\end{proposition}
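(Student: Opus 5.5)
For part 1 I would combine \cref{lemma:LQR-unification} with \cref{theorem:LQR_solution_continuous-time,theorem:LQR_solution_discrete-time}. Under \cref{assumption:stabilizable}, the Riccati equations \cref{eq:CARE,eq:DARE} admit a unique stabilizing positive semidefinite solution $P^\star$. The associated gain $K^\star\in\mathcal{K}$ is optimal over all causal inputs, and hence in particular over all linear stabilizing feedbacks. So \cref{eq:LQR_PO_unif} attains its infimum at $K^\star$, and the minimum exists.

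Non-uniqueness can occur when $W$ is singular. Two gains that agree on the reachable subspace of $(A+BK,W^{1/2})$ generate identical trajectories from every $x_0\in\operatorname{Im}(W)$, and therefore have the same cost. Agreement on that subspace does not force $K_1=K_2$. A short example, such as $W$ of rank one in the uncontrollable direction, would illustrate this remark. It is not needed for the existence claim.

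For part 2 I would use a stationary-point characterization. First I would write the gradient in unified form. Let $P_K$ solve the adjoint Lyapunov equation, $(A+BK)^\tr P_K+P_K(A+BK)+Q+K^\tr RK=0$ in continuous time, or $(A+BK)^\tr P_K(A+BK)-P_K+Q+K^\tr RK=0$ in discrete time. Then $J(K)=\langle P_K,W\rangle$, and a standard perturbation computation gives
\[
\nabla J(K)=2\,(RK+B^\tr P_K)X_K \quad\text{(CT)},\qquad
\nabla J(K)=2\,\big((R+B^\tr P_KB)K+B^\tr P_KA\big)X_K \quad\text{(DT)}.
\]
Next, let $\hat K$ be any optimal gain. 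It lies in the open set $\mathcal{K}$ and is a minimizer, so $\nabla J(\hat K)=0$. \Cref{assumption:X_positive-definite} gives $X_{\hat K}\succ0$, so the bracketed factor must vanish: $\hat K=-R^{-1}B^\tr P_{\hat K}$ in continuous time, or $\hat K=-(R+B^\tr P_{\hat K}B)^{-1}B^\tr P_{\hat K}A$ in discrete time.

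Substituting this expression for $\hat K$ into the adjoint Lyapunov equation turns it into the Riccati equation \cref{eq:CARE} or \cref{eq:DARE}. Since $P_{\hat K}\succeq0$, uniqueness of the positive semidefinite Riccati solution gives $P_{\hat K}=P^\star$, and therefore $\hat K=K^\star$.

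The main obstacle I expect is making the gradient derivation and the Riccati substitution genuinely unified through $\Psi$, rather than treating the two time models separately. In discrete time the substitution uses the completion-of-squares identity $A_K^\tr PA_K+K^\tr RK=A^\tr PA-A^\tr PB(R+B^\tr PB)^{-1}B^\tr PA+(K-K_P)^\tr(R+B^\tr PB)(K-K_P)$. With $K=K_P$ the last term vanishes and the equation reduces to \cref{eq:DARE}. If a unified derivation becomes awkward, I would simply handle the two cases separately; both are short.

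I also need to confirm that the uniqueness statements in \cref{theorem:LQR_solution_continuous-time,theorem:LQR_solution_discrete-time} refer to uniqueness among positive semidefinite solutions under detectability. Then no stabilizing property of $P_{\hat K}$ has to be checked separately.
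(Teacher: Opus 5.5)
Your proposal is correct and follows essentially the same route as the paper. Existence comes from the Riccati-based optimal-control theorems, with non-uniqueness illustrated by a singular-$W$ example. Uniqueness comes from showing that, under \Cref{assumption:X_positive-definite}, the stationarity condition $\nabla J(\hat K)=0$ forces $\hat K$ to be the Riccati-type gain built from $P_{\hat K}$, which turns the adjoint Lyapunov equation into the CARE/DARE and gives $P_{\hat K}=P^\star$. Your explicit observation that $P_{\hat K}\succeq 0$ (stable closed loop and $Q+\hat K^\tr R\hat K\succeq 0$) is exactly what licenses using uniqueness of the positive semidefinite Riccati solution, a point the paper leaves implicit.
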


When only \Cref{assumption:stabilizable} is imposed, the optimal gain obtained from the Riccati equations in \Cref{theorem:LQR_solution_continuous-time,theorem:LQR_solution_discrete-time} corresponds to one particular optimal solution to \cref{eq:LQR_PO_unif}, but additional optimal gains may exist. In particular, if $X_K$ in \cref{eq:second-order-moment-state} is only positive semidefinite, there exist state directions that contribute zero accumulated energy to the cost. Consequently, perturbations of the feedback gain along these directions do not affect the LQR objective, leading to flat directions in the optimization landscape and possibly multiple stabilizing gains achieving the same optimal value. 

In contrast, \Cref{assumption:X_positive-definite} eliminates such degeneracies by ensuring that all directions contribute strictly to the cost. This restores strict curvature around the optimum and guarantees that the first-order stationarity admits a unique solution, thereby ensuring the uniqueness of the optimal LQR gain in both continuous- and discrete-time. %
We present two explicit LQR instances below.

\begin{figure}
\begin{subfigure}{0.48\columnwidth}
  \centering
\includegraphics[width=0.7\columnwidth]{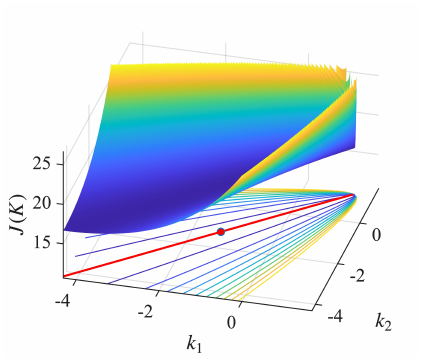}
\caption{
Continuous-time
(\cref{example:PL_fail_ct})
}
\label{fig:PL_fail_ct}
\end{subfigure}
\begin{subfigure}{0.48\columnwidth}
  \centering
\includegraphics[width=0.7\columnwidth]{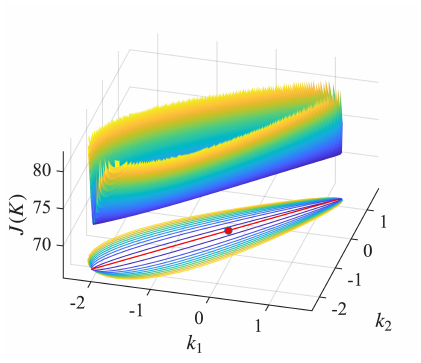}
\caption{
Discrete-time
(\cref{example:PL_fail_dt})
}
\label{fig:PL_fail_dt}
\end{subfigure}%
\caption{
Non-unique optimal LQR gains when \cref{assumption:X_positive-definite} fails:
(a) The continuous-time case in \cref{example:PL_fail_ct};
(b) The discrete-time case in \cref{example:PL_fail_dt}. 
In both cases, the optimal value of $J$ is achieved at any points over the red line, indicating the non-uniqueness of the optimal LQR gains.
}
\label{fig:saddle_point}
\end{figure}

\begin{example}[Non-unique optimal LQR gains in continuous time]\label{example:PL_fail_ct}
    Consider the following problem data for the continuous-time LQR \cref{eq:LQR-H2-ct}:
    \begin{align*}
    A = -\begin{bmatrix}
        2 & 0.5\\
        0.5 &2
    \end{bmatrix},\quad
    B=
    \begin{bmatrix}
      1\\1  
    \end{bmatrix},\quad
    Q =I_2
    ,\quad
    R=1,\quad
     W 
    =25\begin{bmatrix}
        1 & -1\\
        -1 & 1
    \end{bmatrix}.
    \end{align*}
 It is clear that $(A,B)$ is stabilizable, $(Q^{1/2},A)$ is observable, $W\succeq0, R\succ 0$. Thus  \cref{assumption:stabilizable} is satisfied. However, 
    \cref{assumption:X_positive-definite} fails because $X_K$ is only positive semidefinite in \Cref{eq:LQR_unif_Lyapunov} (since $(A,B)$ is stabilizable but not controllable). In this case, the optimal LQR gain to \cref{eq:LQR-H2-ct} is indeed not unique. \cref{fig:PL_fail_ct} shows that there exists a flat region corresponding to all optimal gains. The gain obtained from \Cref{theorem:LQR_solution_continuous-time} is one particular optimal solution to \cref{eq:LQR_PO_unif}, marked by the red point in \cref{fig:PL_fail_ct}. 
    \hfill$\square$
\end{example}
\begin{example}[Non-unique optimal LQR gains in discrete time]\label{example:PL_fail_dt}
    We apply the Euler-discretization to the problem data in \cref{example:PL_fail_ct}.
    Namely, 
    for the discretization step $\Delta t$,
    consider
    \begin{align*}
    A =I_2- \Delta t \begin{bmatrix}
        2 & 0.5\\
        0.5 &2
    \end{bmatrix},\quad
    B=\Delta t \begin{bmatrix}
        1\\1
    \end{bmatrix}
    \end{align*}
    with the same $Q$, $W$, and $R$.
    In this case,
    \Cref{assumption:stabilizable} holds but \cref{assumption:X_positive-definite} still fails since $X_K$ is only positive semidefinite in \Cref{eq:LQR_unif_Lyapunov} (again, this is because  
    $(A,B)$ is stabilizable but uncontrollable). 
    \cref{fig:PL_fail_dt} shows the discrete-time LQR landscape for 
    the case of $\Delta t=0.3$. We indeed observe the non-uniqueness of its optimal solutions to \cref{eq:LQR_PO_unif} in this discrete-time setting.
    \hfill$\square$
\end{example}

%% file: sections/section-4.tex
 \section{A unified proof based on convex reformulation}\label{section:proof_PL}

This section presents the proof of \cref{theorem:gradient_dominance}
 by leveraging a hidden convex structure of the LQR.
We first give a proof sketch using 
two key inequalities that lower- and upper-bound $J-J^\star$ in \cref{subsection:proof-sketch}. 
We then introduce a convex reformulation with a (non-standard) change of variables in \cref{subsection:convex_reformulation}.
We finally prove the key inequalities using the convex reformulation in \cref{subsection:proof-lemma-1}.

\subsection{Proof of \Cref{theorem:gradient_dominance} via two key inequalities}\label{subsection:proof-sketch}

Our proof strategy leverages the well-known hidden convexity in LQR problems. 
To motivate this, it is known that any strongly convex function $f:\mathbb{R}^n \to \mathbb{R}$ satisfies the gradient dominance property; see e.g., \cite[Th. 3.1]{liao2024error}. Indeed, for a strongly convex function $f$
and its optimal value $f^\star$, it is easy to establish the following lower- and upper-bounds for $f-f^\star$:
\begin{subequations} \label{eq:convex-inequalities}
    \begin{align}
        f(x) - f^\star &\geq \frac{\alpha}{2} \|x-x^\star\|^2, \qquad \forall x \in \mathbb{R}^n  \label{eq:QG-strongly-convex}\\
        f(x) - f^\star &\leq \langle \nabla f(x),x - x^\star \rangle, \qquad \forall x \in \mathbb{R}^n \label{eq:gradient-convex}
    \end{align}
\end{subequations}
for some constant $\alpha> 0$, where $x^\star$ is the unique minimizer of $f$. The property \Cref{eq:QG-strongly-convex} is also known as \textit{quadratic growth}, which indicates that the function grows at least as fast as a quadratic. The property \Cref{eq:gradient-convex} is a direct consequence of convexity: $f^\star \geq f(x) + \langle \nabla f(x), x^\star - x\rangle$. We then have 
\begin{align} \label{eq:QG-strongly-convex-1}
f(x) - f^\star \leq  \| \nabla f(x)\|\|x - x^\star\|  
\leq  \| \nabla f(x)\|\sqrt{\frac{2}{\alpha}} \sqrt{ f(x) - f^\star }
\end{align}
where the first inequality follows from the Cauchy-Schwarz inequality for \cref{eq:gradient-convex} and the second inequality applies the quadratic growth \cref{eq:QG-strongly-convex}. We thus have the gradient dominance
$
\frac{\alpha}{2} (f(x) - f^\star) \leq \| \nabla f(x)\|^2, \, \forall x \in \mathbb{R}^n 
$
for any strongly convex function.

Thanks to the hidden convexity, we show that the LQR problems in both continuous-time and discrete-time satisfy two inequalities that slightly generalize \Cref{eq:convex-inequalities}. We have the following key lemma.

\begin{lemma} \label{lemma:LQR-two-key-inequalities}
    Consider the LQR policy optimization problem \cref{eq:LQR_PO_unif}.
Under 
\cref{assumption:stabilizable,assumption:X_positive-definite}, for any stabilizing gain $K \in \mathcal{K}$, 
the optimality gap $J-J^\star$ is lower- and upper-bounded as
\begin{subequations} \label{eq:Jlqr_two-key-inequality}
\begin{align}
J(K)-J^\star  &\geq \left\langle
    R,(K-K^\star)X_K(K-K^\star)^\tr\right\rangle, \label{eq:Jlqr_QG_inprod} \\
    J(K)-J^\star 
    &\leq 
    \langle 
    \nabla J(K) X_K^{-1}X^\star, K-K^\star
    \rangle. \label{eq:Jlqr_weakPL_inprod}
\end{align}
\end{subequations}
Here, $X_K$  denotes the solution to the Lyapunov equation \cref{eq:LQR_unif_Lyapunov} associated with $K$, and $X^\star$ denotes the corresponding Lyapunov solution for the optimal LQR gain $K^\star$.
\end{lemma}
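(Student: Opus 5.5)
The plan is to prove the two inequalities separately: \cref{eq:Jlqr_QG_inprod} by a unified completion-of-squares identity, and \cref{eq:Jlqr_weakPL_inprod} by a convexity argument in lifted variables $(Y,X)=(KX,X)$, differentiated along a curve in $K$-space.

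\emph{Lower bound \cref{eq:Jlqr_QG_inprod}.} Let $\Psi^*$ be the adjoint of the linear map $X\mapsto \Psi\big([A+BK;\,I]X[A+BK;\,I]^\tr\big)$. Let $P_K\succeq 0$ solve the dual Lyapunov equation, i.e.\ $(A+BK)^\tr P+P(A+BK)+Q+K^\tr RK=0$ in continuous time, or $(A+BK)^\tr P(A+BK)-P+Q+K^\tr RK=0$ in discrete time. Pairing with $X_K$ gives $J(K)=\langle P_K,W\rangle$, and $J^\star=\langle P^\star,W\rangle$ with $P^\star$ from \cref{eq:CARE} or \cref{eq:DARE}. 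Using the Riccati equation, I will verify that $\Delta=P_K-P^\star$ satisfies the same closed-loop Lyapunov equation with source term $(K-K^\star)^\tr E(K-K^\star)$. Here $E=R$ in continuous time and $E=R+B^\tr P^\star B\succeq R$ in discrete time. Pairing this equation with $X_K$ and using the primal equation $\Psi(\cdot)+W=0$ gives
\[
J(K)-J^\star=\langle \Delta,W\rangle=\langle E,(K-K^\star)X_K(K-K^\star)^\tr\rangle\ge \langle R,(K-K^\star)X_K(K-K^\star)^\tr\rangle .
\]
By \cref{proposition:uniqueness}, $K^\star$ is unique, so the Riccati gain is the $K^\star$ of the statement.

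\emph{Upper bound \cref{eq:Jlqr_weakPL_inprod}.} Set $Y=KX$ and $f(Y,X)=\langle Q,X\rangle+\langle R,YX^{-1}Y^\tr\rangle$ on $X\succ0$; $f$ is jointly convex (perspective / Schur complement). Let $\mathcal{C}$ be the set of $(Y,X)$ with $X\succ0$ and $\Psi\big([AX+BY;\,X]X^{-1}[AX+BY;\,X]^\tr\big)+W\preceq 0$. In continuous time this is linear; in discrete time it reads $(AX+BY)X^{-1}(AX+BY)^\tr-X+W\preceq0$, which is convex by Schur. Both $(Y_0,X_0)=(KX_K,X_K)$ and $(Y^\star,X^\star)=(K^\star X^\star,X^\star)$ lie in $\mathcal{C}$, with $X_K,X^\star\succ0$ by \cref{assumption:X_positive-definite}. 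For $t\in[0,1]$ let $(Y_t,X_t)$ be their convex combination, which lies in $\mathcal{C}$, and set $K_t=Y_tX_t^{-1}$. Convexity gives
\[
f(Y_t,X_t)\le (1-t)J(K)+tJ^\star .
\]
The claim is $J(K_t)\le f(Y_t,X_t)$. This holds if $K_t\in\mathcal{K}$, because then $X_t$ satisfies the Lyapunov inequality for $K_t$, so $X_t\succeq X_{K_t}$ by comparison, and $f(Y_t,X_t)=\langle Q+K_t^\tr RK_t,X_t\rangle\ge J(K_t)$. It follows that
\[
J(K_t)-J(K)\le t\,(J^\star-J(K)).
\]
Dividing by $t$ and letting $t\downarrow0$ uses the differentiability of $J$ and the derivative
\[
\dot K_0=(Y^\star-Y_0)X_K^{-1}-KX_K^{-1}(X^\star-X_K)X_K^{-1}\cdot X_K X_K^{-1}=(K^\star-K)X^\star X_K^{-1}.
\]
This gives $\langle\nabla J(K),(K^\star-K)X^\star X_K^{-1}\rangle\le J^\star-J(K)$, which rearranges (with $X_K,X^\star$ symmetric) to \cref{eq:Jlqr_weakPL_inprod}.

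\emph{Main obstacle.} The delicate step is showing $K_t\in\mathcal{K}$ for small $t>0$. Because $W$ may only be semidefinite, the Lyapunov inequality with $X_t\succ0$ does not immediately give strict stability. For small $t$, however, $K_t$ is a continuous perturbation of $K\in\mathcal{K}$, and $\mathcal{K}$ is open, so $K_t\in\mathcal{K}$ for all sufficiently small $t$. That is all the limit argument needs. For the comparison $X_t\succeq X_{K_t}$, I subtract the two relations: the difference satisfies the closed-loop Lyapunov equation with a $\succeq0$ source term, hence is $\succeq0$ by stability of $A+BK_t$.
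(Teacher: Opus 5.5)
Your proof is correct, and for both inequalities it takes a different route from the paper's main argument.

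For the lower bound, the paper stays in the lifted convex problem with $Y=(K-K^\star)X$ and obtains partial quadratic growth from the first-order optimality of $(0,X^\star)$. Your Riccati-based completion of squares is the alternative the paper only sketches in Remark~\ref{remark:QG-LQR}. It yields an exact identity, with the sharper weight $R+B^\tr P^\star B$ in discrete time. It also uses Assumption~\ref{assumption:X_positive-definite} only to identify $K^\star$ with the Riccati gain. The price is that it relies on the Riccati equations and separate CT/DT algebra, whereas the paper's argument is uniform in the time model.

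For the upper bound, the paper constructs a full convex subgradient of $\tilde f_{\mathrm{cvx}}$ at $(Y_K,X_K)$ from $\nabla J(K)$. It does so via a Fr\'{e}chet-subgradient lemma for $J(K)=\min_X J_{\mathrm{lft}}(K,X)$, the nonsmooth chain rule of Rockafellar--Wets, and the Jacobian of $\Upsilon^{-1}$. You instead restrict to the segment joining $(KX_K,X_K)$ and $(K^\star X^\star,X^\star)$ and pull it back through $K_t=Y_tX_t^{-1}$. You then sandwich $J(K_t)\le f(Y_t,X_t)\le (1-t)J(K)+tJ^\star$ for small $t$ and differentiate at $t=0$. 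This uses only joint convexity, the Lyapunov comparison principle, openness of $\mathcal{K}$, and the ordinary chain rule along a smooth curve. All the lower-semicontinuity and regular-subgradient machinery therefore disappears.

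Nothing is lost by this simplification. Running your argument with an arbitrary point of $\mathcal{C}$ in place of $(K^\star X^\star,X^\star)$ gives exactly the subgradient inequality. So it recovers the paper's Proposition~\ref{proposition:fcvx_subgradient_J} in the coordinates $Y=KX$, which differ from the paper's only by the shift $K^\star X$.

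One small slip: the middle expression for $\dot K_0$ should read $(Y^\star-Y_0)X_K^{-1}-Y_0X_K^{-1}(X^\star-X_K)X_K^{-1}$. Your final formula $(K^\star-K)X^\star X_K^{-1}$ is correct.
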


Even though the LQR policy optimization problem \cref{eq:LQR_PO_unif} is nonconvex, a classical change of variables leads to a convex problem that satisfies quadratic growth. We then leverage the convex properties to establish the desired inequalities \cref{eq:Jlqr_two-key-inequality}. We present the convex reformulation in \Cref{subsection:convex_reformulation} and establish the proof of \Cref{lemma:LQR-two-key-inequalities} in \Cref{subsection:proof-lemma-1}. 
We note that the quadratic growth \cref{eq:Jlqr_QG_inprod} is true with only \cref{assumption:stabilizable}; see our discussions after the proof of \Cref{lemma:LQR-two-key-inequalities} in \Cref{remark:QG-LQR}.

With \Cref{lemma:LQR-two-key-inequalities}, we are now ready to establish \cref{theorem:gradient_dominance}, following almost the same steps as the case of strongly convex functions in \cref{eq:QG-strongly-convex-1}. 

\textbf{Proof of \cref{theorem:gradient_dominance}:} The inequality \cref{eq:Jlqr_QG_inprod} implies the following lower bound of $J-J^\star$:
\begin{equation} \label{eq:LQR-QG}
    J(K)-J^\star  \geq \lambda_{\min}(R)\lambda_{\min}(X_K)\|K - K^\star\|_F^2.
\end{equation}
From \Cref{eq:Jlqr_weakPL_inprod}, we have the following upper bound of $J-J^\star$:
\begin{equation} \label{eq:LQR-weakPL}
\begin{aligned}
    J(K)-J^\star 
    &\leq  
    \frac{\lambda_{\max}(X^\star)}{\lambda_{\min}(X_K)} 
    \times
    \|K-K^\star \|_F\|\nabla J(K)\|_F.
\end{aligned}
\end{equation}
Combining \cref{eq:LQR-QG} and \cref{eq:LQR-weakPL}, we get 
\begin{equation} \label{eq:LQR-error-bound}
    \|K-K^\star\|_F
     \leq 
     \frac{\lambda_\mathrm{max}(X^\star)}{\lambda_\mathrm{min}(R)
     \lambda_\mathrm{min}(X_K)^2}
    \|\nabla J(K)\|_F.
\end{equation} 
Substituting \cref{eq:LQR-error-bound} into \cref{eq:LQR-weakPL} yields the inequality \Cref{eq:gradient_dominance_thm} in \cref{theorem:gradient_dominance}. This completes the proof. \hfill $\square$

\begin{remark}[Hidden convexity]\label{remark:hidden_convex_functions}
We say a nonconvex function $f$ has \textit{hidden convexity} if it admits a convex reformulation after a suitable coordinate change. In this case, we can often establish strong global properties of $f$ despite its nonconvexity. This idea of exploiting hidden convexity has been extensively discussed in recent literature   \cite{zheng2023benign,zheng2024benign,fatkhullin2025stochastic,mohammadi2019global,sun2021learning,watanabe2025policy,watanabe2025revisiting,watanabe2025semidefinite}. A simple illustrative case is that $f$ has the composition form $f(x) = g(c(x))$, where  $g:\mathbb{R}^{n}\to\mathbb{R}$ is smooth and convex and $c:\mathbb{R}^n\to\mathbb{R}^n$ is a smooth and invertible mapping.
We have $\nabla f(x) = \nabla c(x)^\tr \nabla g(c(x)) $, where $\nabla c(x)$ is the Jacobian of $c$. 
Moreover, for any global minimizer $x^\star$, we have the bound
\begin{equation} \label{eq:hidden-convex-f}
\begin{aligned}
f(x) - f(x^\star) = g(c(x)) - g(c(x^\star)) &\leq \langle \nabla g(c(x)), c(x) - c(x^\star) \rangle \\
& = \langle (\nabla c(x)^\tr)^{-1} \nabla f(x), c(x) - c(x^\star) \rangle, 
\end{aligned} 
\end{equation}
where the inequality follows from the convexity of $g$ (similar to \cref{eq:gradient-convex}) and the last equality uses the invertibility of the Jacobian $\nabla c(x)$. We see from \cref{eq:hidden-convex-f} that every stationary point of $f$ is globally optimal. 
The inequality \cref{eq:hidden-convex-f} has a strong resemblance to the key inequality \cref{eq:Jlqr_weakPL_inprod} used in the LQR analysis. Indeed, the LQR problem \cref{eq:LQR_unif} admits an analogous hidden convex structure, though with the presence of an additional Lyapunov variable. Our proof of \cref{lemma:LQR-two-key-inequalities} builds on this idea by carefully exploiting such hidden convexity under Lyapunov constraints. As a result, our analysis requires a nonsmooth version of the chain rule used in \cref{eq:hidden-convex-f}. 
\hfill $\square$
\end{remark}

\subsection{A convex reformulation of continuous- and discrete-time LQRs}\label{subsection:convex_reformulation}

It is well known that both continuous-time and discrete-time LQR problems admit equivalent convex reformulations via a change of variables. While the convexification shares great similarities, there is one subtle difference between the continuous-time and discrete-time LQRs: the equivalent form \cref{eq:LQR_unif} can be directly convexified in the continuous-time case, but not in the discrete-time case. We present a simple example to illustrate this point.

\begin{example}[The discrete-time Lyapunov equation cannot be directly convexified]\label{example:lyapunov_eq_convex}
Consider the integrator: $\dot{x}=u$ (in continuous time) and $x_{t+1}=x_t+u_t$ (in discrete-time) with $W=\mathbb{E}[x_0^2]=1$.
We then have $\mathcal{K}_\ct=(-\infty,0)$ and $\mathcal{K}_\dt=(-2,0)$. 
For these systems with $u=kx\,(k\in\mathcal{K})$, the Lyapunov equations \cref{eq:LQR_unif_Lyapunov} are reduced to
$2kX+1=0$ (in CT) and $(k^2-1)X +1=0$ (in DT), respectively.
Now, consider the standard change of variables
$y=kX$ with $X>0$.
For the continuous-time case, the Lyapunov equation is reduced to the linear equation $2y+1 =0$.
However, in discrete-time,
we obtain the nonlinear equation $y^2/X-X+1=0$ with $X>0$, which is a nonconvex equality constraint.
One modification to uniformly convexify both cases is to consider Lyapunov inequalities
$2kX+1\leq 0$ and $(k^2-1)X+1 \leq 0$.
Indeed, the convexity is preserved in the continuous-time case.
In discrete-time, the Lyapunov inequality is reduced to $y^2/X-X+1 \leq0$,
which can be rewritten into
the following (convex) LMI by
the Schur complement since $X>0$:
\begin{equation*}
    y^2/X-X+1 \leq0\quad\Longleftrightarrow \quad
    \begin{bmatrix}
        X-1 & y\\
        y & X
    \end{bmatrix}\succeq 0. 
\end{equation*}
Thus, the 
Lyapunov inequalities can be convexified in both continuous-time and discrete-time.
\hfill$\square$
\end{example}

To unify the convexification process in both continuous-time and discrete-time, we relax the Lyapunov equation \cref{eq:LQR_unif_Lyapunov} into a Lyapunov inequality.
Consider the following problem:
\begin{subequations}\label{eq:LQR_unif_2}
\begin{align}
    \min_{K,\,X}
    \quad& 
    \langle Q+K^\tr R K,X\rangle
    \label{eq:LQR_unif_2-cost}
    \\
    \textrm{subject to}\quad&  \Psi\left(
   \begin{bmatrix}
       A+BK\\
       I
   \end{bmatrix}X
   \begin{bmatrix}
       A+BK\\
       I
   \end{bmatrix}^\tr
   \right)
    +W \preceq 0
    ,\,
    X\succ 0. \label{eq:LQR_unif_2-lyapunov}
\end{align}
\end{subequations}
Two key differences between \cref{eq:LQR_unif} and \cref{eq:LQR_unif_2} are: 1) the Lyapunov equation \cref{eq:LQR_unif_Lyapunov} becomes a Lyapunov inequality \cref{eq:LQR_unif_2-lyapunov} so the feasible region in terms of $X$ is larger in \cref{eq:LQR_unif_2-lyapunov}; 2) the nonconvex stabilizing constraint $K \in \mathcal{K}$ in \cref{eq:LQR_unif_Lyapunov} is replaced by an explicit constraint $X \succ 0$ in \cref{eq:LQR_unif_2-lyapunov}. Despite the two differences, these problems are equivalent in the following sense. A proof is given in \Cref{appendix:proof-LQR-KX-ineq}.

\begin{lemma}\label{proposition:LQR_KX_ineq}
Under \cref{assumption:stabilizable,assumption:X_positive-definite}, problems \cref{eq:LQR_unif,eq:LQR_unif_2} have the same optimal value and the same optimal gain $K^\star$ in both continuous-time and discrete-time settings.
\end{lemma}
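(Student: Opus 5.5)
We prove two inclusions of feasible sets and one inequality on costs. Since every point feasible for \cref{eq:LQR_unif} is feasible for \cref{eq:LQR_unif_2}, the optimal value of \cref{eq:LQR_unif_2} is at most $J^\star$; it remains to show the reverse inequality. The key is a monotonicity (comparison) property of the Lyapunov operator, which holds uniformly for $\Psi_\ct$ and $\Psi_\dt$.

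\textbf{Inclusion of \cref{eq:LQR_unif} into \cref{eq:LQR_unif_2}.} Take $K\in\mathcal{K}$ and its Lyapunov solution $X_K$. By \cref{assumption:X_positive-definite}, $X_K\succ0$. The equality \cref{eq:LQR_unif_Lyapunov} trivially implies the inequality in \cref{eq:LQR_unif_2-lyapunov}. Hence $(K,X_K)$ is feasible for \cref{eq:LQR_unif_2}, with the same cost $J(K)$.

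\textbf{Reverse direction.} Let $(K,X)$ be feasible for \cref{eq:LQR_unif_2}. Write $\mathcal{L}_K(X):=\Psi(\cdot)$ for the linear Lyapunov map, so that $\mathcal{L}_K(X)+W=-S$ with $S\succeq0$.

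First, $K$ is stabilizing. Suppose $v^\her (A+BK) = \lambda v^\her$ with $\operatorname{Re}\lambda\ge0$ (CT) or $|\lambda|\ge1$ (DT). Pre- and post-multiplying the inequality by $v^\her, v$ gives $2\operatorname{Re}(\lambda)\, v^\her Xv + v^\her Wv \le 0$ in CT, or $(|\lambda|^2-1)\, v^\her X v + v^\her W v\le0$ in DT. Since $X\succ0$, this forces $\operatorname{Re}\lambda=0$ (resp. $|\lambda|=1$) together with $v^\her Wv=0$ and $v^\her Sv=0$. This is the step I expect to be the main obstacle: excluding marginal eigenvalues when $W$ is only semidefinite. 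I would handle it using \cref{assumption:X_positive-definite}. Define $\tilde W := W+S\succeq W$. Then $X$ solves $\mathcal{L}_K(X)+\tilde W=0$ with $X\succ 0$. If $A+BK$ has such a left eigenvector $v$ with $v^\her \tilde W v=0$, a standard argument shows $v$ is orthogonal to the controllable subspace of $(A+BK,\tilde W^{1/2})$, and hence to that of $(A+BK,W^{1/2})$. Perturbing $K$ slightly into $\mathcal{K}$ would then produce $X_{K'}$ with a null direction near $v$, contradicting \cref{assumption:X_positive-definite}. Alternatively, one can appeal directly to the inertia theorems \cite[Th. 5.3.2, 5.3.5]{lancaster1995algebraic}. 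If this route is too delicate, a cleaner fallback is to use the assumption's "if" direction in the form "$X_K\succ0\Rightarrow K\in\mathcal{K}$", applied after showing the equation-solution exists. In any case, this yields $K\in\mathcal{K}$.

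\textbf{Cost comparison.} With $K\in\mathcal{K}$, the difference $\Delta:=X-X_K$ satisfies $\mathcal{L}_K(\Delta) = -S\preceq 0$. Stability then gives
\[
\Delta=\int_0^\infty e^{(A+BK)t}S e^{(A+BK)^\tr t}dt\succeq0
\qquad\text{or}\qquad
\Delta=\sum_t (A+BK)^tS((A+BK)^\tr)^t\succeq 0,
\]
so $X\succeq X_K$. Since $Q+K^\tr RK\succeq0$, it follows that $\langle Q+K^\tr RK,X\rangle\ge J(K)\ge J^\star$. The two optimal values therefore coincide.

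\textbf{Optimal gains.} Any minimizer $(\hat K,\hat X)$ of \cref{eq:LQR_unif_2} satisfies $J(\hat K)\le J^\star$, so $\hat K$ is optimal for \cref{eq:LQR_unif}. By \cref{proposition:uniqueness} (which uses \cref{assumption:X_positive-definite}), $\hat K = K^\star$. Conversely, $(K^\star,X^\star)$ attains the common optimal value in \cref{eq:LQR_unif_2}. Hence the two problems have the same optimal value and the same optimal gain.
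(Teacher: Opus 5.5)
Your skeleton matches the paper's proof: feasibility of $(K,X_K)$, then the comparison $X\succeq X_K$ via the integral/series of $S$, then uniqueness from \cref{proposition:uniqueness}. The paper simply asserts the step you flag, namely that a feasible (resp.\ optimal) gain of \cref{eq:LQR_unif_2} is stabilizing. However, none of your three patches proves $K\in\mathcal{K}$. Without it, $X_K$ and the series/integral formula for $X-X_K$ are unavailable, so neither the cost comparison nor the gain argument goes through.

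\emph{Perturbation.} This fails because uncontrollability of $(A+BK,W^{1/2})$ is not inherited by nearby gains. For a generic $K'\in\mathcal{K}$ near $K$, the perturbed left eigenvector $\tilde v$ leaves $\ker W$, and (in CT) $\tilde v^\her X_{K'}\tilde v=\tilde v^\her W\tilde v/(-2\operatorname{Re}\tilde\lambda)>0$. At best $\lambda_{\min}(X_{K'})\to0$ as $K'\to K$, which is compatible with \cref{assumption:X_positive-definite} because that assumption is pointwise.

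\emph{Inertia theorems.} The theorems you cite need controllability of $(A+BK,(W+S)^{1/2})$. That is exactly what fails when a marginal left eigenvector lies in $\ker(W+S)$.

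\emph{Fallback.} This misapplies the ``if'' direction. Your $X$ solves the Lyapunov equation with $W+S$, not with $W$. Nothing in your argument produces a \emph{positive definite} solution of the $W$-equation for this $K$.

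Either of the following closes the gap.

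(a) Use a \emph{structured} perturbation together with the ``only if'' direction. Suppose $K\notin\mathcal{K}$. Your eigen-argument shows every non-stable eigenvalue of $A+BK$ is marginal. Since $z^\her Xz$ is nonincreasing along trajectories of $(A+BK)^\tr$, these marginal eigenvalues are semisimple. Let the columns of a real $V$ span their left eigenvectors, so that $V^\tr(A+BK)=\Lambda V^\tr$ and $WV=0$. Stabilizability of $(A,B)$ makes $(\Lambda,V^\tr B)$ controllable: a left eigenvector $c$ of $\Lambda$ with $c^\her V^\tr B=0$ would give an uncontrollable marginal mode of $(A,B)$. So pick $F$ with $\Lambda+V^\tr BF$ stable and set $K'=K+FV^\tr$. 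Then $A+BK'$ agrees with $A+BK$ on the invariant subspace $\ker V^\tr$, which carries the strictly stable eigenvalues, so $K'\in\mathcal{K}$. Yet $\operatorname{span}(V)\subseteq\ker W$ is still left invariant, so $(A+BK',W^{1/2})$ is uncontrollable and $X_{K'}$ is singular, contradicting \cref{assumption:X_positive-definite}.

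(b) Bypass stability altogether by completing squares, as in \cref{remark:QG-LQR}. In CT the Riccati equation gives $Q+K^\tr RK=-(A+BK)^\tr P^\star-P^\star(A+BK)+(K-K^\star)^\tr R(K-K^\star)$. Hence every feasible $(K,X)$ of \cref{eq:LQR_unif_2} satisfies $\langle Q+K^\tr RK,X\rangle\ge\langle P^\star,W\rangle+\langle R,(K-K^\star)X(K-K^\star)^\tr\rangle$. In DT the same holds with $P^\star-(A+BK)^\tr P^\star(A+BK)$ and $R+B^\tr P^\star B$ in place of the corresponding terms. Since $\langle P^\star,W\rangle=J^\star$ and $X\succ0$, this gives the common optimal value and shows $K^\star$ is the only optimal gain of \cref{eq:LQR_unif_2}, with no stability argument needed.
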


From \cref{proposition:LQR_KX_ineq,lemma:LQR-unification}, we know that both continuous- and discrete-time LQR problems \cref{eq:LQR-H2} share the same optimal gain and optimal value with the inequality-constrained problem \cref{eq:LQR_unif_2} under \Cref{assumption:stabilizable,assumption:X_positive-definite}. Furthermore, the optimal gain $K^\star$ to \cref{eq:LQR_unif_2} is also unique by \cref{proposition:uniqueness}. 

We show that the inequality-constrained problem \cref{eq:LQR_unif_2} admits an equivalent convex reformulation. The classical approach is to use a change of variables $Y = KX$ that convexifies the bilinear coupling between $K$ and $X$ in \cref{eq:LQR_unif_2}. To facilitate our analysis, we use a slightly different change of variables: 
\begin{equation}\label{eq:change-of-variable_Y}
    Y=(K-K^\star)X,
\end{equation}
for any feasible $(K,X)$ in \cref{eq:LQR_unif_2}, where $K^\star$ is the unique optimal gain to \cref{eq:LQR_unif_2}. 
Note that from \cref{eq:change-of-variable_Y}, the original gain $K$ can be recovered by
$K = Y X^{-1}+K^\star$ since $X \succ 0$.
Then, we can rewrite \cref{eq:LQR_unif_2} into a convex problem involving the variables $(Y, X)$. In particular, the cost of \cref{eq:LQR_unif_2} becomes  
\begin{align*}
        \langle Q+K^\tr RK,X\rangle
    =
        &
        \langle Q,X \rangle+
        \langle R,(Y+K^\star X) X^{-1}(Y+K^\star X)^\tr\rangle
        \\
        =&
    \langle Q+(K^\star)^\tr RK^\star ,X\rangle
    +
     \langle R,YX^{-1}Y^\tr\rangle
     +2\Tr \left((RK^\star)^\tr Y\right).  
\end{align*}
We can also transform the constraint of \cref{eq:LQR_unif_2}, leading to the following problem:
\begin{subequations}\label{eq:LQR_unif_convex}
\begin{align}
\min_{Y,\,X}
    &\quad 
    \langle Q+(K^\star)^\tr RK^\star ,X\rangle
    +
     \langle R,YX^{-1}Y^\tr\rangle
     +2\Tr \left((RK^\star)^\tr Y\right)
     \label{eq:LQR_unif_convex-cost}
    \\
    \text{subject to}&\quad
    \Psi
   \left(
   \begin{bmatrix}
       (A+BK^\star)X+BY\\
       X
   \end{bmatrix}X^{-1}
   \begin{bmatrix}
       (A+BK^\star)X+BY\\
       X
   \end{bmatrix}^\tr
   \right)
    +W \preceq 0,\,
    X\succ 0.
    \label{eq:LQR_unif_convex_Lyapunov}
\end{align}
\end{subequations}

From \cref{eq:change-of-variable_Y}, let us define a mapping $\Upsilon:\mathbb{R}^{m\times n}\times\mathbb{S}_{++}^n\to \mathbb{R}^{m\times n}\times\mathbb{S}_{++}^n$ as 
    \begin{equation} \label{eq:change-of-variable-X-Y}
        \Upsilon(K,X) = \left((K-K^\star)X,X\right).
    \end{equation}
This mapping is infinitely differentiable, and so is its inverse $\Upsilon^{-1}(Y,X)=(YX^{-1}+K^\star,X)$. It is clear that problems \Cref{eq:LQR_unif_convex,eq:LQR_unif_2} are equivalent under this mapping $\Upsilon$.

\begin{lemma} \label{lemma:convex-reformulation}
Suppose \cref{assumption:stabilizable,assumption:X_positive-definite} hold.
The following statements hold for both continuous- and discrete-time settings.
\begin{enumerate}
    \item Problems \Cref{eq:LQR_unif_convex,eq:LQR_unif_2} are equivalent in the sense that the feasible regions of \cref{eq:LQR_unif_2} and \cref{eq:LQR_unif_convex} are related by the invertible mapping $\Upsilon$, i.e.,  
        \begin{equation*}
        \left\{
        (Y,X) 
        \middle|
        \text{ \cref{eq:LQR_unif_convex_Lyapunov} holds}
        \right\} = \Upsilon\left(
        \left\{
        (K,X)
        \middle|
        \text{ \cref{eq:LQR_unif_2-lyapunov} holds}
        \right\}
        \right).
        \end{equation*}
    Moreover, they have the same cost for any feasible points satisfying $(Y,X)=\Upsilon(K,X)$. 
    
    \item 
    Problem \cref{eq:LQR_unif_convex}
    is convex, i.e.,
    both
    the cost \cref{eq:LQR_unif_convex-cost} and constraints \cref{eq:LQR_unif_convex_Lyapunov} are convex. 
    Moreover,
    an optimal solution to \cref{eq:LQR_unif_convex} is given by $(0,X^\star)$, where $(K^\star,X^\star)$ is an optimal solution to \cref{eq:LQR_unif}. 
\end{enumerate}
\end{lemma}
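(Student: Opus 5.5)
Part 1 is a direct substitution argument, and Part 2 splits into convexity of the cost, convexity of the constraint set, and optimality of $(0,X^\star)$.

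\emph{Part 1.} For $(K,X)$ with $X\succ 0$, set $(Y,X)=\Upsilon(K,X)$, so that $YX^{-1}=K-K^\star$. Then
\[
\begin{bmatrix}(A+BK^\star)X+BY\\ X\end{bmatrix}X^{-1}=\begin{bmatrix}A+BK\\ I\end{bmatrix},
\]
and hence
\[
\begin{bmatrix}(A+BK^\star)X+BY\\ X\end{bmatrix}X^{-1}\begin{bmatrix}(A+BK^\star)X+BY\\ X\end{bmatrix}^\tr=\begin{bmatrix}A+BK\\ I\end{bmatrix}X\begin{bmatrix}A+BK\\ I\end{bmatrix}^\tr.
\]
Applying $\Psi$ to both sides shows that \cref{eq:LQR_unif_2-lyapunov} and \cref{eq:LQR_unif_convex_Lyapunov} are the same condition. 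Because $\Upsilon$ is a bijection on $\mathbb{R}^{m\times n}\times\mathbb{S}^n_{++}$, the feasible regions correspond. Equality of the costs is the expansion already displayed before \cref{eq:LQR_unif_convex}.

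\emph{Part 2, cost.} The first and third terms of the cost are linear in $(Y,X)$. The middle term is $\langle R,YX^{-1}Y^\tr\rangle=\Tr(R^{1/2}YX^{-1}Y^\tr R^{1/2})$, and $(Y,X)\mapsto YX^{-1}Y^\tr$ is matrix-convex on $X\succ0$. To see this, take its epigraph $\{T\succeq YX^{-1}Y^\tr\}$; by the Schur complement it equals $\{\begin{bmatrix}T&Y\\Y^\tr&X\end{bmatrix}\succeq0\}$, which is convex. The trace against $R\succ0$ preserves convexity.

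\emph{Part 2, constraints.} Write $Z=(A+BK^\star)X+BY$, which is linear in $(Y,X)$.
\begin{itemize}
\item In continuous time, $\Psi_\ct$ picks out the off-diagonal block $ZX^{-1}X=Z$, so the constraint reads $Z+Z^\tr+W\preceq0$. This is linear.
\item In discrete time, the constraint reads $ZX^{-1}Z^\tr-X+W\preceq0$ with $X\succ0$. By the Schur complement it is equivalent to the LMI $\begin{bmatrix}X-W&Z\\Z^\tr&X\end{bmatrix}\succeq0$, which is convex.
\end{itemize}
This generalizes \cref{example:lyapunov_eq_convex}.

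\emph{Part 2, optimality of $(0,X^\star)$.} By \cref{proposition:LQR_KX_ineq}, problems \cref{eq:LQR_unif,eq:LQR_unif_2} share the optimal value $J^\star$ and the optimal gain $K^\star$. The pair $(K^\star,X^\star)$ is feasible for \cref{eq:LQR_unif_2}: the Lyapunov equality implies the inequality, and $X^\star\succ0$ by \cref{assumption:X_positive-definite}. Its cost is $J(K^\star)=J^\star$, so it is optimal for \cref{eq:LQR_unif_2}. Its image under $\Upsilon$ is $((K^\star-K^\star)X^\star,X^\star)=(0,X^\star)$. By Part 1 this point is feasible for \cref{eq:LQR_unif_convex} with the same cost, and every feasible point of \cref{eq:LQR_unif_convex} is the image of a feasible point of \cref{eq:LQR_unif_2} with equal cost. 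Hence $(0,X^\star)$ is optimal for \cref{eq:LQR_unif_convex}.

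The main step is the convexity of the discrete-time constraint. It relies on the Schur complement with $X\succ0$ and on recognising that the $X^{-1}$ factors collapse under $\Psi_\dt$ to give $ZX^{-1}Z^\tr-X$.
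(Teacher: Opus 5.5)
Your proof is correct and follows essentially the paper's route. Part 1 is direct substitution through $\Upsilon$. For the constraint, continuous time is linear and discrete time uses the Schur-complement LMI with $X\succ 0$. Optimality of $(0,X^\star)$ comes from transporting $(K^\star,X^\star)$ through $\Upsilon$. The one difference is cosmetic: you get convexity of the cost from the Schur-complement epigraph of $YX^{-1}Y^\tr$, whereas the paper adds an auxiliary matrix variable bounding $(Y+K^\star X)X^{-1}(Y+K^\star X)^\tr$ from above and uses that partial minimization over it preserves convexity, which is the same idea.
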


The first statement is straightforward from the construction of \cref{eq:LQR_unif_convex}. 
For the second statement, 
it is well-known that problem \cref{eq:LQR_unif_convex} is convex.
We give some details to illustrate the convexity of \cref{eq:LQR_unif_convex} in discrete-time
in \cref{subsec-proof:convex_derivation},
where we use the Schur complement as \cref{example:lyapunov_eq_convex}.
For the continuous-time case, it is easier to see the convexity of \cref{eq:LQR_unif_convex}, since
the constraint \cref{eq:LQR_unif_convex_Lyapunov} with $\Psi=\Psi_\ct$ is an LMI.
The optimality of $(0,X^\star)$ for \cref{eq:LQR_unif_convex} is clear from the construction $\Upsilon(K^\star,X^\star)$.

We remark that the definition of $Y$ in \cref{eq:change-of-variable_Y} yields a simple optimal solution (the optimality of \cref{eq:LQR_unif_convex} with respect to $Y$ is achieved by $Y^\star=0$), which is more convenient for our subsequent analysis.

\subsection{Proof of \Cref{lemma:LQR-two-key-inequalities}} \label{subsection:proof-lemma-1}

We here prove the key inequalities in  
\cref{lemma:LQR-two-key-inequalities}. 
Our proof utilizes the convex reformulation \cref{eq:LQR_unif_convex}
and applies uniformly to both continuous-time and discrete-time settings. 

\begin{subequations}
To facilitate our discussions, let us define a convex function $f_\mathrm{cvx}(Y,X):\mathbb{R}^{m\times n}\times  \mathbb{S}_{++}^n\to\mathbb{R}$ as 
\begin{equation} \label{eq:convex-LQR-cost}
f_\mathrm{cvx}(Y,X) = \langle Q+(K^\star)^\tr RK^\star ,X\rangle
    +
     \langle R,YX^{-1}Y^\tr\rangle
     +2\Tr \left((RK^\star)^\tr Y\right)
\end{equation}
and a convex set
\begin{equation}
    \mathcal{F}_\mathrm{cvx}=\left\{(Y,X)
    \times \mathbb{R}^{m\times n}\times \mathbb{S}_{++}^n
    \middle|
    \text{ \cref{eq:LQR_unif_convex_Lyapunov} holds}
    \right\}.
\end{equation}
\end{subequations}
Then, we can represent \cref{eq:LQR_unif_convex} as $\min \, f_\mathrm{cvx}(Y,X)\; \text{subject to}\, (Y,X) \in \mathcal{F}_\mathrm{cvx}$.

\subsubsection{The lower bound \cref{eq:Jlqr_QG_inprod}}\label{subsubsection:proof_QG}

Here, we show a further property beyond convexity in problem  \cref{eq:LQR_unif_convex}. In particular, the cost $f_\mathrm{cvx}$ in \cref{eq:convex-LQR-cost} enjoys a partial quadratic growth property in $Y$. 

\begin{proposition}[Partial quadratic growth]\label{lemma:qg_Jcvx}
Suppose \cref{assumption:stabilizable,assumption:X_positive-definite} hold.
Then, we have
\begin{align}\label{eq:partial_qg}
      f_\mathrm{cvx} (Y,X)-f_\mathrm{cvx}^\star
      \geq
       \langle
    R,YX^{-1}Y^\tr
    \rangle, \quad \forall (Y,X) \in \mathcal{F}_\mathrm{cvx},
\end{align}
where $f_\mathrm{cvx}^\star$ is the optimal value of \cref{eq:LQR_unif_convex}. 
\end{proposition}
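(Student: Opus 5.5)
Since $f_\mathrm{cvx}(Y,X)-\langle R,YX^{-1}Y^\tr\rangle = g(Y,X)$ with
\[
g(Y,X) := \langle Q+(K^\star)^\tr RK^\star ,X\rangle +2\Tr\left((RK^\star)^\tr Y\right),
\]
the claim \cref{eq:partial_qg} is equivalent to $g(Y,X)\geq f_\mathrm{cvx}^\star$ for all $(Y,X)\in\mathcal{F}_\mathrm{cvx}$. The function $g$ is \emph{linear} in $(Y,X)$. We will show that $(0,X^\star)$ minimizes $g$ over the convex set $\mathcal{F}_\mathrm{cvx}$. This suffices because $g(0,X^\star)=f_\mathrm{cvx}(0,X^\star)=f_\mathrm{cvx}^\star$ by \cref{lemma:convex-reformulation}.

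The argument is a first-order optimality comparison. By \cref{lemma:convex-reformulation}, $(0,X^\star)$ minimizes the convex function $f_\mathrm{cvx}$ over the convex set $\mathcal{F}_\mathrm{cvx}$. Hence for every feasible $(Y,X)$, the directional derivative of $f_\mathrm{cvx}$ at $(0,X^\star)$ along the feasible direction $(Y,X)-(0,X^\star)$ is nonnegative. Segments stay in $\mathcal{F}_\mathrm{cvx}$ by convexity, so we take the one-sided limit along $t\mapsto (tY, X^\star+t(X-X^\star))$ with $t\in[0,1]$.

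The quadratic term $h(Y,X)=\langle R,YX^{-1}Y^\tr\rangle$ has zero derivative at $Y=0$. Along the segment,
\[
h\bigl(tY,X^\star+t(X-X^\star)\bigr)=t^2\langle R,Y(X^\star+t(X-X^\star))^{-1}Y^\tr\rangle=O(t^2),
\]
since $X^\star\succ0$ keeps the inverse bounded for small $t$. Therefore the directional derivative of $f_\mathrm{cvx}$ equals that of the linear part $g$, namely $g(Y,X)-g(0,X^\star)$. Nonnegativity of this derivative gives $g(Y,X)\geq g(0,X^\star)=f_\mathrm{cvx}^\star$, which is the claim.

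The only delicate point is making sure the optimality of $(0,X^\star)$ for \cref{eq:LQR_unif_convex} is available. This requires $X^\star\succ0$, which follows from \cref{assumption:X_positive-definite} since $K^\star\in\mathcal{K}$. It also requires the equivalences of \cref{proposition:LQR_KX_ineq} and \cref{lemma:convex-reformulation}, both of which are stated earlier. Given these, no Lyapunov-specific algebra is needed. The argument uses only convexity of $\mathcal{F}_\mathrm{cvx}$ and the fact that the nonlinear part of the cost is quadratic in $Y$ and vanishes to second order at $Y=0$. It therefore applies verbatim to both $\Psi_\ct$ and $\Psi_\dt$.

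An alternative, less elegant route would be a direct completion-of-squares using the Riccati equations. I would fall back on it only if the limit argument raised concerns, but none are expected.
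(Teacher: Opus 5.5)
Your proof is correct and takes essentially the same route as the paper. You split $f_\mathrm{cvx}-f^\star_\mathrm{cvx}$ into the linear part $\langle Q+(K^\star)^\tr RK^\star,X-X^\star\rangle+2\langle RK^\star,Y\rangle$ plus $\langle R,YX^{-1}Y^\tr\rangle$, then use first-order optimality of $(0,X^\star)$, where the quadratic term has zero gradient; the only difference is that you derive this variational inequality by a one-sided limit along feasible segments, whereas the paper cites the standard optimality condition for convex programs.
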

\begin{proof}
 Let $(K^\star, X^\star)$ be an optimal solution to \cref{eq:LQR_unif_2}. For notational simplicity, we denote $Q^\star = Q+(K^\star)^\tr RK^\star$. By \cref{proposition:LQR_KX_ineq,lemma:convex-reformulation}, we know that the optimal LQR value is 
$
f^\star_{\mathrm{cvx}}=J^\star = \langle Q+(K^\star)^\tr RK^\star,X^\star \rangle=\langle Q^\star,X^\star \rangle. 
$
By direct computation, we have 
\begin{align}\label{eq:fcvx_gap_proof}
    f_\mathrm{cvx}(Y,X)-f^\star_{\mathrm{cvx}} 
    = 
    \langle Q^\star,X-X^\star \rangle
     +2\langle RK^\star, Y\rangle
     + \langle R,YX^{-1}Y^\tr\rangle, \quad \forall (Y,X) \in \mathcal{F}_\mathrm{cvx}. 
\end{align}

Now, using standard matrix calculus \cite{petersen2008matrix}, we can compute the gradient 
of $\nabla f_\mathrm{cvx}(Y,X)$ at $(0,X^\star)$ as
$\nabla f_{\mathrm{cvx}}\left(0, X^{\star}\right) = \begin{bmatrix}
    2 R K^{\star} \\ Q^\star
\end{bmatrix}$
.
The optimality of $(0,X^\star)$ to the convex problem \cref{eq:LQR_unif_convex} (see e.g., \cite[Th. 6.12]{rockafellar1998variational}) implies that 
\begin{align*}
     \left\langle \nabla f_{\mathrm{cvx}}\left(0, X^{\star}\right),\begin{bmatrix}
Y-0 \\
X-X^{\star}
\end{bmatrix} \right\rangle =  \langle Q^\star,X-X^\star \rangle
     +2\langle RK^\star, Y\rangle \geq 0, \quad \forall (Y,X) \in \mathcal{F}_\mathrm{cvx}.
\end{align*}
We complete the proof by substituting this into \cref{eq:fcvx_gap_proof}. 
\end{proof}

With \Cref{lemma:qg_Jcvx}, it is now easy to derive the lower bound \cref{eq:Jlqr_QG_inprod}.

\vspace{1mm}

\noindent \textbf{Proof of \cref{eq:Jlqr_QG_inprod}}: For any stabilizing gain $K \in \mathcal{K}$, we compute its associated Lyapunov variable $X_K$ to \cref{eq:LQR_unif_Lyapunov}. We know $X_K \succ 0$ by \Cref{assumption:X_positive-definite}. Let 
$(Y,X)=\Upsilon(K,X_K)$ 
which by construction satisfies $(Y,X) \in \mathcal{F}_{\mathrm{cvx}}$ and $f_\mathrm{cvx}(Y,X)=J(K)$. Substituting $(Y,X)=\Upsilon(K,X_K)$ into \cref{eq:partial_qg},
we immediately obtain 
$$
J(K) - J^\star = f_\mathrm{cvx} (Y,X)-f_\mathrm{cvx}^\star \geq \langle
    R,YX^{-1}Y^\tr
    \rangle = \left\langle
    R,(K-K^\star)X_K(K-K^\star)^\tr\right\rangle, \;\; \forall K \in \mathcal{K}.
$$

\begin{remark}[Quadratic growth] \label{remark:QG-LQR}
For deriving 
the lower-bound of $J-J^\star$ in \cref{eq:Jlqr_QG_inprod},
\cref{assumption:X_positive-definite} is not necessarily required.
In fact, 
we can show \cref{eq:Jlqr_QG_inprod} by the \textit{completion of squares} technique
for both continuous-time and discrete-time systems.
For example, in the discrete-time case,
we know \begin{align*}
    J(K)
    = &\langle P^\star ,W\rangle
    + 
    \mathbb{E}\left[
     \sum_{t=0}^\infty
    (u_t-K^\star x_t)^\tr
    (R+B^\tr P^\star B)
    (u_t-K^\star x_t) 
    \right]
    \\
    =&\langle P^\star ,W\rangle
    +
    \mathbb{E}\left[
     \sum_{t=0}^\infty
    x_t^\tr (K-K^\star )^\tr
    (R+B^\tr P^\star B)(K-K^\star)x_t
    \right]
    ,
\end{align*}
which only requires \cref{assumption:stabilizable}
\cite[Lem 16.6.3]{lancaster1995algebraic}
and yields \cref{eq:Jlqr_QG_inprod} from \cref{eq:second-order-moment-state}.
A similar result holds in continuous time (see e.g., \cite[Lem 16.3.2]{lancaster1995algebraic}).
Our proof presents an alternative way 
through a convex reformulation,
which also allows us to show the upper-bound \cref{eq:Jlqr_weakPL_inprod}.
\hfill$\square$
\end{remark}

\subsubsection{The upper bound \cref{eq:Jlqr_weakPL_inprod}}\label{subsubsection:proof_weakPL}

Our key idea in establishing \cref{eq:Jlqr_weakPL_inprod} is to utilize the convex property \cref{eq:gradient-convex} and the chain rule for computing (sub)gradients. For convenience, we introduce the unconstrained form of \cref{eq:LQR_unif_convex}.
   Let 
   \begin{align} \label{eq:LQR-convex-unconstrained}
    \tilde{f}_\mathrm{cvx} (Y,X):=
        f_\mathrm{cvx}(Y,X)
        + \delta_{\mathcal{F}_\mathrm{cvx}}(Y,X), \quad \forall (Y,X) \in \mathbb{R}^{m \times n} \times 
        \mathbb{R}^{n\times n}, 
    \end{align}
    where 
    $\delta_{\mathcal{F}_\mathrm{cvx}}(\cdot)$ denotes the indicator function of $\mathcal{F}_\mathrm{cvx}$, i.e.,
    $\delta_{\mathcal{F}_\mathrm{cvx}}(Y,X)=0$ for $(Y,X)\in\mathcal{F}_\mathrm{cvx}$ and
     $\delta_{\mathcal{F}_\mathrm{cvx}}(Y,X)=\infty$ for $(Y,X)\notin\mathcal{F}_\mathrm{cvx}$.
    Then, \cref{eq:LQR_unif_convex} is equivalent to  
    $
    \min_{Y,X}\,\tilde{f}_\mathrm{cvx}(Y,X).
    $

    Since $\tilde{f}_\mathrm{cvx}$ is nonsmooth but convex, we can define its usual convex subdifferential as 
    $$
    \partial \tilde{f}_\mathrm{cvx}(Y,X) = \{(H_1,H_2) 
    \mid \tilde{f}_\mathrm{cvx}(\hat{Y},\hat{X}) \geq \tilde{f}_\mathrm{cvx}(Y,X) + \langle H_1, \hat{Y} - Y  \rangle + \langle H_2, \hat{X} - X
    \rangle,\,
    \forall (\hat{Y},\hat{X})
    \}
    $$
    at any point $(Y,X) \in \mathcal{F}_{\mathrm{cvx}}$. Thanks to \cref{proposition:LQR_KX_ineq} and the change of variables in \cref{eq:change-of-variable-X-Y} that connects \Cref{eq:LQR_unif_convex,eq:LQR_unif_2}, we can explicitly construct a subgradient in $\partial \tilde{f}_\mathrm{cvx}(Y_K,X_K)$ using the gradient $\nabla J(K)$. In particular, we have the following key result. %

    \begin{proposition}\label{proposition:fcvx_subgradient_J}
        Suppose \cref{assumption:stabilizable,assumption:X_positive-definite} hold. Let $K\in\mathcal{K}$, and denote its associated Lyapunov variable to \cref{eq:LQR_unif_Lyapunov} as $X_K$. Define $Y_K=(K-K^\star) X_K$. Then, we have
    \begin{equation}\label{eq:gradJ_fcvx}
     (   \nabla J(K)X_K^{-1},
         (K^\star-K)^\tr
    \nabla J(K)X_K^{-1})
     \in
       \partial \tilde{f}_\mathrm{cvx}
       \left(Y_K,X_K\right).
    \end{equation}
    \end{proposition}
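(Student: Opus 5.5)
The plan is to check the subgradient inequality in the definition of $\partial \tilde{f}_\mathrm{cvx}(Y_K,X_K)$ directly. I will combine convexity of $\tilde f_\mathrm{cvx}$ along line segments with a partial-minimization characterization of $J$ and the differentiability of $J$. This is the nonsmooth chain rule alluded to in \cref{remark:hidden_convex_functions}, done by hand. Write $H=(H_1,H_2)=(\nabla J(K)X_K^{-1},\,(K^\star-K)^\tr\nabla J(K)X_K^{-1})$ and $Z_K=(Y_K,X_K)$.

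\emph{Step 1 (partial minimization).} I will show that for every $K$ and every $X$ with $(K,X)$ feasible for \cref{eq:LQR_unif_2}, we have $K\in\mathcal{K}$ and $\langle Q+K^\tr RK,X\rangle\ge J(K)$, with equality at $X=X_K$.

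Membership $K\in\mathcal{K}$ should follow from the argument behind \cref{proposition:LQR_KX_ineq}: a positive definite solution of the Lyapunov inequality forces stability. For the inequality, subtract the Lyapunov equation for $X_K$ from the inequality for $X$. The difference $\Delta=X-X_K$ then satisfies $\Psi\bigl([A+BK;\,I]\,\Delta\,[A+BK;\,I]^\tr\bigr)\preceq 0$. Since $A+BK$ is stable, the series/integral representation of Lyapunov solutions (\cref{subsection:Lyapunov_summary}) gives $\Delta\succeq 0$. Because $Q+K^\tr RK\succeq0$, this yields $\langle Q+K^\tr RK,\Delta\rangle\ge 0$. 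The identity $(Y_K,X_K)=\Upsilon(K,X_K)\in\mathcal{F}_\mathrm{cvx}$ with $f_\mathrm{cvx}(Y_K,X_K)=J(K)$ is \cref{lemma:convex-reformulation}.

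\emph{Step 2 (convexity along a segment).} Fix any $\hat Z=(\hat Y,\hat X)$; if $\hat Z\notin\mathcal{F}_\mathrm{cvx}$ the inequality is trivial, so assume $\hat Z\in\mathcal{F}_\mathrm{cvx}$. Set $Z_t=Z_K+t(\hat Z-Z_K)$ for $t\in[0,1]$, which lies in $\mathcal{F}_\mathrm{cvx}$ by convexity of that set, and let $\varphi(t)=f_\mathrm{cvx}(Z_t)$. Convexity of $\varphi$ gives $\varphi(1)-\varphi(0)\ge \liminf_{t\downarrow0}(\varphi(t)-\varphi(0))/t$.

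Let $(K_t,X_t)=\Upsilon^{-1}(Z_t)$, so that $K_t=Y_tX_t^{-1}+K^\star$. Each $(K_t,X_t)$ is feasible for \cref{eq:LQR_unif_2}. Step 1 then gives $\varphi(t)\ge J(K_t)$ with equality at $t=0$, and hence
\[
\varphi(1)-\varphi(0)\;\ge\;\lim_{t\downarrow0}\frac{J(K_t)-J(K)}{t}=\langle\nabla J(K),\dot K_0\rangle,
\]
using that $J$ is $C^1$ on the open set $\mathcal{K}$ and that $t\mapsto K_t$ is smooth.

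\emph{Step 3 (compute the derivative).} With $\Delta Y=\hat Y-Y_K$ and $\Delta X=\hat X-X_K$, we have
\[
\dot K_0=\Delta Y X_K^{-1}-Y_KX_K^{-1}\Delta X X_K^{-1}=\Delta Y X_K^{-1}-(K-K^\star)\Delta X X_K^{-1}.
\]
Cyclic trace manipulations then give $\langle\nabla J(K),\dot K_0\rangle=\langle H_1,\Delta Y\rangle+\langle H_2,\Delta X\rangle$, since $H_2$ is multiplied against the symmetric matrix $\Delta X$. This is exactly the subgradient inequality, which would prove \cref{eq:gradJ_fcvx}.

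I expect Step 1 to be the main obstacle. It needs two things: that feasibility of the relaxed Lyapunov inequality with $X\succ0$ implies $K\in\mathcal{K}$, and the monotonicity $X\succeq X_K$, both handled uniformly for $\Psi_\ct$ and $\Psi_\dt$. I would first try to extract both from the proof of \cref{proposition:LQR_KX_ineq}. Failing that, I would argue directly via $\Delta=\int_0^\infty e^{(A+BK)t}(-\Psi(\cdot))e^{(A+BK)^\tr t}\,dt$ and its discrete-time sum analogue.
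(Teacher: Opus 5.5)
Your proof is correct, and it follows a mildly but genuinely different route from the paper's. The paper first shows that $(\nabla J(K),0)$ is a Fr\'{e}chet subgradient of the nonconvex lifted function $J_\mathrm{lft}$ at $(K,X_K)$ (\cref{proposition:subgradient_Jlft}). It then transports this through the smooth inverse map $\Pi=\Upsilon^{-1}$, using the nonsmooth chain rule of \cite{rockafellar1998variational} and the vectorized Jacobian of \cref{proposition:Jacobian}. Convexity of $\tilde f_\mathrm{cvx}$ enters only at the very end, to identify the Fr\'{e}chet subdifferential with the convex one. The appendix variant (\cref{subsection:proof-2-fcvx_subgradient}) does the same first-order expansion by hand, with remainders controlled uniformly over all nearby $(\hat Y,\hat X)$.

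You instead use convexity at the outset. Restricting $f_\mathrm{cvx}$ to a segment inside the convex set $\mathcal{F}_\mathrm{cvx}$ reduces the global subgradient inequality to a one-sided directional derivative at $t=0$. You then bound that from below by the derivative of $t\mapsto J(K_t)$ along a single smooth curve. Your route buys elementarity: no Fr\'{e}chet subdifferentials, no lower-semicontinuity check to invoke the chain rule, no Kronecker-product Jacobian, and only differentiability of $J$ along curves. The paper's route buys modularity: the Fr\'{e}chet statement about $J_\mathrm{lft}$ is independent of the change of variables, and the chain-rule step applies verbatim to any other smooth reparametrization of the lifted problem.

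One correction to your own assessment: you do not need the global claim in Step 1 that every $(K,X)$ feasible for \cref{eq:LQR_unif_2} has $K\in\mathcal{K}$. You only use $\varphi(t)\ge J(K_t)$ inside the limit $t\downarrow 0$, so it suffices that $K_t\in\mathcal{K}$ for small $t$. That follows from continuity of $t\mapsto K_t=Y_tX_t^{-1}+K^\star$ (valid since $X_t\succ 0$) and openness of $\mathcal{K}$. For such $t$, your monotonicity argument gives $X_t\succeq X_{K_t}$: $A+BK_t$ is stable, and $X_t-X_{K_t}$ solves a Lyapunov equation with positive semidefinite forcing. The paper's \cref{proposition:subgradient_Jlft} likewise uses the bound $J_\mathrm{lft}(\hat K,\hat X)\ge J(\hat K)$ only for $\hat K$ near $K$. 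So the step you flagged as the main obstacle is not one.

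A minor point: in Step 3 the trace identity $\langle\nabla J(K),\dot K_0\rangle=\langle H_1,\Delta Y\rangle+\langle H_2,\Delta X\rangle$ holds exactly by cyclicity of the trace, so you do not need symmetry of $\Delta X$.
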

   By construction, we have  $(Y_K,X_K) \in \mathcal{F}_\mathrm{cvx}$, so that $\partial \tilde{f}_\mathrm{cvx}
       \left(Y_K,X_K\right)$ is not empty. \Cref{proposition:fcvx_subgradient_J} explicitly constructs one subgradient in $\partial \tilde{f}_\mathrm{cvx}
       \left(Y_K,X_K\right)$ using the gradient $\nabla J(K)$. 
    To establish this result, we use the chain rule for nonsmooth functions \cite{rockafellar1998variational} since $\tilde{f}_\mathrm{cvx}$ is nonsmooth, which involves
    a calculation of the Jacobian of $\Upsilon^{-1}$ for the invertible mapping $\Upsilon$ in
    \cref{eq:change-of-variable-X-Y}.
We present the proof details in \cref{section:proof_partialmin_chain-rule}.\footnote{We also present an alternative proof without using the chain rule in \cref{subsection:proof-2-fcvx_subgradient}.
}
With \Cref{proposition:fcvx_subgradient_J}, we are ready to establish the upper bound \cref{eq:Jlqr_weakPL_inprod}.

\vspace{1mm}

    \textbf{Proof of \cref{eq:Jlqr_weakPL_inprod}.}     Let $(K^\star, X^\star)$ be the optimal solution to \cref{eq:LQR_unif}, with optimal value as $J^\star$. By \Cref{lemma:convex-reformulation}, we know that $(0,X^\star)$ is a minimizer of $\tilde{f}_{\mathrm{cvx}}$. Then, similar to the inequality \cref{eq:gradient-convex},    
   by the definition of convex subdifferential, we have 
   \begin{equation}\label{eq:fcvx_convexity_proof}
        \tilde{f}_\mathrm{cvx}(Y,X)
        -f^\star_{\mathrm{cvx}}
        \leq 
        \langle 
            H_1, Y - 0 \rangle + \langle H_2,
            X-X^\star
        \rangle,\quad
      \forall  (H_1,H_2)\in \partial \tilde{f}_\mathrm{cvx}(Y,X). 
    \end{equation}
For any stabilizing gain $K\in\mathcal{K}$ and its associated Lyapunov variable $X_K \succ 0$ to \cref{eq:LQR_unif_Lyapunov}, we have by construction that 
     $   (Y_K,X_K)=\left((K-K^\star)X_K,X_K\right)\in \mathcal{F}_\mathrm{cvx}, $
    $J(K) = \tilde{f}_\mathrm{cvx}(Y_K,X_K)$,  and $J^\star = f_{\mathrm{cvx}}^\star$. 
    
    We apply the inequality \cref{eq:fcvx_convexity_proof} to this pair $(Y_K,X_K)$, leading to 
    \begin{equation} \label{eq:fcvx-Jk}
    \begin{aligned}
    J(K)-J^\star
         =
        \tilde{f}_\mathrm{cvx}(Y_K,X_K)-J^\star
\leq &
        \langle 
        H_1, Y_K \rangle + \langle H_2, X_K-X^\star
        \rangle,
    \end{aligned}
    \end{equation}
    where $(H_1,H_2) \in \partial \tilde{f}_\mathrm{cvx}(Y_K,X_K)$. 
Now substituting \cref{eq:gradJ_fcvx} into \cref{eq:fcvx-Jk} leads to 
$$
\begin{aligned}
    J(K)-J^\star &\leq \langle  \nabla J(K)X_K^{-1}, Y_K \rangle  - \langle  (K-K^\star)^\tr
    \nabla J(K)X_K^{-1}, X_K - X^\star\rangle \\
    &=\langle  \nabla J(K)X_K^{-1}, (K-K^\star)X_K \rangle  - \langle  (K-K^\star)^\tr
    \nabla J(K)X_K^{-1}, X_K - X^\star\rangle \\
    &= \langle  
    \nabla J(K)X_K^{-1}, (K-K^\star)X^\star\rangle \\
    &= \langle \nabla J(K) X_K^{-1}X^\star,K-K^\star \rangle,
\end{aligned}
$$
which confirms the desired upper bound \cref{eq:Jlqr_weakPL_inprod}.
\hfill$\square$

\section{Proof of \Cref{proposition:fcvx_subgradient_J} via the chain rule}\label{section:proof_partialmin_chain-rule}

Our proof of \Cref{proposition:fcvx_subgradient_J} leverages the connections among the original LQR \cref{eq:LQR_PO_unif}, its equivalent inequality form \cref{eq:LQR_unif_2}, and the convex reformulation \cref{eq:LQR_unif_convex}. Both \cref{eq:LQR_unif_2} and \cref{eq:LQR_unif_convex} have an additional Lyapunov variable $X$, which involves explicit linear matrix inequality constraints. To facilitate our discussion, we define the unconstrained form of \cref{eq:LQR_unif_2}
    using the indicator function as 
         \begin{equation}
         J_\mathrm{lft}(K,X):=
    \langle 
    Q+K^\tr RK,X
    \rangle
    + \delta_{\mathcal{L}_\mathrm{lft}}(K,X), 
         \end{equation}
    where 
     $   \mathcal{L}_\mathrm{lft}
        := \left\{
        (K,X)\in \mathbb{R}^{m\times n}\times \mathbb{S}^n
        \middle| \text{ \cref{eq:LQR_unif_2-lyapunov} holds} 
        \right\}.$ 
    From the proof of \cref{proposition:LQR_KX_ineq}, for any stabilizing gain $K \in \mathcal{K}$ and its associated Lyapunov solution $X_K$, we have the partial minimization property:
        \begin{equation} \label{eq:connection-J-Jlft}
        J(K) = \min_{X\in \mathbb{S}^n} J_\mathrm{lft}(K,X), \qquad X_K\in\arg\min_{X\in \mathbb{S}^n} J_\mathrm{lft}(K,X).
        \end{equation}     
    From the convex reformulation in \cref{subsection:convex_reformulation}, 
    we have 
     $   {J}_\mathrm{lft}(K,X)
        = (\tilde{f}_\mathrm{cvx}\circ \Upsilon)(K,X)
    $
    whenever $X \in \mathbb{S}^n_{++}$,  where $\tilde{f}_\mathrm{cvx}$ is given in \cref{eq:LQR-convex-unconstrained} and $\Upsilon$ is defined in \cref{eq:change-of-variable-X-Y}. For the invertible mapping $\Upsilon$, let us denote $\Pi=\Upsilon^{-1}$, given by
    \begin{equation} \label{eq:inverse-mapping}
    \Pi(Y,X)=(YX^{-1}+K^\star,X), \qquad \forall (Y,X) \in \mathbb{R}^{m \times n} \times \mathbb{S}^n_{++}. 
    \end{equation}
    Then, it holds for any $X\in \mathbb{S}_{++}^n$
    that
    \begin{equation} \label{eq:connectin-Jlft-fcvx}
\tilde{f}_\mathrm{cvx}(Y,X) = ({J}_\mathrm{lft}\circ\Pi)(Y,X).
    \end{equation}

    We will prove \Cref{proposition:fcvx_subgradient_J} based on \cref{eq:connection-J-Jlft} and \cref{eq:connectin-Jlft-fcvx}. 
    Since ${J}_\mathrm{lft}$ is nonsmooth and nonconvex, we here introduce 
   an extension of subgradients to nonconvex functions, i.e., Fr\'{e}chet subgradients. 
    \begin{definition}[Fr\'{e}chet subgradient\footnote{The definition can be naturally extended to matrix-valued functions.}]\label{definition:Frechet_subgradient}
    For a function $f:\mathbb{R}^n\to\mathbb{R}\cup\{+\infty\}$
    and a point $x$ at which $f(x)$ is finite, vector $v$ is said to be
    a \textit{Fr\'{e}chet subgradient}
    at $x$ if
    \begin{equation*}
       \underset{y\to x}{\liminf}
        \frac{f(y)-f(x)- \langle v,y-x \rangle}{\|y-x\|} \geq 0.
    \end{equation*}  
    \end{definition}
Let $\hat \partial f(x)$ denote the Fr\'{e}chet subdifferential at $x$, i.e., the set of all Fr\'{e}chet subgradients at $x$. When $f$ is convex, $\hat \partial f(x)$ becomes the usual convex subdifferential. If $f$ is smooth, then we have $\hat \partial f(x) = \{\nabla f(x)\}$, i.e., the Fr\'{e}chet subdifferential becomes the singleton with $\nabla f(x)$. 

Thanks to \cref{eq:connection-J-Jlft}, we can connect the gradient $\nabla J(K)$ to a Fr\'{e}chet subgradient of $\hat{\partial} J_\mathrm{lft}(K,X_K)$. 
    \begin{lemma}\label{proposition:subgradient_Jlft}
    With the same setting in \Cref{proposition:fcvx_subgradient_J}, 
    it holds
    for $K\in\mathcal{K}$ that
\begin{equation}\label{eq:partial_minimization_grad}
            (\nabla J(K),
            0)
        \in 
       \hat{\partial} J_\mathrm{lft}(K,X_K).
    \end{equation}
    \end{lemma}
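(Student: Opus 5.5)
The plan is to verify the definition of a Fr\'{e}chet subgradient directly, using the partial minimization property \cref{eq:connection-J-Jlft} together with the differentiability of $J$ on the open set $\mathcal{K}$. Fix $K\in\mathcal{K}$ and its Lyapunov solution $X_K$. We must show
\begin{equation*}
\liminf_{(K',X')\to(K,X_K)} \frac{J_\mathrm{lft}(K',X')-J_\mathrm{lft}(K,X_K)-\langle \nabla J(K),K'-K\rangle}{\|(K'-K,X'-X_K)\|}\geq 0 .
\end{equation*}
By \cref{eq:connection-J-Jlft}, the base value is $J_\mathrm{lft}(K,X_K)=J(K)$.

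\textbf{Step 1: lower-bound the numerator by $J$ alone.} First, since $\mathcal{K}$ is open, points $K'$ close to $K$ lie in $\mathcal{K}$. For such $K'$ and any $X'$, the partial minimization in \cref{eq:connection-J-Jlft} applied at $K'$ gives
\begin{equation*}
J_\mathrm{lft}(K',X')\geq \min_{X}J_\mathrm{lft}(K',X)=J(K').
\end{equation*}
This holds trivially when $(K',X')\notin\mathcal{L}_\mathrm{lft}$, because then the value is $+\infty$. Hence the numerator is at least
\begin{equation*}
J(K')-J(K)-\langle\nabla J(K),K'-K\rangle .
\end{equation*}

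\textbf{Step 2: use differentiability of $J$.} Because $J$ is continuously differentiable on $\mathcal{K}$, this expression equals $o(\|K'-K\|_F)$. Since $\|K'-K\|_F\leq\|(K'-K,X'-X_K)\|$, the ratio is bounded below by a quantity tending to $0$, and the $\liminf$ is $\geq 0$. This gives \cref{eq:partial_minimization_grad}.

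\textbf{The only delicate point} is justifying that the partial minimization identity \cref{eq:connection-J-Jlft} holds at every $K'\in\mathcal{K}$ near $K$, not just at $K$. This identity is taken from the proof of \cref{proposition:LQR_KX_ineq}. Its content is that any $X\succ0$ satisfying the Lyapunov inequality dominates $X_{K'}$ in the Loewner order, and $Q+K'^\tr RK'\succeq0$, so the cost at such $X$ is at least the cost at $X_{K'}$. For completeness, the monotonicity can be recalled via the comparison lemma for Lyapunov inequalities. In that lemma, the difference $X-X_{K'}$ solves a Lyapunov equation with a positive semidefinite forcing term and a stable $A+BK'$, and is therefore positive semidefinite.

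We also need points $K'\notin\mathcal{K}$ to cause no trouble. They are excluded by openness for $K'$ sufficiently close to $K$. In any case, \cref{assumption:X_positive-definite} implies that no feasible $X\succ0$ exists for nonstabilizing $K'$, so $J_\mathrm{lft}=+\infty$ there.
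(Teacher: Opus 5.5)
Your proof is correct and is essentially the paper's argument: you lower-bound $J_\mathrm{lft}(\hat K,\hat X)$ by $J(\hat K)$ via the partial-minimization property and then use differentiability of $J$ at $K$. Your Lyapunov-comparison justification of $J_\mathrm{lft}(K',\cdot)\geq J(K')$ at nearby $K'\in\mathcal{K}$, and your handling of the full denominator $\|(K'-K,X'-X_K)\|$, are slightly more explicit than the paper's write-up. Your closing remark about nonstabilizing $K'$ is unnecessary, because openness of $\mathcal{K}$ already keeps $K'$ inside $\mathcal{K}$ in the limit, so you need not rely on it.
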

    \begin{proof} 
    We verify this lemma directly using \Cref{definition:Frechet_subgradient}.
    Since $J$ is differentiable over $\mathcal{K}$, we have 
    \begin{equation*}
    J(\hat{K}) = J(K) + \langle \nabla J(K),\hat K-K \rangle + o(\|\hat K-K\|_F).
    \end{equation*}
    From \cref{eq:connection-J-Jlft}, we see that
    $J_\mathrm{lft}(\hat{K},\hat X)\geq J(\hat{K}),\,\forall \hat X\in\mathbb{R}^{n\times n}$
    and
    $J(K)=J_\mathrm{lft}(K,X_K)$.
    Then,
    \begin{equation}\label{eq:Frechet_Jlft}
    \begin{aligned}
        J_\mathrm{lft}(\hat{K},\hat X)
        \geq J(\hat{K}) &= J(K) + \langle \nabla J(K),\hat K-K \rangle + o(\|\hat K-K\|_F) \\
        &= J_\mathrm{lft}(K,X_K)
        + \langle 
        \nabla J(K),\hat K-K
        \rangle
        +
        \langle 
        0,\hat X-X_K
        \rangle
        +o(\|\hat K-K\|_F). 
    \end{aligned}
    \end{equation}
    Thus, we have
    $$\liminf_{\hat K\to K}
    \frac{
    J_\mathrm{lft}(\hat{K},\hat X)-J_\mathrm{lft}(K,X_K)
    -\langle 
        \nabla J(K),\hat K-K
        \rangle
        -
        \langle 
        0,\hat X-X_K
        \rangle}{\|(\hat K,\hat X)-(K,X_K)\|_F}\geq0.
    $$ 
    By the definition of the Fr\'{e}chet subgradient, we confirm \cref{eq:partial_minimization_grad}.
    \end{proof}

{
\cref{proposition:subgradient_Jlft} 
is similar to \textit{Danskin's theorem} \cite{bertsekas1997nonlinear} and
may seem obvious from 
the partial minimization property
$J(K)=\min_X J_\mathrm{lft}(K,X)=J_\mathrm{lft}(X,X_K)$.
However, in our case of $J_\mathrm{lft}$, it may not simply follow from 
standard results 
(e.g., \cite[Sec.~10.C]{rockafellar1998variational}),
due to 
the indicator function $\delta_{\mathcal{L}_\mathrm{lft}}$ and nonconvexity. 
We have used a direct proof based on \Cref{definition:Frechet_subgradient}
using the differentiability of $J$,
which is in general not guaranteed under partial minimization.
}

Thanks to 
$\tilde f_\mathrm{cvx}=J_\mathrm{lft}\circ \Pi$ in \cref{eq:connectin-Jlft-fcvx}, we next introduce the chain rule to compute a subgradient in $\partial \tilde{f}_{\mathrm{cvx}}$ using a Fr\'{e}chet subgradient in $\hat{\partial} J_\mathrm{lft}$. In particular, the standard chain rule
    for nonsmooth functions \cite[Th. 10.6]{rockafellar1998variational}\footnote{
    The term ``regular subgradient'' is used for ``Fr\'{e}chet subgradient''; see \cite[Def. 8.3 (a)]{rockafellar1998variational}. In \cref{eq:chain-rule_vec}, we have used suitable vectorization to handle the matrix variables; also $J_\mathrm{lft}$ is lower semi-continuous at $\Pi
        (Y,X)$ for any  $(Y,X) \in \mathcal{F}_\mathrm{cvx}$ (see \cref{subsubsec-proof:chain-rule}
     for further details). 
    }
    guarantees that for any $(Y,X) \in \mathcal{F}_\mathrm{cvx}$
    and
        $ 
    (G_1,G_2) \in\hat{\partial} J_\mathrm{lft}
        (\Pi
        (Y,X) )$, we have 
     \begin{equation}\label{eq:chain-rule_vec}
    (H_1,
    H_2)
       \in
       \partial \tilde{f}_\mathrm{cvx}(Y,X)
       \quad\text{with}\quad
               \begin{bmatrix}
           \operatorname{vec} (H_1)\\
            \operatorname{vec} (H_2)
        \end{bmatrix}
        =\Gamma
        (Y,X)^\tr 
        \begin{bmatrix}
            \mathrm{vec}(G_1)\\
            \mathrm{vec}(G_2)
        \end{bmatrix},
    \end{equation}
    where $\Gamma (Y,X)$ corresponds to the Jacobian of $\Pi(Y,X)=(YX^{-1}+K^\star,X)$ with vectorization:
        \begin{equation} \label{eq:Jacobian-Pi}
            \Gamma(Y,X):=
    \begin{bmatrix}
        \frac{\partial }{\partial \operatorname{vec}^\tr(Y) }
         \operatorname{vec}(YX^{-1}+K^\star)
        &
         \frac{\partial }{\partial \operatorname{vec}^\tr(X) }
          \operatorname{vec}(YX^{-1}+K^\star)
         \\
          \frac{\partial }{\partial \operatorname{vec}^\tr(Y) }
          \operatorname{vec}(X)
          &
         \frac{\partial }{\partial \operatorname{vec}^\tr(X) }
         \operatorname{vec}(X)
    \end{bmatrix}.
        \end{equation}
    Here, we use $\operatorname{vec}(\cdot)$ to denote the standard vectorization of a matrix. 
    We now present the explicit form of Jacobian $\Gamma(Y,X)$ in \cref{eq:chain-rule_vec}.
    See \cref{subsubsec-proof-Jacobian} for a proof.
    \begin{lemma}[Explicit form of the Jacobian]\label{proposition:Jacobian}
    Define the vectorized form of the mapping $\Pi$ as 
    \begin{equation*}
         \hat{\Pi}\left(
    \begin{bmatrix}
        \operatorname{vec}(Y)\\\operatorname{vec}(X)
    \end{bmatrix}
    \right)
    = \begin{bmatrix}
        \operatorname{vec}(YX^{-1}+K^\star)\\
        \operatorname{vec}(X)
    \end{bmatrix},\qquad
    (Y,X)\in \mathbb{R}^{m\times n}\times
    \mathbb{S}^{n\times n}_{++}.
    \end{equation*}
    Then, the Jacobian $
    \nabla \tilde{\Pi}$ can be computed explicitly as 
    \begin{align}\label{eq:Gamma_Jacobian}
    \Gamma(Y,X)
    =\nabla \hat{\Pi}\left(
    \begin{bmatrix}
        \operatorname{vec}(Y)\\\operatorname{vec}(X)
    \end{bmatrix}
    \right)
    = \begin{bmatrix}
    X^{-1}\otimes I_m
    &
    -(X^{-1}\otimes YX^{-1})\\
    0& I_{n^2}
    \end{bmatrix} .
\end{align}
    \end{lemma}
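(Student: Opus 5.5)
We will compute the Jacobian of $\hat\Pi$ block by block. The map sends $(\operatorname{vec}(Y),\operatorname{vec}(X))$ to $(\operatorname{vec}(YX^{-1}+K^\star),\operatorname{vec}(X))$. The second output component is simply the identity in $X$ and is independent of $Y$. This gives the bottom row $[\,0\;\; I_{n^2}\,]$ directly. All the work therefore lies in the first block row, namely the derivatives of $\operatorname{vec}(YX^{-1})$. The constant $K^\star$ contributes nothing.

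For the derivative with respect to $Y$, we use the identity $\operatorname{vec}(ABC)=(C^\tr\otimes A)\operatorname{vec}(B)$. Writing $YX^{-1}=I_m\,Y\,X^{-1}$ gives $\operatorname{vec}(YX^{-1})=((X^{-1})^\tr\otimes I_m)\operatorname{vec}(Y)$. Since $X\in\mathbb{S}^n_{++}$ is symmetric, this equals $(X^{-1}\otimes I_m)\operatorname{vec}(Y)$. The map is linear in $Y$, so the first block is $X^{-1}\otimes I_m$.

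For the derivative with respect to $X$, we take a first-order perturbation $X+\Delta$ and use $(X+\Delta)^{-1}=X^{-1}-X^{-1}\Delta X^{-1}+o(\|\Delta\|)$. This gives the differential $d(YX^{-1})=-YX^{-1}\Delta X^{-1}$. Vectorizing with the same Kronecker identity, with $A=YX^{-1}$, $B=\Delta$ and $C=X^{-1}$, yields $-((X^{-1})^\tr\otimes YX^{-1})\operatorname{vec}(\Delta)=-(X^{-1}\otimes YX^{-1})\operatorname{vec}(\Delta)$, again by symmetry of $X$. Reading off the linear map in $\operatorname{vec}(\Delta)$ gives the upper-right block.

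The only subtle point is how to interpret differentiation over the symmetric matrix $X$. Since $X$ ranges over $\mathbb{S}^n_{++}$, perturbations $\Delta$ are symmetric. We treat $\operatorname{vec}(X)$ as a coordinate in $\mathbb{R}^{n^2}$, exactly as \cref{eq:Jacobian-Pi} does. The formula $YX^{-1}$ extends smoothly to the open set of invertible matrices, and we differentiate that extension. Its restriction to symmetric directions agrees with the computed differential, and this is all the chain rule \cref{eq:chain-rule_vec} needs. We expect this bookkeeping, rather than the calculus itself, to be the main point requiring care. Assembling the four blocks gives \cref{eq:Gamma_Jacobian}.
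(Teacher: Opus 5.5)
Your proposal is correct and follows essentially the same block-by-block computation as the paper. Both arguments use the identity $\operatorname{vec}(S_1VS_2)=(S_2^\tr\otimes S_1)\operatorname{vec}(V)$, the first-order expansion of $(X+\Delta)^{-1}$, and the symmetry of $X$. The only cosmetic difference is in the upper-right block: the paper writes $\operatorname{vec}(YX^{-1})=(I_n\otimes Y)\operatorname{vec}(X^{-1})$ and multiplies by $-(X^{-1}\otimes X^{-1})$ using the mixed-product rule, whereas you vectorize the differential $-YX^{-1}\Delta X^{-1}$ directly.
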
 

With \Cref{proposition:Jacobian,proposition:subgradient_Jlft} and the chain rule \cref{eq:chain-rule_vec}, the proof of \Cref{proposition:fcvx_subgradient_J} becomes immediate.

\vspace{1mm}

\noindent \textbf{Proof of \Cref{proposition:fcvx_subgradient_J}}. Let $K\in\mathcal{K}$, and denote its associated Lyapunov variable to \cref{eq:LQR_unif_Lyapunov} as $X_K$. Define $Y_K=(K-K^\star) X_K$. By definition, we have $(K,X_K) = \Pi(Y_K,X_K)$. 

From \cref{proposition:subgradient_Jlft}, we know that $(\nabla J(K),
            0)
        \in 
       \hat{\partial} J_\mathrm{lft}(K,X_K)$.  
    The chain rule \cref{eq:chain-rule_vec} implies that we have $(H_1,H_2)\in\partial \tilde{f}_\mathrm{cvx}(Y_K,X_K)$, where 
    \begin{align*}
         \begin{bmatrix}
             \operatorname{vec}(H_1)\\
             \operatorname{vec}(H_2)
         \end{bmatrix}
         \overset{\cref{eq:partial_minimization_grad}}{= }&
        \Gamma (Y_K,X_K)^\tr \begin{bmatrix}
            \operatorname{vec}(\nabla J(K))\\
            0 
        \end{bmatrix} \\
         \overset{\cref{eq:Gamma_Jacobian}}{= }&
         \begin{bmatrix}
        X_K^{-1}\otimes I_m & 0    \\
        -(X_K^{-1}\otimes (K-K^\star)^\tr)  & I_{n^2}
        \end{bmatrix}
        \begin{bmatrix}
            \operatorname{vec}(\nabla J(K))\\
            0 
        \end{bmatrix}  \\
        =&
        \begin{bmatrix}
            \operatorname{vec} (   \nabla J(K)X_K^{-1})\\
         \operatorname{vec}
         \left((K^\star-K)^\tr
    \nabla J(K)X_K^{-1}\right)
        \end{bmatrix}.
    \end{align*}
    Here, the final equality follows from the fact that 
$(S_2^\tr\otimes S_1)\operatorname{vec}(V)
=\operatorname{vec}(S_1VS_2)$ \cite[Prop. 5.2.1]{lancaster1995algebraic}. 
This implies  $H_1=\nabla J(K)X_K^{-1}$ and $
    H_2=(K^\star-K)^\tr
    \nabla J(K)X_K^{-1}$, and thus \cref{eq:gradJ_fcvx} holds.
    \hfill$\square$

%% file: sections/appendix-technical-details.tex
\section{Technical backgrounds in \cref{section:problem_formulation,section:gradient_dominance}}\label{appendix:backgrounds_sec2-3}

This section reviews some technical backgrounds for our results in \cref{section:problem_formulation,section:gradient_dominance}.
\subsection{Lyapunov equations}\label{subsection:Lyapunov_summary}

Here, we summarize the well-known and fundamental properties of Lyapunov equations.
Consider the Lyapunov equations for $A\in\mathbb{R}^{n\times n}$ and $W\in\mathbb{S}_{+}^n$:
\begin{align}\label{eq:Lyapunov_general}
\begin{cases} 
    AX + XA^\tr +W =0 & \text{(CT)}\\
    AXA^\tr - X +W =0, & \text{(DT)}
\end{cases}
\end{align}
where CT and DT stand for continuous-time and discrete-time, respectively.

For \cref{eq:Lyapunov_general}, the following properties are widely recognized in the literature.
See, e.g., \cite[Sec. 5]{lancaster1995algebraic} and \cite[Sec. 3.8]{zhou1996robust}.
We frequently use these properties in our analysis.
\begin{proposition}[Fundamental properties of Lyapunov equations]\label{proposition:Lyapunov_general}
Consider the Lyapunov equations in \cref{eq:Lyapunov_general}.
Assume that $A$ is Hurwitz stable in CT (resp. Schur stable in DT)
and  $W$ is positive semidefinite.
Then, the following statements hold:
\begin{enumerate}
    \item The solution $X$ is unique and given by the following positive semidefinite matrix:
    \begin{multicols}{2}
\setlength{\columnseprule}{0.4pt}
    \begin{equation*}
        X = 
            \displaystyle \int_0^\infty e^{At}We^{A^\tr t}dt\quad\text{(CT)}
    \end{equation*}
\columnbreak
    \begin{equation*}
        X = 
            \sum_{t=0}^\infty A^t W (A^\tr)^t.
            \quad \text{(DT)}
    \end{equation*}
\end{multicols}
\vspace{-4mm}
\noindent Moreover, $X$ admits the following alternative representation after the vectorization\footnote{
    This representation clearly shows that $X$ depends on $A$ continuously. This fact gives rise to \cref{Corollary:gradient-dominance-compact} as a direct corollary of \cref{theorem:gradient_dominance}.
    In addition, the continuous differentiability of $J$ also becomes clear from this vectorized form.
    }:
        \begin{multicols}{2}
\setlength{\columnseprule}{0.4pt}
    \begin{equation*}
        \operatorname{vec}(X) =
            -(I\otimes A+A\otimes I)^{-1}\operatorname{vec}(W)
            \quad\text{(CT)}
    \end{equation*}
\columnbreak
    \begin{equation*}
    \operatorname{vec}(X) =
        (I-A\otimes A)^{-1}\operatorname{vec}(W).
            \quad\text{(DT)}
    \end{equation*}
\end{multicols}
    \vspace{-4mm}
    \item The solution $X$ is positive definite if and only if $(A,W^{1/2})$ is controllable.\footnote{
    If $W\succ0$, $(A,W^{1/2})$ is always controllable.}
    \item Consider the Lyapunov equations \cref{eq:Lyapunov_general} and 
    \begin{align}\label{eq:Lyapunov_general_2}
\begin{cases} 
    A\hat X + \hat XA^\tr +\hat W =0 & \text{(CT)}\\
    A\hat XA^\tr -\hat  X + \hat W =0. & \text{(DT)}
    \end{cases}
\end{align}
    If $\hat W\succeq  W$ holds, then we have $\hat X\succeq X$
    in both continuous-time and discrete-time cases.
\end{enumerate}
\end{proposition}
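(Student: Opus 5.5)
The plan is to treat both time models in parallel, proving item 1 first and then deriving items 2 and 3 from the explicit solution formulas.

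\textbf{Item 1: uniqueness.} We vectorize using $\operatorname{vec}(S_1VS_2)=(S_2^\tr\otimes S_1)\operatorname{vec}(V)$. The CT equation becomes $(I\otimes A+A\otimes I)\operatorname{vec}(X)=-\operatorname{vec}(W)$, and the DT equation becomes $(I-A\otimes A)\operatorname{vec}(X)=\operatorname{vec}(W)$. The eigenvalues of $I\otimes A+A\otimes I$ are $\lambda_i(A)+\lambda_j(A)$, and those of $I-A\otimes A$ are $1-\lambda_i(A)\lambda_j(A)$. This follows from the standard spectral fact for Kronecker sums and products, e.g.\ via a Schur triangularization of $A$. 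Hurwitz stability gives $\operatorname{Re}(\lambda_i+\lambda_j)<0$, and Schur stability gives $|\lambda_i\lambda_j|<1$. In both cases the operator is invertible, which yields existence, uniqueness, and the vectorized formulas.

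\textbf{Item 1: explicit formulas.} Next we check that the integral and the series solve the equation. Convergence follows from the exponential bounds $\|e^{At}\|\le c e^{-\alpha t}$ in CT and $\|A^t\|\le c\gamma^t$ with $\gamma<1$ in DT. In CT, we have $\frac{d}{dt}\big(e^{At}We^{A^\tr t}\big)=Ae^{At}We^{A^\tr t}+e^{At}We^{A^\tr t}A^\tr$. Integrating over $[0,\infty)$ and using $e^{At}\to0$ gives $AX+XA^\tr=-W$. In DT, the telescoping identity $AXA^\tr=\sum_{t\ge1}A^tW(A^\tr)^t=X-W$ gives the equation directly. By uniqueness these expressions are \emph{the} solution. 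Positive semidefiniteness is immediate, since each integrand or summand has the form $MWM^\tr\succeq0$.

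\textbf{Item 2.} For any vector $v$, the formulas give
\[
v^\tr Xv=\int_0^\infty\|W^{1/2}e^{A^\tr t}v\|^2dt \ \text{(CT)}, \qquad v^\tr Xv=\sum_{t\ge0}\|W^{1/2}(A^\tr)^tv\|^2 \ \text{(DT)}.
\]
In DT, $v^\tr Xv=0$ if and only if $W^{1/2}(A^\tr)^tv=0$ for all $t$. By Cayley--Hamilton this is equivalent to the condition for $t=0,\dots,n-1$, i.e.\ $v^\tr\big[W^{1/2}\ AW^{1/2}\ \cdots\ A^{n-1}W^{1/2}\big]=0$. In CT, by continuity and analyticity, $W^{1/2}e^{A^\tr t}v\equiv0$ if and only if all derivatives at $t=0$ vanish, i.e.\ $W^{1/2}(A^\tr)^kv=0$ for all $k$. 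This is the same condition. Hence $X\succ0$ if and only if the controllability matrix of $(A,W^{1/2})$ has full row rank, which is exactly controllability.

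\textbf{Item 3.} Subtracting the two equations shows that $\hat X-X$ solves the same Lyapunov equation with right-hand data $\hat W-W\succeq0$. By item 1 (uniqueness together with the PSD explicit formula), $\hat X-X\succeq0$.

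\textbf{Main obstacle.} The only step needing care is the spectral claim for the Kronecker operators in the CT case, together with justifying that the improper integral may be differentiated and integrated termwise. I would handle the latter with the uniform exponential bound, which makes the boundary term at infinity vanish.
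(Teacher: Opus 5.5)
Your proof is correct and complete. The paper gives no proof of its own here. It defers to \cite[Sec.~5]{lancaster1995algebraic} and \cite[Sec.~3.8]{zhou1996robust}, and the only part it writes out (in \cref{subsection:proof_X_PD}, inside the proof of \cref{proposition:X_positive}) is the direction ``$(A,W^{1/2})$ controllable $\Rightarrow X\succ 0$.'' It does this via the same identity $u^\her X u=\int_0^\infty\|W^{1/2}e^{A^\tr t}u\|^2\,dt$ (resp.\ the series truncated to its first $n$ terms) that you use. So your argument matches the paper where they overlap, and it additionally supplies the converse of item~2 and self-contained proofs of items~1 and~3.
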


The first part in \Cref{proposition:Lyapunov_general} presents the explicit forms of the unique solution $X$.
It is worth noting that the first statement  can also be viewed as
the following cumulative covariance
    \begin{multicols}{2}
\setlength{\columnseprule}{0.4pt}
    \begin{equation*}
            X = 
            \displaystyle \int_0^\infty 
            \mathbb{E}[x(t)x(t)^\tr ]
            \qquad\text{(CT)}
    \end{equation*}
\columnbreak
    \begin{equation*}
        X = 
                     \sum_{t=0}^\infty \mathbb{E}[x_tx_t^\tr ] \qquad \text{(DT)}
    \end{equation*}
\end{multicols}
\vspace{-4mm}
\noindent for the linear system
\begin{equation*}
    \begin{cases}
    \dot x(t) = Ax(t) &\text{(CT)}\\
        x_{t+1} = Ax_t, &\text{(DT)}
    \end{cases}\qquad\text{with}\qquad
    \mathbb{E}[x_0]=0,\quad \mathbb{E}[x_0x_0^\tr ]=W.
\end{equation*}
The second part characterizes positive definite solutions to \cref{eq:Lyapunov_general}. 
We use this to analyze \cref{assumption:X_positive-definite} for \cref{theorem:gradient_dominance} (see \cref{proposition:X_positive}).
Finally, the third part shows a monotonicity in Lyapunov equations.
This property is a key to developing a convex reformulation in \cref{subsection:convex_reformulation}, in particular \cref{proposition:LQR_KX_ineq}.

\subsection{Proof of \cref{proposition:X_positive}}\label{subsection:proof_X_PD}

First,
suppose that $X_K\succ0$ holds for $K\in\mathbb{R}^{m\times n}$.
Notice that
$(A+BK,W^{1/2})$ is guaranteed to be controllable.
Then, for an eigenvalue $\lambda\in\mathbb{C}$ 
of $(A+BK)^\tr $
with the corresponding unit eigenvector $v$,
pre- and post-multiplying \cref{eq:LQR_unif_Lyapunov} by $v^\her$ and $v$ gives
\begin{equation*}
\begin{cases}
     2\operatorname{Re}(\lambda)v^\her X_K v + v^\her W v=0 & \text{(CT)}\\
     (|\lambda|^2-1 )v^\her X_K v +
     v^\her W v=0. & \text{(DT)}
\end{cases}
\end{equation*}
Now, if $W^{1/2}v=0$, we observe 
$[W^{1/2},(A+BK)W^{1/2},\ldots,(A+BK)^{n-1}W^{1/2}]v=0,$
which contradicts the controllability of $(A+BK,W^{1/2})$.
Hence, 
$W^{1/2}v\neq0$, and thus we have
$v^\her Wv >0$.
Recalling $X_K\succ0$, we get
\begin{equation*}
\begin{cases}
     \operatorname{Re}
     (\lambda)
     = \displaystyle- \frac{v^\her W v}{2v^\her X_K v}<0 & \text{(CT)}\\
     |\lambda|^2 =1\displaystyle- \frac{v^\her W v}{v^\her X_K v}<1 . & \text{(DT)}
\end{cases}
\end{equation*}
Therefore, the matrix $A+BK$ is stable, implying that $K\in\mathcal{K}$.

Next, assume $K\in\mathcal{K}$. 
Then, we have $X_K\succ0$, since
for any nonzero vector $u\in\mathbb{C}^n\setminus\{0\}$, we have
\begin{equation*}
   u^\her  X_K u= 
    \begin{cases}
        \displaystyle \int_0^\infty \|W^{1/2}e^{(A+BK)^\tr t}u\|^2 dt >0& \text{(CT)}\\
         \displaystyle \sum_{t=0}^\infty \left\|W^{1/2}\left((A+BK)^\tr\right)^t u\right\|^2 >0, & \text{(DT)}
    \end{cases}
\end{equation*}
where the functions $\phi_\ct(t):=W^{1/2}e^{(A+BK)^\tr t}u$ and $\phi_\dt(t):=W^{1/2}\left((A+BK)^\tr\right)^t u$
are not uniformly zero by
the controllability of
$(A+BK,W^{1/2})$.
Indeed, for the continuous-time case,
we can compute
\begin{equation*}
 \phi_\ct(0)=W^{1/2}u,\quad
        \phi_\ct^{'}(0) = W^{1/2}(A+BK)^\tr u,\quad\cdots\quad
        \phi^{(n-1)}_\ct(0)
        =W^{1/2}((A+BK)^\tr)^{n-1}u,
\end{equation*}
at least one of which is nonzero.
In the discrete-time case, we have
\begin{equation*}
u^\her X_Ku
\geq \sum_{t=0}^{n-1}
\|\phi_\dt(t)\|^2
= u^\her UU^\tr u^\her >0,
\end{equation*}
where
$U=[W^{1/2},(A+BK)W^{1/2},\ldots,(A+BK)^{n-1}W^{1/2}]$
satisfies $UU^\tr \succ0$.
Hence, we have $u^\her X_Ku>0$ for any $u\neq 0$, which yields $X_K\succ0.$
Hence, we arrive at $K\in\mathcal{K}\Leftrightarrow X_K\succ0$.

\subsection{Proof of \cref{proposition:uniqueness}}\label{subsection:proof_uniqueness}

The first statement is clear from \cref{theorem:LQR_solution_continuous-time,theorem:LQR_solution_discrete-time}
and \cref{example:PL_fail_ct,example:PL_fail_dt}. Since $\mathcal{K}$ is open and $J$ is differentiable, any global minimizer must satisfy $\nabla J(K)=0$.  
We shall prove the uniqueness of the optimal gain by showing that the stationary condition $\nabla J(K)=0$ has a unique solution under \Cref{assumption:X_positive-definite,assumption:stabilizable}.

It is known that
the gradient of the LQR cost at $K\in\mathcal{K}$ is given as follows
\cite{fazel2018global,fatkhullin2021optimizing}:
\begin{align}\label{eq:JK_gradient}
\nabla J(K) =\begin{cases}
     2 (RK+B^\tr P_K)X_K &\text{(CT)}\\
    2\left( (R+B^\tr P_KB)K+B^\tr 
    P_KA\right)X_K,& \text{(DT)}\\
\end{cases}
\end{align}
where $P_K$ solves the following (dual) Lyapunov equation:
\begin{equation}\label{eq:dual_Lyapunov}
    \begin{cases}
        (A+BK)^\tr P_K+P_K(A+BK)+Q+K^\tr RK=0 & 
        \text{(CT)}\\
        (A+BK)^\tr P_K(A+BK)-P+Q+K^\tr RK=0. & \text{(DT)}
    \end{cases}
\end{equation}

We begin with the continuous-time case.
When \cref{assumption:X_positive-definite} holds, we have 
\begin{align*}
    \nabla J(K)=0 \quad \Longleftrightarrow\quad&
     RK+B^\tr P_K = 0\quad
     \Longleftrightarrow\quad
        K=-R^{-1}B^\tr P_K. 
\end{align*}
Then, it is not difficult to see that
 $       (BK)^\tr P_K+P_KBK+K^\tr R K= - P_KBR^{-1}B^\tr P_K.$  
Now, substituting this into
\cref{eq:dual_Lyapunov} yields
\begin{align*}
        A^\tr P_K+P_KA+Q- P_KBR^{-1}B^\tr P_K=0.
\end{align*}
Therefore, by inspecting the ARE in \cref{theorem:LQR_solution_continuous-time}
and by recalling the uniqueness of the solution $P_K$ to \cref{eq:dual_Lyapunov} and that of $P^\star$ to the ARE, 
we observe $P_K = P^\star$, and
        $K=-R^{-1}B^\tr P^\star$
is indeed the unique solution to the LQR problem \cref{eq:LQR_unif}
for the continuous-time case.

Finally, the discrete-time case follows from the same argument.
Under \cref{assumption:X_positive-definite},
\begin{align*}
    \nabla J(K)=0 \quad \Longleftrightarrow\quad&
        (R+B^\tr P_KB)K+B^\tr P_KA=0
     \quad\Longleftrightarrow\quad 
        K=-(R+B^\tr P_KB)^{-1}B^\tr P_KA,
\end{align*}
which yields
\begin{align*}
        (BK)^\tr P_KA+A^\tr P_K BK +K^\tr B^\tr P_KBK+ K^\tr RK
        = 
        - A^\tr P_KB(R+B^\tr P_KB)^{-1}B^\tr P_K A.
\end{align*}
Now, applying this to
\cref{eq:dual_Lyapunov}, we have
\begin{align*}
        A^\tr P_K A- P_K+Q- A^\tr P_KB(R+B^\tr P_KB)^{-1}B^\tr P_KA=0.
\end{align*}
Hence, recalling the DARE in \cref{theorem:LQR_solution_discrete-time}
and the uniqueness of the solution $P_K$ to \cref{eq:dual_Lyapunov}, 
we confirm $P_K = P^\star$ and
$K=-(R+B^\tr P^\star B)^{-1}B^\tr P^\star A$,
which is the unique stationary and optimal LQR~gain.

%% file: sections/appendix-proofs.tex
\section{Technical proofs in \cref{section:proof_PL,section:proof_partialmin_chain-rule}}\label{appendix:proof}

\subsection{Proof of \Cref{proposition:LQR_KX_ineq}} \label{appendix:proof-LQR-KX-ineq}

By \Cref{assumption:X_positive-definite}, we can directly replace the constraint $K \in \mathcal{K}$ with $X \succ 0$ in \cref{eq:LQR_unif_Lyapunov} so that the optimal solution and optimal value remain the same. 

It suffices to prove that relaxing the Lyapunov equation in \cref{eq:LQR_unif_Lyapunov} into the Lyapunov inequality in \cref{eq:LQR_unif_2-lyapunov} does not affect the optimal gain $K^\star$ and the optimal value. This indeed follows from a standard comparison result in Lyapunov equations \cite{lancaster1995algebraic,sun2021learning}. 
As shown in \cref{proposition:Lyapunov_general},
the following fact holds for both continuous-time and discrete-time settings: 
fix $K\in\mathcal{K}$, and let $W_1,W_2$ satisfying $W_1\succeq W_2\succeq0$, then 
the unique solutions $X_i\,(i=1,2)$ to the Lyapunov equations
\begin{align*}
     \Psi
   \left(
   \begin{bmatrix}
       A+BK\\
       I
   \end{bmatrix}X_i
   \begin{bmatrix}
       A+BK\\
       I
   \end{bmatrix}^\tr
   \right) +W_i=0,\quad i=1,2
\end{align*}
satisfy $X_1\succeq X_2\succeq 0$.
By this property, it is not difficult to verify that for any feasible $(K, X)$ in \cref{eq:LQR_unif_2-lyapunov}, we have
$
    X\succeq X_K, 
$ 
where $X_K$ is the unique Lyapunov solution to \cref{eq:LQR_unif_Lyapunov} for the same $K$.
Thus, we have 
\begin{equation} \label{eq:LQR-two-upper-bound}
     \langle Q+K^\tr RK,X\rangle\geq \langle Q+K^\tr RK,X_K\rangle
\end{equation}
for any feasible  $(K, X)$ in \cref{eq:LQR_unif_2-lyapunov}. 

Now, suppose $(K^\star,X^\star)$ is an optimal solution to \cref{eq:LQR_unif}. This pair $(K^\star,X^\star)$ is also feasible to \cref{eq:LQR_unif_2}, and it must also be an optimal solution to \cref{eq:LQR_unif_2} by the inequality \Cref{eq:LQR-two-upper-bound}. Problems \cref{eq:LQR_unif_2,eq:LQR_unif} have the same optimal value in this case. 
On the other hand, suppose $(K^\star,X^\star)$ is an optimal solution to \cref{eq:LQR_unif_2}. This gain $K^\star$ must be stabilizing, and optimal to \cref{eq:LQR_unif} (otherwise, there exists another gain $K$ that achieve a lower value in \cref{eq:LQR_unif}, and this gain $K$ together with $X_K$ would achieve a lower value in \cref{eq:LQR_unif_2}, contradicting the optimality of $(K^\star,X^\star)$). 
We next show that the optimal value between \cref{eq:LQR_unif_2,eq:LQR_unif} is the same. We construct another unique $X^\star_K \succ 0$ from the Lyapunov equation \cref{eq:LQR_unif_Lyapunov} by \Cref{assumption:X_positive-definite}. This pair $(K^\star,X^\star_K)$ is also feasible to \cref{eq:LQR_unif_2}, and we thus have 
$$
\langle Q+(K^\star)^\tr RK^\star,X^\star\rangle\leq \langle Q+(K^\star)^\tr RK^\star,X^\star_K\rangle.
$$
Together with \Cref{eq:LQR-two-upper-bound}, we must have $\langle Q+(K^\star)^\tr RK^\star,X^\star\rangle = \langle Q+(K^\star)^\tr RK^\star,X^\star_K\rangle$.

\subsection{Convexity of 
the problem \cref{eq:LQR_unif_convex} in discrete-time}\label{subsec-proof:convex_derivation}

We first derive another convex reformulation 
for \cref{eq:LQR_unif_2} in discrete-time
with an additional variable $Z$, and derive \cref{eq:LQR_unif_convex} by partially minimizing it with respect to $Z$.
Since partial minimization preserves convexity (see \cite[Prop. 2.22]{rockafellar1998variational}),
we can conclude that \cref{eq:LQR_unif_convex} is indeed a convex program.

For 
$(K,X)$ feasible to
\cref{eq:LQR_unif_2},
we introduce the following variables $Y$ and $Z$:
\begin{align}\label{eq:convex_Z}
    Y=&(K-K^\star)X, \nonumber \\
    Z\succeq &(Y+K^\star X) X^{-1}(Y+K^\star X)^\tr
    \quad(=KXK^\tr)
    .
\end{align}
Then, we can reformulate \cref{eq:LQR_unif_2} with $\Psi=\Psi_{\dt}$ as
\begin{equation}\label{eq:convex_YXZ-1}
    \begin{aligned}
        \min_{Y,\,X,\,Z}
    &\quad 
    \langle Q ,X\rangle
    +
    \langle R ,Z\rangle
    \\
    \text{subject to}&\quad
    X\succ 0,
    \quad \cref{eq:convex_Z},
    \quad \Lambda(Y,X)\preceq 0
    \end{aligned}
\end{equation}
with
    \begin{align*}
    \Lambda(Y,X)
    =& \Psi_{\dt}
    \left(
    \begin{bmatrix}
        AY+BK^\star\\
        I
    \end{bmatrix}
    X^{-1}
    \begin{bmatrix}
        AY+BK^\star \\
        I
    \end{bmatrix}^\tr
    \right)
    +W\\
    =&
    (A^\star X+BY)X^{-1}(A^\star X+BY)^\tr-
    X +W,
\end{align*}
where $A^\star = A+BK^\star$.
Note that when $ Z = 
    (Y+K^\star X) X^{-1}(Y+K^\star X)^\tr,$
this problem is identical to \cref{eq:LQR_unif_convex}.
Then, since the Schur complement gives the following LMIs
\begin{align*}
\Lambda(Y,X)\preceq 0
\quad\Longleftrightarrow&\quad
\begin{bmatrix}
    X-W & (A^\star X+BY)\\
    (A^\star X+BY)^\tr & X
\end{bmatrix}\succeq 0\\
\cref{eq:convex_Z}\quad \Longleftrightarrow&\quad \begin{bmatrix}
    Z & (Y+K^\star X)\\
    (Y+K^\star X)^\tr &X
\end{bmatrix}\succeq 0,
\end{align*}
and $Z = 
    (Y+K^\star X) X^{-1}(Y+K^\star X)^\tr$
constitutes a partial minimizer for \cref{eq:convex_YXZ-1} w.r.t. $Z$,
we conclude that 
both \cref{eq:LQR_unif_convex-cost} and \cref{eq:LQR_unif_convex_Lyapunov} are convex.

\subsection{Computational details of the chain rule }\label{subsec-proof:chain-rule}

\subsubsection{Derivation of \cref{eq:chain-rule_vec} via vectorization}\label{subsubsec-proof:chain-rule}
Recall that
$\Pi(Y,X) = 
    (YX^{-1}+K^\star,X)$.
As stated in the proof of \cref{proposition:fcvx_subgradient_J}, it holds 
for any feasible $(Y,X)\in\mathcal{F}_\mathrm{cvx}$ that
\begin{equation}\label{eq:fcvx_composite}
    \tilde{f}_\mathrm{cvx}(Y,X)
    =
    (J_\mathrm{lft}\circ\Pi)(Y,X).
\end{equation}
Now, 
to use the chain rule,
define
 the following vector-valued functions and mapping:
\begin{align*}
\hat{f}_\mathrm{cvx}\left(\begin{bmatrix}
         y \\
         x
\end{bmatrix}\right)=&\tilde{f}_\mathrm{cvx}\left(
    \operatorname{mat}(y),
    \operatorname{mat}(x)\right),\,\\
    \hat{J}_\mathrm{lft}\left(\begin{bmatrix}
        k\\
        x
    \end{bmatrix}\right) = &
    J_\mathrm{lft} 
    \left(
    \operatorname{mat}(k),
    \operatorname{mat}(x)
\right),\quad
\hat{\Pi}\left(\begin{bmatrix}
         y \\
         x
\end{bmatrix}\right)=
\begin{bmatrix}
   \operatorname{vec}\left(\operatorname{mat}(y)\operatorname{mat}(x)^{-1}+K^\star\right)\\
    x
\end{bmatrix},
\end{align*}
where $\operatorname{vec}^{-1}(\cdot)=\operatorname{mat}(\cdot).$
Then, we obtain
\begin{equation*}
     \hat{f}_\mathrm{cvx}
     \left(\begin{bmatrix}
         \operatorname{vec}(Y) \\
         \operatorname{vec}(X)
     \end{bmatrix}\right)=
     (
    \hat{J}_\mathrm{lft}
    \circ
    \hat{\Pi}
    )
    \left(
    \begin{bmatrix}
        \operatorname{vec}(Y)\\
        \operatorname{vec}(X)
    \end{bmatrix}
    \right)
\end{equation*}
by
\begin{align*}
    \tilde{f}_\mathrm{cvx}(Y,X)
    =
     \hat{f}_\mathrm{cvx}
     \left(\begin{bmatrix}
         \operatorname{vec}(Y) \\
         \operatorname{vec}(X)
     \end{bmatrix}\right)
\overset{\cref{eq:fcvx_composite}}{=}
     &(J_\mathrm{lft} \circ
    \Pi)(Y,X)
    =
    (
    \hat{J}_\mathrm{lft}
    \circ
    \hat{\Pi}
    )
    \left(
    \begin{bmatrix}
        \operatorname{vec}(Y)\\
        \operatorname{vec}(X)
    \end{bmatrix}
    \right).
\end{align*}

We compute \cref{eq:chain-rule_vec} using this vectorized form.
Note that
$\hat{\Pi}$
are continuously differentiable on $\operatorname{vec}(\mathbb{R}^{m\times n})\times \operatorname{vec}(\mathbb{S}_{++}^n)$,
and
$J_\mathrm{lft}$ is lower semi-continuous at $\Pi
        (Y,X)$ for any  $(Y,X) \in \mathcal{F}_\mathrm{cvx}$.
By the chain rule \cite[Th. 10.6]{rockafellar1998variational} 
for $\hat{f}_\mathrm{cvx}=\hat{J}_\mathrm{lft}\circ\hat{\Pi}$,
we have
\begin{equation}\label{eq:chain-rule_proof-1}
\Gamma(Y,X)^\tr 
\hat{\partial}
\hat{J}_\mathrm{lft}
\left( \hat{\Pi}\left(
    \begin{bmatrix}
        \operatorname{vec}(Y)\\
        \operatorname{vec}(X)
    \end{bmatrix}
    \right) \right)
    \subset \partial \hat{f}_\mathrm{cvx}
    \left(
    \begin{bmatrix}
       \operatorname{vec}(Y)\\
        \operatorname{vec}(X)
    \end{bmatrix}
    \right),\qquad
    \Gamma(Y,X)
    =
    \nabla \hat{\Pi}\left(
    \begin{bmatrix}
        \operatorname{vec}(Y)\\
        \operatorname{vec}(X)
    \end{bmatrix}
    \right)
\end{equation}
for $(Y,X)\in\mathcal{F}_\mathrm{cvx}$.
We present the explicit form of $\Gamma(Y,X)$ in \cref{proposition:Jacobian}.
Since $\operatorname{vec}(\cdot)$ only rearranges matrices to be vectors,
it follows 
that
\begin{equation}\label{eq:chain-rule_proof-2}
\partial \tilde{f}_\mathrm{cvx}\left(
Y,X
    \right) = 
    \left\{
    (H_1,
        H_2)
    \middle|
        \begin{bmatrix}
        \operatorname{vec}(H_1)\\
        \operatorname{vec}(H_2)
    \end{bmatrix}
        \in
    \partial \hat{f}_\mathrm{cvx}\left(
    \begin{bmatrix}
        \operatorname{vec}(Y)\\
        \operatorname{vec}(X)
    \end{bmatrix}
    \right)
    \right\}.
\end{equation}
Similar to this, we have
\begin{equation}\label{eq:chain-rule_proof-3}
    \hat{\partial}
    {J}_\mathrm{lft}
\left( \Pi(Y,X)
    \right)
    = 
    \left\{
    (\Xi_1,\Xi_2)
    \middle|
        \begin{bmatrix}
        \operatorname{vec}(\Xi_1)\\
        \operatorname{vec}(\Xi_2)
    \end{bmatrix}
        \in
     \hat{\partial} \hat{J}_\mathrm{lft}\left(
     \hat{\Pi}\left(
    \begin{bmatrix}
        \operatorname{vec}(Y)\\
        \operatorname{vec}(X)
    \end{bmatrix}
    \right) 
     \right)
    \right\}.
\end{equation}
Combining \cref{eq:chain-rule_proof-1}, \cref{eq:chain-rule_proof-2}, and \cref{eq:chain-rule_proof-3},
we can rewrite \cref{eq:chain-rule_proof-1} into \cref{eq:chain-rule_vec}.

\subsubsection{Proof of \cref{proposition:Jacobian}}\label{subsubsec-proof-Jacobian}

Here, we compute the following Jacobian explicitly:
\begin{equation*}
    \nabla \hat{\Pi}\left(
    \begin{bmatrix}
        \operatorname{vec}(Y)\\\operatorname{vec}(X)
    \end{bmatrix}
    \right)
    = \begin{bmatrix}
    X^{-1}\otimes I_m
    &
    -(X^{-1}\otimes YX^{-1})\\
    0& I_{n^2}
    \end{bmatrix}. 
\end{equation*}
First, it is easy to compute the lower two blocks as
\begin{align*}
      \frac{\partial }{\partial \operatorname{vec}^\tr(Y) }
          \operatorname{vec}(X)=0,\qquad
         \frac{\partial }{\partial \operatorname{vec}^\tr(X) }
         \operatorname{vec}(X)
         = I_{n^2}.
\end{align*}
Next,
for the upper left block, using the formula $(S_2^\tr\otimes S_1)\operatorname{vec}(V)
=\operatorname{vec}(S_1VS_2)$ \cite[Prop. 5.2.1]{lancaster1995algebraic} gives
\begin{align*}
    \operatorname{vec}(YX^{-1}+K^\star)
    =&
    \operatorname{vec}(YX^{-1}) + \text{const.}\\
    =&( X^{-1}\otimes I_m)\operatorname{vec}(Y)+\text{const.}
    \\
    =&(I_n\otimes Y)\operatorname{vec}(X^{-1})+\text{const.},
\end{align*}
which yields
\begin{align*}
    \frac{\partial }{\partial \operatorname{vec}^\tr(Y) }
         \operatorname{vec}
         (YX^{-1}+K^\star)
         =& X^{-1}\otimes I_m.
\end{align*}
Finally, we can compute the upper right block as
\begin{align*}
         \frac{\partial }{\partial \operatorname{vec}^\tr(X) }
         \operatorname{vec}
         (YX^{-1}+K^\star)=& 
         - (I_n\otimes Y)( X^{-1}\otimes X^{-1})
         =-(X^{-1}\otimes YX^{-1}),
\end{align*}
where we have used
\begin{equation}\label{eq:Xinv_partial}
    \frac{\partial \operatorname{vec}(X^{-1}) }{\partial \operatorname{vec}^\tr (X) }
    =-( X^{-1}\otimes X^{-1}).
\end{equation}
Note that \cref{eq:Xinv_partial} follows from
\begin{align*}
    &(X+dX)^{-1}
    =
    X^{-1}(I+dX\,X^{-1})^{-1}
    =X^{-1} -X^{-1}\,dX\,X^{-1} + O(\|dX\|_F^2) \\
    \Longrightarrow\quad
&    \operatorname{vec}\left((X+dX)^{-1}\right)
    =\operatorname{vec}(X^{-1})
    - (X^{-1}\otimes X^{-1})\operatorname{vec}(dX)
    + O(\|dX\|_F^2).
\end{align*}

\subsection{Algebraic proof without 
chain rules for \cref{proposition:fcvx_subgradient_J}}\label{subsection:proof-2-fcvx_subgradient}

We provide an alternative proof for \Cref{proposition:fcvx_subgradient_J}
without using the chain rule.
We use some of the notations introduced in \cref{section:proof_partialmin_chain-rule}.

Recall 
that $J_\mathrm{lft}(K,X):= \langle Q+K^\tr RK,X \rangle + \delta_{\mathcal{L}_\mathrm{lft}}(K,X)$
with the set $\mathcal{L}_\mathrm{lft}$ of all feasible points of \cref{eq:LQR_unif_2},
and we have
\begin{equation*}
    J_\mathrm{lft}(K,X)
    =
    (\tilde{f}_\mathrm{cvx}\circ\Upsilon)
    (K,X)
\end{equation*}
for any $(K,X)\in\mathbb{R}^{m\times n}\times \mathbb{S}_{++}^n$.
Moreover, recalling 
Fre\'chet subgradients
(\Cref{definition:Frechet_subgradient}) and
\cref{proposition:subgradient_Jlft},
we know $(\nabla J(K),0)\in\hat{\partial}J_\mathrm{lft}(K,X_K)$, which follows from
\cref{eq:Frechet_Jlft}:
\begin{equation}
  J_\mathrm{lft}(\hat{K},\hat X)
        \geq
        J_\mathrm{lft}(K,X_K)
        + \langle 
        \nabla J(K),\hat K-K
        \rangle
        +
        \langle 
        0,\hat X-X_K
        \rangle
        +o(\|\hat K-K\|_F)
    \tag{\cref{eq:Frechet_Jlft} restated}
\end{equation}
for any $(\hat{K},\hat{X})$ near $(K,X_K)$.

    Using this, we establish \cref{proposition:fcvx_subgradient_J} as follows.
    Let $(\hat Y,\hat{X})\in \mathbb{R}^{m\times n}\times \mathbb{S}_{++}^n$.
    For $(K,\hat{X})=\Upsilon^{-1}(\hat{Y},\hat{X})$,
    we have
    \begin{equation*}
        \hat{Y}=(\hat {K}-K^\star)\hat{X},\qquad
        \hat{K}=\hat{Y}\hat{X}^{-1}+K^\star.
    \end{equation*}
    Then, with $Y_K=(K-K^\star )X_K$,
    it is not difficult to see that
    \begin{equation}\label{eq:JK_inprod_proof}
    \begin{aligned}
        &\langle 
        \nabla J(K),\hat K-K
        \rangle\\
        =&
         \Tr 
        \left(
        (\nabla J(K)X_K^{-1})^\tr (\hat K-K)X_K
        \right) \\
        =& 
        \Tr \left(
        (\nabla J(K)X_K^{-1})^\tr  \left((\hat K-K^\star)-(K-K^\star)\right)(\hat X+X_K-\hat{X})
        \right) \\
        \overset{(a)}{=}& 
    \left\langle 
        \nabla J(K)X_K^{-1},
        \hat{Y}-Y_K\right\rangle
        +
        \Tr\left(
        (\nabla J(K)X_K^{-1})^\tr
        (\hat{K}-K^\star) (X_K-\hat{X})
        \right)\\
        =& \left\langle 
        \nabla J(K)X_K^{-1},
        \hat{Y}-Y_K
        \right\rangle+
        \Tr\left( 
        (\nabla J(K)X_K^{-1})^\tr
        \left(K-K^\star+\hat{K}-K\right) (X_K-\hat{X})
        \right)\\
        =& \left\langle 
        \nabla J(K)X_K^{-1},
        \hat{Y}-Y_K\right\rangle+
        \left\langle 
         (K^\star-K)^\tr \nabla J(K)X_K^{-1},\hat{X}-X_K
        \right\rangle
        +O\left(\|\hat K-K\|_F\|\hat X-X_K\|_F\right),
    \end{aligned}
    \end{equation}
    where
    (a) follows from $
        \hat{Y}-Y_K
        =(\hat K-K^\star)\hat{X}-(K-K^\star)X_K$.
    Now, $O(\|\hat K-K\|_F)$ is of the same order as
    $O\left(
        \left\|\begin{bmatrix}
            \hat  Y- Y_K\\
            \hat  X- X_K
        \end{bmatrix}\right\|_F
        \right)$,
        as
    \begin{align*}
        \hat K-K
        =&\hat Y\hat X^{-1}-Y_KX_K^{-1}\\
        =& (\hat Y-Y_K)X_K^{-1}
        +\hat{Y}X_K^{-1}(X_K-\hat{X})X_K^{-1}
        + O(\|\hat X-X_K\|_F^2),
    \end{align*}
    where we have used
    the following transformation by the Neumann series:
    \begin{align*}
        \hat X^{-1}
        = X_K^{-1}\left(
        I-(X_K-\hat{X})X_K^{-1}
        \right)^{-1} 
        =X_K^{-1}
        + X_K^{-1}(X_K-\hat{X})X_K^{-1}+O(\|\hat X-X_K\|_F^2).
    \end{align*}
    Thus, plugging
    in
    \cref{eq:JK_inprod_proof},
$\tilde{f}_\mathrm{cvx}(\hat{Y},\hat{X})=J_\mathrm{lft}(\hat{K},\hat{X})$,
    and $J(K)=\tilde{f}_\mathrm{cvx}(Y_K,X_K)$ 
    for 
\cref{eq:Frechet_Jlft}
    yields
    \begin{align*}
        &\tilde{f}_\mathrm{cvx}(\hat Y,\hat{X})
        \geq
        \tilde{f}_\mathrm{cvx}(Y_K,X_K)\\
        &
       + \left\langle 
        \nabla J(K)X_K^{-1},
        \hat{Y}-Y_K\right\rangle+
        \left\langle 
         (K^\star-K)^\tr \nabla J(K)X_K^{-1},\hat{X}-X_K
        \right\rangle
        +
        o\left(
        \left\|\begin{bmatrix}
            \hat  Y- Y_K\\
            \hat  X- X_K
        \end{bmatrix}\right\|_F
        \right)
    \end{align*}
    for any $(\hat Y,\hat X)$ near $(Y_K,X_K)$.
    Notice that this inequality still holds 
    even for $\hat X\in \mathbb{R}^{n\times n}\setminus\mathbb{S}_{++}^n$,
    since we know
    $\tilde{f}_\mathrm{cvx}(\hat Y,\hat{X})=\infty$ for this $\hat X$.
    Therefore, \Cref{definition:Frechet_subgradient}
    yields \cref{eq:gradJ_fcvx}, i.e.,
    \begin{equation*}
        (\nabla J(K)X_K^{-1},
         (K^\star-K)^\tr \nabla J(K)X_K^{-1})
         \in \partial \tilde{f}_\mathrm{cvx}(Y_K,X_K),
    \end{equation*}
    which implies \cref{proposition:fcvx_subgradient_J}.